\numberwithin{equation}{section}
\numberwithin{figure}{section}
\theoremstyle{plain}
\newtheorem{thm}{\protect\theoremname}[section]
  \theoremstyle{remark}
  \newtheorem{rem}[thm]{\protect\remarkname}
  \theoremstyle{remark}
  \newtheorem*{rem*}{\protect\remarkname}
  \theoremstyle{plain}
  \newtheorem{cor}[thm]{\protect\corollaryname}
  \theoremstyle{plain}
  \newtheorem{fact}[thm]{\protect\factname}
  \theoremstyle{definition}
  \newtheorem{defn}[thm]{\protect\definitionname}
  \theoremstyle{remark}
  \newtheorem*{notation*}{\protect\notationname}
  \theoremstyle{plain}
  \newtheorem{lem}[thm]{\protect\lemmaname}
  \theoremstyle{remark}
  \newtheorem{notation}[thm]{\protect\notationname}
  \theoremstyle{plain}
  \newtheorem{prop}[thm]{\protect\propositionname}
\newcommand{\e}{\mathrm e}
\newcommand{\1}{1}
\newcommand{\N}{\mathbb{N}}
\newcommand{\F}{F}
\newcommand{\Z}{\mathbb{Z}}
\newcommand{\R}{\mathbb{R}}
\renewcommand{\Pi}{\pi}
\renewcommand{\emptyset}{\varnothing}
\renewcommand{\hat}{\widehat}
\DeclareMathOperator*{\Int}{Int}
\newcommand\id{\mathrm{id}}
\DeclareMathOperator*{\card}{card}
\DeclareMathOperator*{\Con}{Con}
\newcommand{\NwithoutId}{N \setminus \{ \id \}}
  \providecommand{\corollaryname}{Corollary}
  \providecommand{\definitionname}{Definition}
  \providecommand{\factname}{Fact}
  \providecommand{\lemmaname}{Lemma}
  \providecommand{\notationname}{Notation}
  \providecommand{\propositionname}{Proposition}
  \providecommand{\remarkname}{Remark}
\providecommand{\theoremname}{Theorem}
\begin{document}

\title{Conformal Fractals for  Normal Subgroups of Free Groups}

\author{Johannes Jaerisch}

\thanks{The author was supported by the research fellowship JA 2145/1-1 of
the German Research Foundation (DFG)}

\address{Department of Mathematics, Graduate School of Science, Osaka University,
1-1 Machikaneyama, Toyonaka, Osaka, 560-0043 Japan }

\email{jaerisch@cr.math.sci.osaka-u.ac.jp}

\subjclass[2000]{Primary 37C45, 30F40 ; Secondary 37C85, 43A07}

\keywords{Kleinian groups, exponent of convergence, normal subgroups, amenability,
conformal graph directed Markov systems}
\begin{abstract}
We investigate subsets of a multifractal decomposition of the limit
set of a conformal graph directed Markov system, which is constructed
from the Cayley graph of a free group with at least two generators.
The subsets we consider are parametrised by a normal subgroup $N$
of the free group and mimic the radial limit set of a Kleinian group.
Our main results show that, regarding the Hausdorff dimension of these
sets, various results for Kleinian groups can be generalised. Namely,
under certain natural symmetry assumptions on the  multifractal decomposition,
we prove that, for a subset parametrised by $N$, the Hausdorff dimension
is maximal if and only if $\F_{d}/N$ is amenable and that the dimension
is greater than half of the maximal value. We also give a criterion
for amenability via  the divergence of the Poincar\'{e} series of
$N$. Our results are applied to the Lyapunov spectrum for normal
subgroups of Kleinian groups of Schottky type. 
\end{abstract}
\maketitle

\section{Introduction and Statement of Results}

In this paper we investigate the Hausdorff dimension of a class of
conformal fractals associated to normal subgroups of free groups.
Our main results show that these fractals share many interesting properties
with the radial (or conical) limit set of a Kleinian group. To define
the sets we consider, let $\F_{d}$ denote the free group generated
by $I:=\left\{ g_{1},g_{1}^{-1},\dots,g_{d},g_{d}^{-1}\right\} $
with $d\ge2$. The set of infinite reduced paths starting from the
identity in the Cayley graph of $\F_{d}$ with respect to $I$ is
given by $\Sigma:=\left\{ \tau=\left(\tau_{i}\right)\in I^{\N}:\tau_{i}\neq\tau_{i+1}^{-1}\right\} $.
Let $\Phi$ be a \emph{conformal graph directed Markov system $\Phi$
associated to $\F_{d}=\left\langle g_{1},\dots,g_{d}\right\rangle $}
with coding map $\pi_{\Phi}:\Sigma\rightarrow\R^{D}$, $D\ge1$ (see
Definition \ref{def:gdms-associated-to-freegroup-and-radiallimitsets}).
Such a system $\Phi$ consists of a set of contracting conformal maps
on $\R^{D}$, and each \emph{limit point} $\pi_{\Phi}\left(\tau\right)$
of $\Phi$ is obtained by successively applying maps from $\Phi$
according to the infinite path $\tau\in\Sigma$. We will define subsets
of the limit set $\pi_{\Phi}\left(\Sigma\right)$ as follows. For
a normal subgroup $N$ of $\F_{d}$, the \emph{symbolic radial limit
set }$\Lambda_{\mathrm{r}}\left(N\right)$ of $N$ consists of those
paths in $\Sigma$, for which the projection to the quotient graph
$\F_{d}/N$ visits some vertex infinitely often, that is, 
\[
\Lambda_{\mathrm{r}}\left(N\right):=\left\{ \tau\in\Sigma:\exists h\in\F_{d},\mbox{ such that }\tau_{1}\cdot\dots\cdot\tau_{n}\in hN\mbox{ for infinitely many }n\in\N\right\} .
\]
We will investigate the sets $\pi_{\Phi}\left(\Lambda_{\mathrm{r}}\left(N\right)\right)$
and $\pi_{\Phi}\left(\Lambda_{\mathrm{r}}\left(N\right)\cap\mathcal{F}\left(\alpha,\Phi,\psi\right)\right)$
in Theorem \ref{thm:maintheorem-limitset} and \ref{thm:maintheorem},
where the multifractal level sets \foreignlanguage{english}{$\mathcal{F}\left(\alpha,\Phi,\psi\right)$
are for} $\alpha\in\R$ given by 
\[
\mathcal{F}\left(\alpha,\Phi,\psi\right):=\left\{ \tau\in\Sigma:\lim_{n\rightarrow\infty}\frac{\sum_{i=0}^{n-1}\psi\left(\sigma^{i}\left(\tau\right)\right)}{\sum_{i=0}^{n-1}\zeta\left(\sigma^{i}\left(\tau\right)\right)}=\alpha\right\} ,
\]
the potential $\psi:\Sigma\rightarrow\R$ is H\"older continuous,
$\zeta:\Sigma\rightarrow\R$ is the geometric potential of $\Phi$
and $\sigma:\Sigma\rightarrow\Sigma$ refers to the left shift map.
For the definition of the symbolic uniformly radial limit set $\Lambda_{\mathrm{ur}}\left(N\right)$
we refer to Definition \ref{def:gdms-associated-to-freegroup-and-radiallimitsets}.
We refer to Section \ref{sec:Thermodynamic-Formalism-for} and \ref{sec:Graph-Directed-Markov}
for an introduction to symbolic thermodynamic formalism and graph
directed Markov systems.

To state our first main result, we have to make further definitions.
For each $n\in\N$, the set of admissible words of length $n$ is
given by $\Sigma^{n}:=\left\{ \omega\in I^{n}:\omega_{i}\neq\omega_{i+1}^{-1},1\le i\le n-1\right\} $.
For $\omega\in\Sigma^{n}$ and a function $f:\Sigma\rightarrow\R$
we define \foreignlanguage{english}{
\[
S_{\omega}f:=\sup_{\tau\in\left[\omega\right]}\sum_{i=0}^{n-1}f\left(\sigma^{i}\left(\tau\right)\right),\quad\mbox{where }\left[\omega\right]:=\left\{ \tau\in\Sigma:\tau_{1}=\omega_{1},\dots,\tau_{n}=\omega_{n}\right\} .
\]
For convenience, we denote by $\emptyset$ the unique word of length
zero and we set $S_{\emptyset}f:=0$. We let $\Sigma^{*}:=\bigcup_{n\in\N}\Sigma^{n}$
and we identify $\Sigma^{*}\cup\left\{ \emptyset\right\} $ with $\F_{d}$.
Let $N$ denote a normal subgroup of $\F_{d}$. The} \emph{Poincar\'{e}
series} \emph{of $\left(N,\Phi\right)$} and the \emph{exponent of
convergence} \emph{of $\left(N,\Phi\right)$} are for $u\in\R$ given
by 
\[
P_{N}\left(u,\Phi\right):=\sum_{\omega\in N}\e^{uS_{\omega}\zeta}\quad\mbox{and}\quad\delta_{N}:=\inf\left\{ u\in\R:P_{N}\left(u,\Phi\right)<\infty\right\} .
\]
More generally, for a H\"older continuous potential $\psi:\Sigma\rightarrow\R$,
we introduce the \emph{free energy function }of $\left(N,\Phi,\psi\right)$
which is for $\beta\in\R$ given by 
\[
t_{N}:\R\rightarrow\R,\quad t_{N}\left(\beta\right):=\inf\left\{ u\in\R:\sum_{\omega\in N}\e^{\beta S_{\omega}\psi+uS_{\omega}\zeta}<\infty\right\} .
\]
Let us also set $\delta:=\delta_{\F_{d}}$ and $t:=t_{\F_{d}}$. For
$\beta\in\R$, we say that $\left(N,\Phi,\psi\right)$ is of \emph{divergence
type in $\beta$} if $\sum_{\omega\in N}\e^{\beta S_{\omega}\psi+t_{N}\left(\beta\right)S_{\omega}\zeta}=\infty$.
We say that $\left(N,\Phi\right)$ is of \emph{divergence type} if
$\left(N,\Phi,0\right)$ is of divergence type in $0$, that is $P_{N}\left(\delta_{N},\Phi\right)=\infty$. 

We need the following notions of symmetry (cf. Definition \ref{def-asympt-symmetric}).
\foreignlanguage{english}{For each $n\in\N$ and $\omega\in\Sigma^{n}$}
we set \foreignlanguage{english}{$\left|\omega\right|:=n$ and} \foreignlanguage{english}{$\omega^{-1}:=\left(\omega_{n}^{-1},\dots,\omega_{1}^{-1}\right)$.
}We say that $\left(N,\Phi,\psi\right)$ is \emph{asymptotically symmetric},
if for all $\beta,u\in\R$ there exist $n_{0}\in\N$ and sequences
$\left(c_{n}\right)\in\left(\R^{+}\right)^{\N}$ and $\left(N_{n}\right)\in\N^{\N}$
with $\lim_{n}\left(c_{n}\right)^{1/n}=1$ and $\lim_{n}n^{-1}N_{n}=0$,
such that for each $g\in\F_{d}$ and for all $n\ge n_{0}$, 
\[
\sum_{\omega\in Ng:\left|\omega\right|=n}\e^{\beta S_{\omega}\psi+uS_{\omega}\zeta}\le c_{n}\sum_{\omega\in Ng^{-1}:n-N_{n}\le\left|\omega\right|\le n+N_{n}}\e^{\beta S_{\omega}\psi+uS_{\omega}\zeta}.
\]
If $\left(c_{n}\right)$ can be chosen to be bounded, for all $\beta,u\in\R$,
then $\left(N,\Phi,\psi\right)$ is called \emph{symmetric.} 

For $\beta\in\R$, we say that $\left(N,\Phi,\psi\right)$ is \emph{symmetric
on average in} $\beta$ if 
\[
\sup_{g\in\F_{d}}\limsup_{n\rightarrow\infty}\frac{\sum_{k=1}^{n}\sum_{\omega\in Ng:\left|\omega\right|=kp}\e^{\beta S_{\omega}\psi+t_{N}\left(\beta\right)S_{\omega}\zeta}}{\sum_{k=1}^{n}\sum_{\omega\in Ng^{-1}:\left|\omega\right|=kp}\e^{\beta S_{\omega}\psi+t_{N}\left(\beta\right)S_{\omega}\zeta}}<\infty,
\]
where $p:=\gcd\left\{ n\in\N:\exists\omega\in\Sigma^{n}\cap N\mbox{ such that }\omega_{n}\omega_{1}\neq\id\right\} $. 

We say that $\left(N,\Phi\right)$ is \emph{(asymptotically) symmetric}
if $\left(N,\Phi,0\right)$ is (asymptotically) symmetric, and $\left(N,\Phi\right)$
is \emph{symmetric on average }if $\left(N,\Phi,0\right)$ is symmetric
on average in $0$. 
\begin{thm}
\label{thm:maintheorem-limitset}Let $\Phi$ denote a conformal graph
directed Markov system associated to $\F_{d}$ with $d\ge2$. Then
the following holds for each non-trivial normal subgroup \emph{$N$
of $\F_{d}$. }
\begin{enumerate}
\item \label{enu:maintheorem-limitset-0}We have 
\begin{align*}
\dim_{H}\left(\pi_{\Phi}\left(\Lambda_{\mathrm{ur}}\left(N\right)\right)\right) & =\dim_{H}\left(\pi_{\Phi}\left(\Lambda_{\mathrm{r}}\left(N\right)\right)\right)=\delta_{N}>0.
\end{align*}

\item \label{enu:maintheorem-limitset-nonamenable-implies-gap-1}If $\F_{d}/N$
is non-amenable, then 
\[
\dim_{H}\left(\pi_{\Phi}\left(\Lambda_{\mathrm{r}}\left(N\right)\right)\right)<\dim_{H}\left(\pi_{\Phi}\left(\Lambda_{\mathrm{r}}\left(F_{d}\right)\right)\right).
\]

\item Suppose that $\left(N,\Phi\right)$ is asymptotically symmetric. \label{enu:maintheorem-limitset-2}

\begin{enumerate}
\item \label{enu:maintheorem-limitset-amenablesymmetric-fulldimension-1}If
$\F_{d}/N$ is amenable, then 
\[
\dim_{H}\left(\pi_{\Phi}\left(\Lambda_{\mathrm{r}}\left(N\right)\right)\right)=\dim_{H}\left(\pi_{\Phi}\left(\Lambda_{\mathrm{r}}\left(F_{d}\right)\right)\right).
\]

\item \label{enu:maintheorem-limitset-symmetriclowerbound-1}We have 
\[
\dim_{H}\left(\pi_{\Phi}\left(\Lambda_{\mathrm{r}}\left(N\right)\right)\right)\ge\dim_{H}\left(\pi_{\Phi}\left(\Lambda_{\mathrm{r}}\left(F_{d}\right)\right)\right)\big/2
\]
and strict inequality holds if $\left(N,\Phi\right)$ is symmetric. 
\end{enumerate}
\item Suppose that $\left(N,\Phi\right)$ is of divergence type.\label{enu:maintheorem-limitset-3}

\begin{enumerate}
\item \label{enu:maintheorem-limitset-divtype-amenable}Then $\F_{d}/N$
is amenable.
\item \label{enu:maintheorem-limitset-divtype-full-symmaverage}We have
\foreignlanguage{english}{\textup{$\dim_{H}\left(\pi_{\Phi}\left(\Lambda_{\mathrm{r}}\left(N\right)\right)\right)=\dim_{H}\left(\pi_{\Phi}\left(\Lambda_{\mathrm{r}}\left(F_{d}\right)\right)\right)$}\textup{\emph{
if and only if $\left(N,\Phi\right)$ is symmetric on average.}}}\emph{ }
\end{enumerate}
\end{enumerate}
\end{thm}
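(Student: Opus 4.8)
The plan is to recast every assertion as a statement about the Poincar\'{e} series $P_{N}(u,\Phi)=\sum_{\omega\in N}\e^{uS_{\omega}\zeta}$ and its critical exponent $\delta_{N}$, and then to transfer the conclusions back to Hausdorff dimension via a symbolic Bishop--Jones theorem. For the first assertion, the upper bound comes from covering $\pi_{\Phi}(\Lambda_{\mathrm r}(N))$ by cylinder images $\pi_{\Phi}([\omega])$: bounded distortion of the conformal system gives $\diam\pi_{\Phi}([\omega])\asymp\e^{S_{\omega}\zeta}$, so for $u>\delta_{N}$ the $u$-dimensional cover mass over first-return words lying in $N$ is summable and controls the Hausdorff measure, yielding $\dim_{H}\le\delta_{N}$. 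For the lower bound I would build an induced conformal graph directed Markov system whose alphabet consists of (finitely many, then exhausting) first-return loops at the identity coset of the Schreier graph $\F_{d}/N$, i.e.\ reduced words representing elements of $N$; its limit set is contained in $\pi_{\Phi}(\Lambda_{\mathrm{ur}}(N))$, and by the Bowen pressure equation its dimension equals the zero of the induced pressure, which can be pushed up to $\delta_{N}$, with a Gibbs/mass-distribution argument closing the estimate. Positivity $\delta_{N}>0$ follows because a non-trivial normal subgroup of $\F_{d}$ with $d\ge2$ supplies at least two independent return loops, hence a free contracting subsystem of positive dimension; since $\Lambda_{\mathrm{ur}}(N)\subset\Lambda_{\mathrm r}(N)$ the three quantities coincide. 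Applying this with $N=\F_{d}$ (where $\Lambda_{\mathrm r}(\F_{d})=\Sigma$) identifies $\dim_{H}\pi_{\Phi}(\Lambda_{\mathrm r}(\F_{d}))=\delta$, so all remaining parts become comparisons of $\delta_{N}$ with $\delta$.

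Next, the amenability dichotomy. For the non-amenable gap I would encode $P_{N}(u,\Phi)$ through a $u$-twisted Ruelle operator acting on $\ell^{2}(\F_{d}/N)$, whose norm is the spectral radius of the induced weighted walk on the Schreier graph; a Kesten/Grigorchuk-type cogrowth criterion then forces, under non-amenability of $\F_{d}/N$, this radius strictly below the one for $\F_{d}$, which translates into $\delta_{N}<\delta$. The H\"older (non-locally-constant) potential $\zeta$ is absorbed by replacing the combinatorial walk with the transfer operator and invoking bounded distortion. For the converse under asymptotic symmetry, amenability yields a F\o lner sequence of cosets; summing the coset contributions $a_{n}(g)=\sum_{\omega\in Ng,\,|\omega|=n}\e^{uS_{\omega}\zeta}$ over a F\o lner set and using asymptotic symmetry to pass between $Ng$ and $Ng^{-1}$ without losing exponential factors shows $P_{N}(\cdot,\Phi)$ has the full exponential rate at $u=\delta$, forcing $\delta_{N}=\delta$; together these give the amenable $\Longleftrightarrow$ full-dimension equivalence in the symmetric case.

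For the half bound I would use normality: words $\omega\in Ng$ and $\rho\in Ng^{-1}$ concatenate, at the cost of a constant to avoid cancellation, to an element $\omega\rho\in N$ with $S_{\omega\rho}\zeta\ge S_{\omega}\zeta+S_{\rho}\zeta-O(1)$ and a unique length-$n$ split, giving $b_{2n}:=\sum_{\omega\in N,\,|\omega|=2n}\e^{uS_{\omega}\zeta}\gtrsim\sum_{g}a_{n}(g)a_{n}(g^{-1})$. Asymptotic symmetry replaces $a_{n}(g^{-1})$ by $a_{n}(g)$ up to subexponential slack, and a Cauchy--Schwarz step controlling the number of cosets met in a shell yields $\delta_{N}\ge\delta/2$; boundedness of $(c_{n})$ in the genuinely symmetric case removes the slack, so equality would force a rigid concentration of coset mass incompatible with a non-trivial quotient, upgrading this to a strict inequality. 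Turning to divergence type, I would prove the amenability conclusion contrapositively: non-amenability produces a spectral gap for the induced operator, and a spectral gap makes $P_{N}$ of convergence type at its own critical exponent (in the spirit of Guivarc'h--Roblin), contradicting divergence type. Finally, under divergence type the symbolic Patterson--Sullivan construction gives a unique $\delta_{N}$-conformal, conservative, ergodic measure on $\Lambda_{\mathrm r}(N)$; $\delta_{N}=\delta$ holds precisely when this measure is comparable to the restriction of the full Patterson measure, and a Hopf ratio-ergodic-theorem argument along the period $p$ identifies this comparability with the Ces\`aro balance of the forward and backward coset sums, that is, with symmetry on average.

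The hard part will be the amenability dichotomy in the weighted conformal setting, parts~(2) and~(4)(a), where the clean spectral/cogrowth theory for symmetric simple random walks must be reproduced for the Ruelle operator twisted by the H\"older geometric potential, with only bounded distortion available in place of homogeneity. The most delicate point is the critical (divergence-type) case feeding part~(4)(b): one must upgrade a convergence-type spectral gap to the sharp recurrence--amenability equivalence and then match, via the ratio ergodic theorem, the analytic content of $\delta_{N}=\delta$ with the exact Ces\`aro condition encoded by symmetry on average, keeping the reversal $\omega\mapsto\omega^{-1}$ and the period $p$ carefully aligned throughout.
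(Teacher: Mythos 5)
Your overall architecture is the paper's own: part (\ref{enu:maintheorem-limitset-0}) is proved exactly via your induced first-return system (Definition \ref{N-induced-GDMS}), the sandwich of $\pi_{\Phi}\left(\Lambda_{\mathrm{r}}\left(N\right)\right)$ between $J^{*}\bigl(\tilde{\Phi}\bigr)$ and countably many Lipschitz images of $J\bigl(\tilde{\Phi}\bigr)$, and the generalised Bowen formula (Theorem \ref{thm:cgdms-bowen-formula}, Proposition \ref{prop:t-vs-dimension}), and your half bound is the same bridged-concatenation estimate as Proposition \ref{prop:weak-lower-deltahalf-bound}. (Your positivity argument via two free return loops is workable but needs the loops to be admissibly composable; the paper instead derives $\delta_{N}>0$ from the half bound with $\varphi=0$, Lemma \ref{lem:tzero-is-positive}.) However, two of your steps would fail as written, and the amenability assertions rest on machinery your outline does not supply.

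First, the combinatorial step in your half bound is miscast. A ``Cauchy--Schwarz step controlling the number of cosets met in a shell'' is fatally lossy: when $\F_{d}/N$ has exponential growth, words of length $n$ meet exponentially many cosets, and dividing by that count destroys the exponent. No such control is needed: after asymptotic symmetry converts the sum over $Ng^{-1}$ into one over $Ng$ (as in (\ref{eq:poincare-series})), one keeps only the diagonal, $\bigl(\sum_{\omega\in Ng\cap\Sigma^{n}}\e^{S_{\omega}\varphi}\bigr)^{2}\ge\sum_{\omega\in Ng\cap\Sigma^{n}}\e^{2S_{\omega}\varphi}$, and summing over $g$ recovers the full partition function of $2\varphi$; with $\varphi=\left(\delta/2\right)\zeta$ and Fact \ref{t-characterisation} this gives $\delta_{N}\ge\delta/2$. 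Likewise your strictness mechanism (``rigid concentration of coset mass'') is not an argument. The actual chain (Proposition \ref{prop:weak-lower-deltahalf-bound} (\ref{enu:strong-delta-half-bound}), Corollary \ref{cor:non-amenable-stricthalfbound}) is: equality at the half bound, together with the Gibbs measure for $2\varphi$ and disjointness of the shells, forces divergence of the return series, i.e.\ recurrence of $\varphi\circ\pi_{1}$; recurrence forces amenability (Theorem \ref{thm:recurrence-implies-amenable}); amenability plus asymptotic symmetry forces full pressure (Theorem \ref{thm:amenable-fullpressure-j}); and Theorem \ref{thm:fullpressure-implies-amenable} then contradicts non-amenability, while in the amenable case $\delta_{N}=\delta>\delta/2$ trivially.

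Second, parts (\ref{enu:maintheorem-limitset-nonamenable-implies-gap-1}) and (\ref{enu:maintheorem-limitset-divtype-amenable}) carry no symmetry hypothesis on $\left(N,\Phi\right)$, so your Kesten/Grigorchuk $\ell^{2}$-spectral route is unavailable: the transfer operator twisted by the H\"older weight is not self-adjoint on $\ell^{2}\left(\F_{d}/N\right)$ (the clean cogrowth theory genuinely needs symmetric weights), and since the weight is not locally constant the operator does not even reduce to a weighted walk on the Schreier graph; this is precisely why the paper invokes Stadlbauer's extension of Kesten's criterion to topological Markov chains (Theorem \ref{thm:fullpressure-implies-amenable}) as a black box rather than reproving it. Moreover, in (\ref{enu:maintheorem-limitset-divtype-amenable}) your inference ``spectral gap $\Rightarrow$ convergence type at its own critical exponent'' is a non sequitur: a gap is relative to the full rate $\delta$, whereas divergence type concerns the behaviour of $P_{N}$ at $\delta_{N}$ itself; what is needed is the Guivarc'h-type theorem that recurrence of the group extension forces amenability, i.e.\ exactly Theorem \ref{thm:recurrence-implies-amenable}, which your sketch presupposes rather than proves. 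Similarly, your Patterson--Sullivan/ratio-ergodic plan for (\ref{enu:maintheorem-limitset-divtype-full-symmaverage}) is a fair description of the proof of Proposition \ref{prop:recurrent-fullpressure-symmetric} in \cite{Jaerisch12b}, but it is a cited theorem, not something your outline establishes. The viable repair is to do what the paper does: take Theorems \ref{thm:amenable-fullpressure-j}, \ref{thm:fullpressure-implies-amenable}, \ref{thm:recurrence-implies-amenable} and Proposition \ref{prop:recurrent-fullpressure-symmetric} as inputs and route all the amenability statements through the free-energy comparison of Proposition \ref{prop:free-energy-conjugate-mainresults}.
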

Our next aim is to investigate the set $\pi_{\Phi}\left(\Lambda_{\mathrm{r}}\left(N\right)\cap\mathcal{F}\left(\alpha,\Phi,\psi\right)\right)$.
To this end, let us define 
\[
\alpha_{-}:=\inf\left\{ \alpha\in\R:\mathcal{F}\left(\alpha,\Phi,\psi\right)\neq\emptyset\right\} \quad\mbox{and}\quad\alpha_{+}:=\sup\left\{ \alpha\in\R:\mathcal{F}\left(\alpha,\Phi,\psi\right)\neq\emptyset\right\} .
\]
By H\"older's inequality we see that the free energy function $t_{N}:\R\rightarrow\R$
of $\left(N,\Phi,\psi\right)$ is convex. We denote by $\partial t_{N}(\beta)$
the subdifferential of $t_{N}$ at $\beta$ (\cite[Section 23]{rockafellar-convexanalysisMR0274683}),
and we set $\partial t_{N}(A):=\bigcup_{\beta\in A}\partial t_{N}(\beta)$,
for $A\subset\R$. Let $\Int(A)$ denote the interior of a set $A\subset\R$.
We will always assume that $\alpha_{-}<\alpha_{+}$, which is equivalent
to the assumption that, for each $\left(a_{1},a_{2}\right)\in\R^{2}\setminus\left\{ 0\right\} $,
the unique Gibbs measures associated to $a_{1}\zeta$ and $a_{2}\psi$
are distinct (\cite{MR511655,MR1435198}). Then it is well-known that
$t$ is a real-analytic and strictly convex function which satisfies
$-\partial t\left(\R\right)=\left(\alpha_{-},\alpha_{+}\right)$.
Moreover, we have that $\mathcal{F}\left(\alpha,\Phi,\psi\right)=\emptyset$
if and only if $\alpha\notin\left[\alpha_{-},\alpha_{+}\right]$ (\cite{MR1738952}).
Note that, if $\alpha_{-}$ and $\alpha_{+}$ coincide,  then $\mathcal{F}\left(\alpha_{-},\Phi,\psi\right)=\Sigma$
and the analysis given in Theorem \ref{thm:maintheorem-limitset}
applies. 

To state our next main result, we define the convex conjugate of $t_{N}$
(\cite[Section 12]{rockafellar-convexanalysisMR0274683})  given by
\[
t_{N}^{*}:\R\rightarrow\R\cup\left\{ \infty\right\} ,\quad t_{N}^{*}\left(\alpha\right):=\sup_{\beta\in\R}\left\{ \beta\alpha-t_{N}\left(\beta\right)\right\} ,\quad\alpha\in\R.
\]

\begin{thm}
\label{thm:maintheorem}Let $\Phi$ denote a conformal graph directed
Markov system associated to $\F_{d}$ with $d\ge2$. Let $\psi:\Sigma\rightarrow\R$
be  H\"older continuous. Suppose that $\alpha_{-}<\alpha_{+}$. Then
the following holds for each non-trivial normal subgroup \emph{$N$
of $\F_{d}$. }
\begin{enumerate}
\item \label{enu:maintheorem-0}For each $\alpha\in-\Int\left(\partial t_{N}(\R)\right)\subset\left(\alpha_{-},\alpha_{+}\right)$
we have 
\[
\dim_{H}\left(\pi_{\Phi}\left(\Lambda_{\mathrm{ur}}\left(N\right)\cap\mathcal{F}\left(\alpha,\Phi,\psi\right)\right)\right)=\dim_{H}\left(\pi_{\Phi}\left(\Lambda_{\mathrm{r}}\left(N\right)\cap\mathcal{F}\left(\alpha,\Phi,\psi\right)\right)\right)=-t_{N}^{*}\left(-\alpha\right)>0.
\]
If $\left(N,\Phi,\psi\right)$ is asymptotically symmetric then $-\Int\left(\partial t_{N}(\R)\right)=\left(\alpha_{-},\alpha_{+}\right)$.
 
\item \label{enu:maintheorem-nonamenable-implies-gap}If $\F_{d}/N$ is
non-amenable, then for each $\alpha\in\left(\alpha_{-},\alpha_{+}\right)$,
\[
\dim_{H}\left(\pi_{\Phi}\left(\Lambda_{\mathrm{r}}\left(N\right)\cap\mathcal{F}\left(\alpha,\Phi,\psi\right)\right)\right)<\dim_{H}\left(\pi_{\Phi}\left(\mathcal{F}\left(\alpha,\Phi,\psi\right)\right)\right).
\]

\item Suppose that $\left(N,\Phi,\psi\right)$ is asymptotically symmetric
and let $\alpha\in\left(\alpha_{-},\alpha_{+}\right)$. \label{enu:maintheorem-2}

\begin{enumerate}
\item \label{enu:maintheorem-amenablesymmetric-fulldimension}If $\F_{d}/N$
is amenable, then 
\[
\dim_{H}\left(\pi_{\Phi}\left(\Lambda_{\mathrm{r}}\left(N\right)\cap\mathcal{F}\left(\alpha,\Phi,\psi\right)\right)\right)=\dim_{H}\left(\pi_{\Phi}\left(\mathcal{F}\left(\alpha,\Phi,\psi\right)\right)\right).
\]

\item \label{enu:maintheorem-symmetriclowerbound}We have 
\[
\dim_{H}\left(\pi_{\Phi}\left(\Lambda_{\mathrm{r}}\left(N\right)\cap\mathcal{F}\left(\alpha,\Phi,\psi\right)\right)\right)\ge\dim_{H}\left(\pi_{\Phi}\left(\mathcal{F}\left(\alpha,\Phi,\psi\right)\right)\right)\big/2
\]
and strict inequality holds if $\left(N,\Phi,\psi\right)$ is symmetric. 
\end{enumerate}
\item Let $\beta\in\R$ and suppose that $\left(N,\Phi,\psi\right)$ is
of divergence type in $\beta$. \label{enu:maintheorem-3}

\begin{enumerate}
\item \label{enu:enu:maintheorem-divtype-amenable}Then $\F_{d}/N$ is amenable.
\item \label{enu:maintheorem-divtype-full-implies-symmaverage}If $\alpha\in-\partial t(\beta)$
and $\dim_{H}\left(\pi_{\Phi}\left(\Lambda_{\mathrm{r}}\left(N\right)\cap\mathcal{F}\left(\alpha,\Phi,\psi\right)\right)\right)=\dim_{H}\left(\pi_{\Phi}\left(\mathcal{F}\left(\alpha,\Phi,\psi\right)\right)\right)$,
 then \linebreak  $\left(N,\Phi,\psi\right)$ is symmetric on average
in $\beta$. 
\item \label{enu:maintheorem-divtype-symmaverage-implies-full}If $\alpha\in-\left(\partial t_{N}(\beta)\cap\Int\left(\partial t_{N}(\R)\right)\right)$
and $\left(N,\Phi,\psi\right)$ is symmetric on average in $\beta$,
then \linebreak $\dim_{H}\left(\pi_{\Phi}\left(\Lambda_{\mathrm{r}}\left(N\right)\cap\mathcal{F}\left(\alpha,\Phi,\psi\right)\right)\right)=\dim_{H}\left(\pi_{\Phi}\left(\mathcal{F}\left(\alpha,\Phi,\psi\right)\right)\right)$.
\end{enumerate}
\end{enumerate}
\end{thm}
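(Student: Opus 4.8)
The plan is to derive Theorem \ref{thm:maintheorem} from Theorem \ref{thm:maintheorem-limitset} by a thermodynamic tilting argument in the parameter $\beta$. For fixed $\beta\in\R$ the two-parameter series $\sum_{\omega\in N}\e^{\beta S_\omega\psi+uS_\omega\zeta}$ is the Poincar\'e series of $(N,\Phi)$ with the geometric potential $\zeta$ tilted by $\beta\psi$, and its exponent of convergence in $u$ is exactly $t_N(\beta)$. Thus $t_N(\beta)$ plays the role that $\delta_N$ plays in Theorem \ref{thm:maintheorem-limitset}, and $t(\beta)=t_{\F_d}(\beta)$ the role of $\delta$. Crucially, the hypotheses of asymptotic symmetry and symmetry in the definitions preceding Theorem \ref{thm:maintheorem-limitset} are quantified over all $\beta,u\in\R$, so each symmetry assumption on $(N,\Phi,\psi)$ transfers verbatim to the tilted system; likewise divergence type in $\beta$ is precisely divergence type of the tilted system at its critical exponent, and non-amenability of $\F_d/N$ is a property of the quotient independent of the potential. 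Finally, the right-hand sides $\dim_H(\pi_\Phi(\mathcal{F}(\alpha,\Phi,\psi)))$ are the special case $N=\F_d$ of part \eqref{enu:maintheorem-0} below, since $\Lambda_{\mathrm r}(\F_d)=\Sigma$ and $t$ is real-analytic and strictly convex, so that these quantities equal $-t^*(-\alpha)$.

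First I would prove the dimension formula \eqref{enu:maintheorem-0}. For $\alpha\in-\Int(\partial t_N(\R))$ there is a $\beta$ with $-\alpha\in\partial t_N(\beta)$ in the interior, whence the duality identity $-t_N^*(-\alpha)=\beta\alpha+t_N(\beta)$ holds. On a path $\tau\in\mathcal{F}(\alpha,\Phi,\psi)$ the Birkhoff sums obey $S_n\psi(\tau)\approx\alpha\,S_n\zeta(\tau)$, so restricting the tilted weight to the level set converts $\e^{\beta S_\omega\psi+uS_\omega\zeta}$ asymptotically into the conformal weight $\e^{(\beta\alpha+u)S_\omega\zeta}$; evaluating at $u=t_N(\beta)$ gives the conformal exponent $\beta\alpha+t_N(\beta)=-t_N^*(-\alpha)$. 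The upper bound for the Hausdorff dimension then follows from a covering argument using this exponent together with the level-set constraint, exactly as the bound $\delta_N$ is obtained in Theorem \ref{thm:maintheorem-limitset}\eqref{enu:maintheorem-limitset-0}. For the lower bound I would transport the measure construction behind that theorem to the tilted potential $\beta\psi+t_N(\beta)\zeta$: one builds, on the subsystem of $N$-returns, a Gibbs measure giving full measure to $\Lambda_{\mathrm{ur}}(N)$, whose generic points lie in $\mathcal{F}(\alpha,\Phi,\psi)$ by the choice $-\alpha\in\partial t_N(\beta)$ and whose exact dimension equals $-t_N^*(-\alpha)$. The final assertion of \eqref{enu:maintheorem-0}, that asymptotic symmetry forces $-\Int(\partial t_N(\R))=(\alpha_-,\alpha_+)$, reduces to ruling out affine pieces of $t_N$ at the ends of its domain, which again follows from the tilted form of the symmetry hypothesis.

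With the identity $\dim_H(\pi_\Phi(\Lambda_{\mathrm r}(N)\cap\mathcal{F}(\alpha,\Phi,\psi)))=-t_N^*(-\alpha)$ and its $N=\F_d$ specialisation $\dim_H(\pi_\Phi(\mathcal{F}(\alpha,\Phi,\psi)))=-t^*(-\alpha)$ available, parts \eqref{enu:maintheorem-nonamenable-implies-gap}--\eqref{enu:maintheorem-3} become comparisons of $t_N(\beta)$ with $t(\beta)$ at the optimising $\beta$, each the tilted image of the corresponding part of Theorem \ref{thm:maintheorem-limitset}. Non-amenability of $\F_d/N$ yields $t_N(\beta)<t(\beta)$, hence the strict gap \eqref{enu:maintheorem-nonamenable-implies-gap}, via Theorem \ref{thm:maintheorem-limitset}\eqref{enu:maintheorem-limitset-nonamenable-implies-gap-1} applied to the tilted system; asymptotic symmetry together with amenability gives $t_N(\beta)=t(\beta)$ and hence \eqref{enu:maintheorem-amenablesymmetric-fulldimension}, via Theorem \ref{thm:maintheorem-limitset}\eqref{enu:maintheorem-limitset-amenablesymmetric-fulldimension-1}; the half-bound \eqref{enu:maintheorem-symmetriclowerbound} and its strict form come from Theorem \ref{thm:maintheorem-limitset}\eqref{enu:maintheorem-limitset-symmetriclowerbound-1}; and divergence type in $\beta$ forces amenability \eqref{enu:enu:maintheorem-divtype-amenable} via Theorem \ref{thm:maintheorem-limitset}\eqref{enu:maintheorem-limitset-divtype-amenable}. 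The equivalences \eqref{enu:maintheorem-divtype-full-implies-symmaverage} and \eqref{enu:maintheorem-divtype-symmaverage-implies-full} between full dimension and symmetry on average in $\beta$ are the tilted versions of Theorem \ref{thm:maintheorem-limitset}\eqref{enu:maintheorem-limitset-divtype-full-symmaverage}, where the hypothesis $\alpha\in-\partial t(\beta)$ in \eqref{enu:maintheorem-divtype-full-implies-symmaverage} serves to align the optimising parameter of the subgroup spectrum with that of the full spectrum, and the period $p$ enters because the tilted return series of $N$ to a coset is carried, up to comparable factors, by word lengths in a single residue class modulo $p$.

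The main obstacle I anticipate is the lower bound in part \eqref{enu:maintheorem-0}: one must exhibit a subset of $\pi_\Phi(\Lambda_{\mathrm{ur}}(N)\cap\mathcal{F}(\alpha,\Phi,\psi))$ of dimension $-t_N^*(-\alpha)$ while simultaneously controlling three constraints, namely membership in the level set, the uniform radiality forcing returns to a fixed coset with bounded gaps, and the covering estimates that realise the dimension. Transporting the Gibbs-measure construction of Theorem \ref{thm:maintheorem-limitset} to the tilted potential requires that the tilted subsystem still enjoys the finiteness and mixing properties needed for the thermodynamic formalism, and that its Gibbs measure be exact-dimensional with entropy and Lyapunov data arranged so that $-\alpha\in\partial t_N(\beta)$ genuinely pins the Birkhoff ratio at $\alpha$. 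The secondary difficulty lies in part \eqref{enu:maintheorem-3}, where one works exactly at the critical exponent $t_N(\beta)$: the renewal-type argument underlying Theorem \ref{thm:maintheorem-limitset}\eqref{enu:maintheorem-limitset-divtype-full-symmaverage} must be carried through with the period $p$ tracked at every step, which is the delicate point in establishing both implications of the symmetric-on-average characterisation.
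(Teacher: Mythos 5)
Your overall architecture matches the paper's. Part (\ref{enu:maintheorem-0}) is proved exactly as you describe: via the $N$-induced first-return GDMS $\tilde{\Phi}$ and a multifractal formalism on it (Proposition \ref{prop:t-vs-dimension}), using the identification $\tilde{t}=t_{N}$, a covering argument for the upper bound, and \emph{finite-alphabet} subsystems with the exhaustion principle $\tilde{t}=\sup_{n}\tilde{t}_{n}$ for the lower bound. On the obstacle you anticipate there: a single Gibbs measure on the full induced system cannot work when $N$ has infinite index, since the induced alphabet is then infinite and any fully supported invariant measure almost surely leaves every finite symbol set, destroying uniform radiality; the boundedness condition built into $\tilde{\mathcal{F}}^{*}\left(\alpha\right)$ together with the exhaustion is the paper's precise fix, so your instinct is right but the construction must be by finite approximation, not by one Gibbs measure at the (possibly irregular) critical parameter. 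Parts (\ref{enu:maintheorem-nonamenable-implies-gap})--(\ref{enu:maintheorem-3}) are indeed obtained by tilting and convex duality; however, the paper proves the tilted comparisons (Proposition \ref{prop:free-energy-conjugate-mainresults}) directly for arbitrary H\"older $\psi$ and recovers Theorem \ref{thm:maintheorem-limitset} as the case $\beta=0$, so your derivation runs in the opposite direction. That inversion matters formally: Theorem \ref{thm:maintheorem-limitset} cannot be invoked as a black box for a ``tilted system'', because $\beta\psi+u\zeta$ is not the geometric potential of any conformal GDMS; one must descend to the pressure-level results of Section \ref{sec:Group-Extended-Markov} (Theorems \ref{thm:amenable-fullpressure-j}, \ref{thm:fullpressure-implies-amenable}, \ref{thm:recurrence-implies-amenable}, Propositions \ref{prop:recurrent-fullpressure-symmetric} and \ref{prop:weak-lower-deltahalf-bound}), which hold for arbitrary H\"older potentials.

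The one step that fails as written is the half-bound (\ref{enu:maintheorem-symmetriclowerbound}). Your dictionary ``$\delta_{N}\mapsto t_{N}\left(\beta\right)$, $\delta\mapsto t\left(\beta\right)$'' turns Theorem \ref{thm:maintheorem-limitset} (\ref{enu:maintheorem-limitset-symmetriclowerbound-1}) into $t_{N}\left(\beta\right)\ge t\left(\beta\right)/2$, and dualising that gives $-t_{N}^{*}\left(-\alpha\right)=\inf_{\beta}\left\{ t_{N}\left(\beta\right)+\beta\alpha\right\} \ge\frac{1}{2}\inf_{\beta}\left\{ t\left(\beta\right)+2\beta\alpha\right\} =-t^{*}\left(-2\alpha\right)/2$, which is not the asserted bound and is even $-\infty$ once $-2\alpha$ falls outside $\overline{\partial t\left(\R\right)}$ (Fact \ref{convexconjugate-facts} (\ref{enu:conjugate-facts-2})). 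The correct tilted statement involves a doubling of $\beta$: one applies Proposition \ref{prop:weak-lower-deltahalf-bound} (\ref{enu:weak-delta-half-bound}) to $\varphi=\beta\psi+\left(t\left(2\beta\right)/2\right)\zeta$, so that $\mathcal{P}\left(2\varphi\right)=0$, obtaining $2t_{N}\left(\beta\right)\ge t\left(2\beta\right)$; only this dualises to $-t_{N}^{*}\left(-\alpha\right)\ge-t^{*}\left(-\alpha\right)/2$ via the substitution $\beta\mapsto2\beta$. The doubling is invisible at $\beta=0$, which is exactly why the naive ``tilted image'' of Theorem \ref{thm:maintheorem-limitset} misleads here. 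The same doubled inequality, made strict by Corollary \ref{cor:non-amenable-stricthalfbound} in the symmetric non-amenable case, yields the strict bound for $\alpha\in-\partial t_{N}\left(\R\right)$, and it is also what proves $\Int\left(\partial t_{N}\left(\R\right)\right)=\Int\left(\partial t\left(\R\right)\right)$ (finiteness of $t_{N}^{*}$ wherever $t^{*}$ is finite), rather than your reduction to ruling out affine pieces of $t_{N}$. The remaining parts go through as you sketch: the strict gap in (\ref{enu:maintheorem-nonamenable-implies-gap}) via Theorem \ref{thm:fullpressure-implies-amenable} at a $\beta$ with $-\alpha\in\partial t\left(\beta\right)$, part (\ref{enu:maintheorem-amenablesymmetric-fulldimension}) via Theorem \ref{thm:amenable-fullpressure-j}, and part (\ref{enu:maintheorem-3}) via Theorem \ref{thm:recurrence-implies-amenable} and Proposition \ref{prop:recurrent-fullpressure-symmetric}; the period $p$ is already encapsulated in the cited characterisation of recurrent potentials, so no renewal argument needs to be redone at this stage.
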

\begin{rem}
\label{rem-finitelygenerated-has-finiteindex}If $N$ is finitely
generated, then $\left(N,\Phi\right)$ is of divergence type by Lemma
\ref{lem:regular-iff-divergencetype}. Hence, $\F_{d}/N$ is amenable
by Theorem \ref{thm:maintheorem-limitset} (\ref{enu:maintheorem-limitset-divtype-amenable}).
In fact, it is well-known that, if $N$ is a finitely generated normal
subgroup of $\F_{d}$, then $\F_{d}/N$ is finite. In particular,
we then have that $\Lambda_{\mathrm{r}}\left(N\right)=\Lambda_{\mathrm{r}}\left(\F_{d}\right)$. 
\end{rem}

\begin{rem}
It is also worth noting that the proof of (\ref{enu:maintheorem-limitset-0})
in Theorem \ref{thm:maintheorem-limitset} and \ref{thm:maintheorem}
makes use of a certain induced graph directed Markov system $\tilde{\Phi}$
(Definition \ref{N-induced-GDMS}). This system $\tilde{\Phi}$ is
regular (\cite[Section 4, page 78]{MR2003772}) if and only if $\left(N,\Phi\right)$
is of divergence type by Lemma \ref{lem:regular-iff-divergencetype}.
In particular, we have by Theorem \ref{thm:maintheorem-limitset}
(\ref{enu:maintheorem-limitset-divtype-amenable}) that, if $\F_{d}/N$
is non-amenable, then $\tilde{\Phi}$ is irregular.
\end{rem}
Before we proceed to discuss applications to Kleinian groups, let
us briefly point out a relation to the cogrowth of group presentations\foreignlanguage{english}{
}$\left\langle g_{1},\dots,g_{d}|r_{1},r_{2},\dots\right\rangle $
on $d\ge2$ generators\foreignlanguage{english}{ (}\cite{MR599539}\foreignlanguage{english}{,
}\cite{MR678175})\foreignlanguage{english}{. If the geometric potential
of $\Phi$ is constant, then it is easy to see that the cogrowth $\eta$
of }$\left\langle g_{1},\dots,g_{d}|r_{1},r_{2},\dots\right\rangle $\foreignlanguage{english}{
is given by $\delta_{N}/\delta$, where $N$ is the normal subgroup
generated by $r_{1},r_{2},\dots$. Since every constant potential
is symmetric with respect to $N$, the criterion for $\delta_{N}=\delta$
in Theorem \ref{thm:maintheorem-limitset} (\ref{enu:maintheorem-nonamenable-implies-gap})
and (\ref{enu:maintheorem-amenablesymmetric-fulldimension}) is the
well-known cogrowth criterion $\eta=1$ for amenability of $\F_{d}/N$,
}and the\foreignlanguage{english}{ lower bound in Theorem \ref{thm:maintheorem}
(\ref{enu:maintheorem-symmetriclowerbound}) corresponds to the lower
bound $\eta>1/2$ for the cogrowth (}\cite{MR599539}\foreignlanguage{english}{,
}\cite{MR678175})\foreignlanguage{english}{. }

\subsection{Related results for Kleinian groups}

Let us now state the analogous results for Kleinian groups, which
have motivated our results. We first recall some basic notations for
Kleinian groups. For references on limit sets of Kleinian groups and
the associated hyperbolic manifolds, we refer the reader to \cite{Beardon,MR959135,MR1041575,MR1638795,MR2191250}. 

For a Kleinian group $\Gamma$ acting on the Poincar\'{e} disc model
$\mathbb{D}:=\left\{ z\in\R^{n+1}:\Vert z\Vert<1\right\} $ of hyperbolic
$\left(n+1\right)$-space, $n\in\N$, the Poincar\'{e} series is,
for each $s\in\R$, given by $P\left(\Gamma,s\right):=\sum_{\gamma\in\Gamma}\e^{-sd\left(0,\gamma\left(0\right)\right)}$,
where $d$ denotes the hyperbolic metric on $\mathbb{D}$. The exponent
of convergence of $\Gamma$ is given by $\delta\left(\Gamma\right):=\inf\left\{ s\ge0:P\left(\Gamma,s\right)<\infty\right\} $.
It is well-known by a theorem of Bishop and Jones (\cite{MR1484767})
that the exponent of convergence of a non-elementary Kleinian group
$\Gamma$ is equal to the Hausdorff dimension of the (uniformly) radial
limit set of $\Gamma$, that is 
\begin{equation}
\delta\left(\Gamma\right)=\dim_{H}\left(L_{\mathrm{ur}}\left(\Gamma\right)\right)=\dim_{H}\left(L_{\mathrm{r}}\left(\Gamma\right)\right).\label{eq:bishop-jones-intro}
\end{equation}

Passing from a non-elementary Kleinian group $\Gamma$ to a normal
subgroup $N$ of $\Gamma$ gives rise to a normal covering of the
associated hyperbolic manifolds. Brooks proved in \cite{MR783536}
that if $\Gamma$ is convex cocompact and $\delta\left(\Gamma\right)>n/2$,
then 
\begin{equation}
\delta\left(\Gamma\right)=\delta\left(N\right)\mbox{ if and only if }\Gamma/N\mbox{ is amenable}.\label{eq:amenability-dichotomy-intro}
\end{equation}
A recent result of Stadlbauer (\cite{Stadlbauer11}) shows that the
amenability dichotomy in (\ref{eq:amenability-dichotomy-intro}) holds
for all essentially free Kleinian groups $\Gamma$ with arbitrary
exponent of convergence. 

A complementary result is due to Falk and Stratmann (\cite[Theorem 2]{MR2097162})
which states that for each non-trivial normal subgroup $N$ of a non-elementary
Kleinian group $\Gamma$, we have that 
\begin{equation}
\delta\left(N\right)\ge\delta\left(\Gamma\right)/2.\label{eq:lower-bound-intro}
\end{equation}
Roblin (\cite{MR2166367})  proved that strict inequality in (\ref{eq:lower-bound-intro})
holds if the Kleinian group $\Gamma$ is of divergence type, that
is $P\left(\Gamma,\delta\left(\Gamma\right)\right)=\infty$. Another
proof of this result was independently obtained by Bonfert-Taylor,
Matsuzaki and Taylor (\cite{Bonfert-Taylor2012}), if $\Gamma$ is
convex cocompact. A related result by Matsuzaki and Yabuki in \cite{MR2486788}
states that 
\begin{equation}
\mbox{if }P\left(N,\delta\left(N\right)\right)=\infty\mbox{ then }\delta\left(\Gamma\right)=\delta\left(N\right).\label{eq:matsuyabuki-intro}
\end{equation}

In \cite{Jaerisch12a}, the author used this result of Matsuzaki and
Yabuki to give a short new proof of the strict inequality in (\ref{eq:lower-bound-intro})
if $\Gamma$ is of divergence type.
\begin{rem*}
A related result is due to Rees (\cite{MR627791,MR661820}) which
shows the following for a non-trivial normal subgroup $N$ of a convex
cocompact geometrically finite Fuchsian groups $\Gamma$ such that
$\Gamma/N\simeq\Z^{d}$ for some $d\in\N$: The critical exponents
$\delta\left(N\right)$ and $\delta\left(\Gamma\right)$ coincide.
Moreover, we have that $N$ is of divergence type if and only if $d\le2$. 
\end{rem*}
\selectlanguage{english}%
We first observe that Theorem \ref{thm:maintheorem-limitset} (\ref{enu:maintheorem-limitset-0})
gives the analog statement to (\ref{eq:bishop-jones-intro}). Moreover,
the results given in Theorem \ref{thm:maintheorem-limitset} (\ref{enu:maintheorem-limitset-nonamenable-implies-gap-1})
and (\ref{enu:maintheorem-limitset-amenablesymmetric-fulldimension-1})
extend the amenability dichotomy in (\ref{eq:amenability-dichotomy-intro}).
The lower bound in Theorem \ref{thm:maintheorem-limitset} (\ref{enu:maintheorem-limitset-symmetriclowerbound-1})
corresponds to (\ref{eq:lower-bound-intro}) and strict inequality
holds because the finitely generated group $\F_{d}$ is of divergence
type by \foreignlanguage{british}{Lemma \ref{lem:regular-iff-divergencetype}}.
Theorem \ref{thm:maintheorem-limitset} (\ref{enu:maintheorem-limitset-divtype-amenable})
and (\ref{enu:maintheorem-limitset-divtype-full-symmaverage}) shed
new light on (\ref{eq:matsuyabuki-intro}). Finally, the results of
Theorem \ref{thm:maintheorem} show that similar results hold, if
the radial limit set is intersected with the level set of a multifractal
decomposition with respect to a \foreignlanguage{british}{H\"older
continuous potential. }

Let us also remark that the symmetry assumptions imposed on the graph
directed Markov system $\Phi$ in Theorem \ref{thm:maintheorem-limitset}
and \ref{thm:maintheorem} mimic the property that $d(0,g(0))=d(0,g^{-1}(0))$
for every isometry $g$ with respect to the hyperbolic metric $d$.
This relation will be further illustrated in the following subsection.

\subsubsection{Application to normal subgroups of Kleinian groups of Schottky type}

\selectlanguage{british}%
Let us briefly describe a class of Kleinian groups, for which our
results are directly applicable. Let $\Gamma=\left\langle \gamma_{1},\dots,\gamma_{d}\right\rangle $
denote a Kleinian group of Schottky type with $d\ge2$. Then $\Gamma$
is a free group generated by hyperbolic transformations $\gamma_{1},\dots,\gamma_{d}$
(\cite[Definition 5.2]{Jaerisch11a}). Let $N$ denote a non-trivial
normal subgroup of $\Gamma$. The radial limit set $L_{r}\left(N\right)$
can be described in terms of a conformal graph directed Markov system
$\Phi_{\Gamma}$ associated to $\Gamma$. \foreignlanguage{english}{More
precisely, it is shown in }\cite[Proposition 5.6]{Jaerisch11a} that
there is a conformal graph directed Markov system $\Phi_{\Gamma}$
such that, for each non-trivial normal subgroup $N$ of $\Gamma$,
\begin{equation}
L_{\mathrm{r}}\left(N\right)=\pi_{\Phi_{\Gamma}}\left(\Lambda_{\mathrm{r}}\left(N\right)\right)\quad\mbox{and}\quad L_{\mathrm{ur}}\left(N\right)=\pi_{\Phi_{\Gamma}}\left(\Lambda_{\mathrm{ur}}\left(N\right)\right).\label{eq:schottky-via-gdms}
\end{equation}
The limit set $L(\Gamma)$ is equal to the limit set of $\Phi_{\Gamma}$
and a symbolic representation is given by $\Sigma:=\left\{ \tau\in I^{\N}:\tau_{i}\neq\tau_{i+1}^{-1}\right\} $
where $I:=\left\{ \gamma_{1},\gamma_{1}^{-1},\dots,\gamma_{d},\gamma_{d}^{-1}\right\} $.
The geometric potential $\zeta$ of $\Phi_{\Gamma}$ has the property
that there exists $C>0$ such that for all $\omega\in\F_{d}$, 
\begin{equation}
C^{-1}\e^{S_{\omega}\zeta}\le\e^{-d(0,\omega(0))}\le C\e^{S_{\omega}\zeta}.\label{eq:geometric-potential-vs-hyperbolicmetric}
\end{equation}
We refer to (see \cite{MR662473}) for details. We now consider the
(inverse) Lyapunov spectrum of $L(\Gamma)$ and its restriction to
$L_{r}\left(N\right)$. More precisely, we define for each $\alpha\in\R$
the level sets 
\[
\mathcal{L}\left(\alpha\right):=\left\{ \tau\in L\left(\Gamma\right):\lim_{n\rightarrow\infty}\frac{n}{d\left(0,\tau_{1}\dots\tau_{n}\left(0\right)\right)}=\alpha\right\} \quad\mbox{and}\quad\mathcal{L}_{N}\left(\alpha\right):=\mathcal{L}\left(\alpha\right)\cap L_{r}\left(N\right).
\]
Set $\alpha_{-}:=\min\left\{ \alpha\in\R:\mathcal{L}\left(\alpha\right)\neq\emptyset\right\} $
and $\alpha_{+}:=\max\left\{ \alpha\in\R:\mathcal{L}\left(\alpha\right)\neq\emptyset\right\} $.
As a corollary of Theorem \ref{thm:maintheorem}, we obtain the following
for the Lyapunov spectrum. 
\begin{cor}
Let $\Gamma$ denote a Kleinian group of Schottky type and let $N$
denote a non-trivial normal subgroup of $\Gamma$. Then for each $\alpha\in\left(\alpha_{-},\alpha_{+}\right)$
we have that 
\begin{equation}
\frac{\dim_{H}\left(\mathcal{L}\left(\alpha\right)\right)}{2}<\dim_{H}\left(\mathcal{L}_{N}\left(\alpha\right)\right)\le\dim_{H}\left(\mathcal{L}\left(\alpha\right)\right).\label{eq:lyapunov-kleinian}
\end{equation}
The second inequality in (\ref{eq:lyapunov-kleinian}) is an equality
if and only if $\Gamma/N$ is amenable. Moreover, if there exists
$\beta\in\R$ such that $\sum_{\omega\in N}\e^{t_{N}\left(\beta\right)S_{\omega}\zeta-\beta\left|\omega\right|}=\infty$,
then $\Gamma/N$ is amenable.\end{cor}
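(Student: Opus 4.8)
The plan is to deduce the corollary from Theorem \ref{thm:maintheorem} applied to the system $\Phi_{\Gamma}$ and the constant potential $\psi\equiv-1$. First I would identify the Lyapunov level sets with the symbolic multifractal level sets. Since $\sum_{i=0}^{n-1}\psi(\sigma^{i}(\tau))=-n$ and, by (\ref{eq:geometric-potential-vs-hyperbolicmetric}) together with bounded distortion for the H\"older potential $\zeta$, the Birkhoff sum $\sum_{i=0}^{n-1}\zeta(\sigma^{i}(\tau))$ differs from $-d(0,\tau_{1}\cdots\tau_{n}(0))$ by a uniformly bounded amount, the condition $\lim_{n}n/d(0,\tau_{1}\cdots\tau_{n}(0))=\alpha$ is equivalent to $\lim_{n}\sum\psi/\sum\zeta=\alpha$. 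Hence $\mathcal{F}(\alpha,\Phi_{\Gamma},-1)=\pi_{\Phi_{\Gamma}}^{-1}(\mathcal{L}(\alpha))$, and combining this with (\ref{eq:schottky-via-gdms}) and the injectivity of the coding map $\pi_{\Phi_{\Gamma}}$ (whence $\pi_{\Phi_{\Gamma}}$ commutes with intersections) gives $\dim_{H}(\mathcal{L}(\alpha))=\dim_{H}(\pi_{\Phi_{\Gamma}}(\mathcal{F}(\alpha,\Phi_{\Gamma},-1)))$ and $\dim_{H}(\mathcal{L}_{N}(\alpha))=\dim_{H}(\pi_{\Phi_{\Gamma}}(\Lambda_{\mathrm{r}}(N)\cap\mathcal{F}(\alpha,\Phi_{\Gamma},-1)))$. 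The standing hypothesis $\alpha_{-}<\alpha_{+}$ is automatic, since otherwise the interval $(\alpha_{-},\alpha_{+})$ is empty and there is nothing to prove.

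The crucial step, and the main obstacle, is to verify that $(N,\Phi_{\Gamma},-1)$ is \emph{symmetric} in the sense of the definition preceding Theorem \ref{thm:maintheorem-limitset}, for every non-trivial normal subgroup $N$. I would use that $\omega\mapsto\omega^{-1}$ is a length-preserving bijection from $\{\omega\in Ng:|\omega|=n\}$ onto $\{\omega\in Ng^{-1}:|\omega|=n\}$; this rests on $N$ being normal (so $\omega g^{-1}\in N$ gives $\omega^{-1}\in g^{-1}N=Ng^{-1}$) and on $\omega^{-1}$ being admissible whenever $\omega$ is. Along this bijection $S_{\omega}\psi=-|\omega|=S_{\omega^{-1}}\psi$ is preserved exactly, while $|S_{\omega}\zeta-S_{\omega^{-1}}\zeta|$ is uniformly bounded: indeed (\ref{eq:geometric-potential-vs-hyperbolicmetric}) gives $|S_{\omega}\zeta+d(0,\omega(0))|\le\log C$, and the isometry identity $d(0,\omega(0))=d(0,\omega^{-1}(0))$ forces $|S_{\omega}\zeta-S_{\omega^{-1}}\zeta|\le2\log C$. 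Consequently $\sum_{\omega\in Ng:|\omega|=n}\e^{\beta S_{\omega}\psi+uS_{\omega}\zeta}\le\e^{2|u|\log C}\sum_{\omega\in Ng^{-1}:|\omega|=n}\e^{\beta S_{\omega}\psi+uS_{\omega}\zeta}$ for all $g\in\F_{d}$, so we may take $c_{n}\equiv\e^{2|u|\log C}$ bounded and $N_{n}\equiv0$. This establishes symmetry, hence asymptotic symmetry, for all $\beta,u\in\R$, and is precisely where the geometry of the Schottky group enters.

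With these identifications and the symmetry in hand, the three assertions follow by quoting Theorem \ref{thm:maintheorem}. The upper bound $\dim_{H}(\mathcal{L}_{N}(\alpha))\le\dim_{H}(\mathcal{L}(\alpha))$ is monotonicity of Hausdorff dimension, since $\mathcal{L}_{N}(\alpha)\subset\mathcal{L}(\alpha)$; the strict lower bound $\dim_{H}(\mathcal{L}(\alpha))/2<\dim_{H}(\mathcal{L}_{N}(\alpha))$ is the strict case of Theorem \ref{thm:maintheorem} (\ref{enu:maintheorem-symmetriclowerbound}), which applies because $(N,\Phi_{\Gamma},-1)$ is symmetric. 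The equivalence of the equality $\dim_{H}(\mathcal{L}_{N}(\alpha))=\dim_{H}(\mathcal{L}(\alpha))$ with amenability of $\Gamma/N$ combines Theorem \ref{thm:maintheorem} (\ref{enu:maintheorem-nonamenable-implies-gap}), where non-amenability forces strict inequality, with (\ref{enu:maintheorem-amenablesymmetric-fulldimension}), where amenability together with the asymptotic symmetry just verified gives equality. Finally, since $\beta S_{\omega}\psi=-\beta|\omega|$ for $\psi\equiv-1$, the hypothesis $\sum_{\omega\in N}\e^{t_{N}(\beta)S_{\omega}\zeta-\beta|\omega|}=\infty$ states exactly that $(N,\Phi_{\Gamma},-1)$ is of divergence type in $\beta$, so amenability of $\Gamma/N$ follows from Theorem \ref{thm:maintheorem} (\ref{enu:enu:maintheorem-divtype-amenable}).
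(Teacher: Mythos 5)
Your proposal is correct and follows essentially the same route as the paper's proof: identify $\mathcal{L}(\alpha)$ and $\mathcal{L}_{N}(\alpha)$ with $\pi_{\Phi_{\Gamma}}\left(\mathcal{F}\left(\alpha,\Phi_{\Gamma},-\1\right)\right)$ and $\pi_{\Phi_{\Gamma}}\left(\Lambda_{\mathrm{r}}\left(N\right)\cap\mathcal{F}\left(\alpha,\Phi_{\Gamma},-\1\right)\right)$ via (\ref{eq:schottky-via-gdms}), (\ref{eq:geometric-potential-vs-hyperbolicmetric}) and injectivity of the coding map, verify that $\left(N,\Phi_{\Gamma},-\1\right)$ is symmetric using $d(0,\omega(0))=d(0,\omega^{-1}(0))$, and quote Theorem \ref{thm:maintheorem}. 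You in fact spell out details the paper leaves implicit (the length-preserving bijection $\omega\mapsto\omega^{-1}$ from $Ng$ to $Ng^{-1}$ via normality, and the explicit bound $\left|S_{\omega}\zeta-S_{\omega^{-1}}\zeta\right|\le2\log C$ yielding $c_{n}\equiv C^{2|u|}$, $N_{n}\equiv0$), which is a welcome clarification rather than a deviation.
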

\begin{proof}
Recall that by (\ref{eq:schottky-via-gdms}), we have $L_{\mathrm{r}}\left(N\right)=\pi_{\Phi_{\Gamma}}\left(\Lambda_{\mathrm{r}}\left(N\right)\right)$.
Furthermore, we observe that by (\ref{eq:geometric-potential-vs-hyperbolicmetric})
we have $\mathcal{L}\left(\alpha\right)=\pi_{\Phi_{\Gamma}}\left(\mathcal{F}\left(\alpha,\Phi_{\Gamma},-\1\right)\right)$
where $\1:\Sigma\rightarrow\left\{ 1\right\} $. Since $\pi_{\Phi_{\Gamma}}$
is one-to-one we have that $\mathcal{L}_{N}\left(\alpha\right)=\pi_{\Phi_{\Gamma}}\left(\mathcal{F}\left(\alpha,\Phi_{\Gamma},-\1\right)\cap\Lambda_{\mathrm{r}}\left(N\right)\right)$.
Using that $d(0,\omega(0))=d(0,\omega^{-1}(0))$ for each $\omega\in\F_{d}$,
we have $C\e^{S_{\omega}\zeta}\le C\e^{S_{\omega^{-1}}\zeta}$ by
(\ref{eq:geometric-potential-vs-hyperbolicmetric}). Thus, we have
that $\left(N,\Phi_{\Gamma},-\1\right)$ is symmetric. The corollary
now follows from Theorem \ref{thm:maintheorem}. 
\end{proof}
\selectlanguage{english}%

\subsection{Plan of the paper}

The results stated in Theorem \ref{thm:maintheorem-limitset} and
\ref{thm:maintheorem} are deduced from general results of the thermodynamic
formalism for group-extended Markov systems developed by Stadlbauer
(\cite{Stadlbauer11}) and by the author (\cite{JaerischDissertation11,Jaerisch12b,Jaerisch12c},
see also \foreignlanguage{british}{\cite{Jaerisch11a}}). These results
are given in Section \foreignlanguage{british}{\ref{sec:Group-Extended-Markov}}.
The general results stated in Proposition \ref{prop:weak-lower-deltahalf-bound},
which are used to derive (\ref{enu:maintheorem-limitset-symmetriclowerbound-1})
of Theorem \ref{thm:maintheorem-limitset} and\foreignlanguage{british}{
\ref{thm:maintheorem}}, are new in the context of group-extended
Markov systems.\foreignlanguage{british}{ To obtain the strict inequality
in }(\ref{enu:maintheorem-limitset-symmetriclowerbound-1}) of Theorem
\ref{thm:maintheorem-limitset} and\foreignlanguage{british}{ \ref{thm:maintheorem}}
and the results involving the divergence of the \foreignlanguage{british}{Poincar\'{e}
series }of the normal subgroup, we make use of a characterization
of recurrent potentials from \cite{Jaerisch12b}. 

\selectlanguage{british}%
In Proposition \ref{prop:t-vs-dimension} we develop a multifractal
formalism for the multifractal decomposition of the radial limit set
parametrised by a normal subgroup. We make use of an induced graph
directed Markov system (Definition \ref{N-induced-GDMS}) which is
generated by infinitely many maps if the normal subgroup is of infinite
index. A similar argument as in \cite{MundayKessStrat10} allows us
to relate the limit set of the induced graph directed Markov system
to the level sets $\mathcal{F}\left(\alpha,\Phi,\psi\right)$ (see
Proposition \ref{prop:t-vs-dimension}). We then use the methods from
\cite{JaerischKessebohmer:09} to establish the multifractal formalism.

\selectlanguage{english}%
For a graph directed Markov system $\Phi$ consisting of similarities,
the results stated in Theorem \ref{thm:maintheorem-limitset} (\ref{enu:maintheorem-nonamenable-implies-gap}),
(\ref{enu:maintheorem-amenablesymmetric-fulldimension}) and (\ref{enu:maintheorem-symmetriclowerbound})
are contained in \foreignlanguage{british}{\cite{Jaerisch11a}. In
the present paper, we have generalised the results essentially in
two ways: }Firstly, Theorem \ref{thm:maintheorem-limitset}\foreignlanguage{british}{
applies to arbitrary conformal graph directed Markov systems and secondly,
Theorem \ref{thm:maintheorem} allows us to investigate intersections
of the radial limit set parametrised by a normal subgroup with level
sets of a multifractal decomposition with respect to a H\"older continuous
potential. Moreover, the result stated in Theorem \ref{thm:maintheorem-limitset}
(\ref{enu:maintheorem-limitset-divtype-full-symmaverage}) is new
even if $\Phi$ consists of  similarities.}

The outline of this paper is as follows. In Section \ref{sec:Thermodynamic-Formalism-for},
we collect the necessary preliminaries on the symbolic thermodynamic
formalism for Markov shifts. In Section \ref{sec:Graph-Directed-Markov},
we give the definition of conformal graph directed Markov systems
associated to free groups and their radial limit sets, and we develop
a multifractal formalism in this context. In Section \ref{sec:Group-Extended-Markov}
we give results on amenability and recurrence for group-extended Markov
systems, from which we deduce our main results in Section \ref{sec:Proof}. 

\selectlanguage{british}%

\section{Thermodynamic Formalism for Markov shifts\label{sec:Thermodynamic-Formalism-for}}

Throughout, the state space of the thermodynamic formalism will be
a \emph{Markov shift} $\Sigma$ given by 
\[
\Sigma:=\left\{ \tau:=\left(\tau_{1},\tau_{2},\ldots\right)\in I^{\N}:\,\, a\left(\tau_{i},\tau_{i+1}\right)=1\mbox{ for all }i\in\N\right\} ,
\]
where $I$ denotes a finite or countable \emph{alphabet}, the matrix
$A=\left(a(i,j)\right)\in\left\{ 0,1\right\} ^{I\times I}$ is the
\emph{incidence matrix} and the \emph{left shift map} $\sigma:\Sigma\rightarrow\Sigma$
is given by $\sigma\left(\tau_{1},\tau_{2},\dots\right):=\left(\tau_{2},\tau_{3},\dots\right)$,
for each $\tau\in\Sigma$. We denote by $\Sigma^{n}:=\left\{ \omega\in I^{n}:\,\, a\left(\omega_{i},\omega_{i+1}\right)=1,1\le i\le n-1\right\} $
the set of \emph{$A$-admissible words }of length $n\in\mathbb{N}$.
The set of $A$-admissible words of arbitrary length is given by $\Sigma^{*}:=\bigcup_{n\in\N}\Sigma^{n}$.
We define the \emph{word length function} $\left|\cdot\right|:\,\Sigma^{*}\cup\Sigma\rightarrow\N\cup\left\{ \infty\right\} $,
where for $\omega\in\Sigma^{*}$ we set $\left|\omega\right|$ to
be the unique $n\in\N$ such that $\omega\in\Sigma^{n}$ and for $\omega\in\Sigma$
we set $\left|\omega\right|:=\infty$. For each $\omega\in\Sigma^{*}\cup\Sigma$
and $n\in\N$ with $1\le n\le\left|\omega\right|$, we define $\omega_{|n}:=\left(\omega_{1},\dots,\omega_{n}\right)$.
For $\tau,\tau'\in\Sigma$, we let $\tau\wedge\tau':=\tau_{|l}$,
where $l:=\sup\bigl\{ n\in\N:\tau_{|n}=\tau'_{|n}\bigr\}$. For $n\in\N_{0}$
and $\omega\in\Sigma^{n}$, the \emph{cylinder set $\left[\omega\right]$}
is given by $\left[\omega\right]:=\left\{ \tau\in\Sigma:\tau_{|n}=\omega\right\} $. 

If $\Sigma$ is the Markov shift with alphabet $I$ whose incidence
matrix consists entirely of $1$s, then we have that $\Sigma=I^{\N}$
and $\Sigma^{n}=I^{n}$, for all $n\in\N$. Then we set $I^{*}:=\Sigma^{*}$.
For $\omega,\omega'\in I^{*}$ we denote by $\omega\omega'\in I^{*}$
the \emph{concatenation} of $\omega$ and $\omega'$, which is defined
by $\omega\omega':=\bigl(\omega_{1},\dots,\omega_{\left|\omega\right|},\omega'_{1},\dots,\omega'_{\left|\omega'\right|}\bigr)$
for $\omega,\omega'\in I^{*}$. Note that $I^{*}$ forms a semigroup
with respect to the concatenation operation. The semigroup $I^{*}$
is the free semigroup generated by $I$ and satisfies the universal
property that, for each semigroup $S$ and for every map $u:I\rightarrow S$,
there exists a unique semigroup homomorphism $\hat{u}:I^{*}\rightarrow S$
such that $\hat{u}\left(i\right)=u\left(i\right)$, for all $i\in I$. 

We equip $I^{\N}$ with the product topology of the discrete topology
on $I$. The Markov shift $\Sigma\subset I^{\N}$ is equipped with
the subspace topology. A countable basis of this topology on $\Sigma$
is given by the cylinder sets $\left\{ \left[\omega\right]:\omega\in\Sigma^{*}\right\} $.
We will make use of the following metrics generating the product topology
on $\Sigma$. For $\alpha>0$ fixed, we define the metric $d_{\alpha}:\Sigma\times\Sigma\rightarrow\R$
on $\Sigma$ given by 
\[
d_{\alpha}\left(\tau,\tau'\right):=\e^{-\alpha\left|\tau\wedge\tau'\right|},\mbox{ for all }\tau,\tau'\in\Sigma.
\]

For a function $\varphi:\Sigma\rightarrow\R$ and $n\in\N_{0}$ we
use the notation $S_{n}\varphi:\Sigma\rightarrow\R$ to denote the
\emph{ergodic sum} of $\varphi$ with respect to $\sigma$, in other
words, $S_{n}\varphi:=\sum_{i=0}^{n-1}\varphi\circ\sigma^{i}$. 

We say that a function $\varphi:\Sigma\rightarrow\R$ is \emph{$\alpha$-H\"older
continuous}, for some $\alpha>0$, if 
\[
V_{\alpha}\left(\varphi\right):=\sup_{n\ge1}\left\{ V_{\alpha,n}\left(\varphi\right)\right\} <\infty,
\]
where for each $n\in\N$ we let 
\[
V_{\alpha,n}\left(\varphi\right):=\sup\left\{ \frac{\left|\varphi\left(\tau\right)-\varphi\left(\tau'\right)\right|}{d_{\alpha}\left(\tau,\tau'\right)}:\tau,\tau'\in\Sigma,\left|\tau\wedge\tau'\right|\ge n\right\} .
\]
The function \emph{$\varphi$ }is\emph{ H\"older continuous} if there
exists $\alpha>0$ such that $\varphi$ is $\alpha$-H\"older continuous. 

\selectlanguage{english}%
The following fact is well-known.
\begin{fact}
[\cite{MR2003772}, Lemma 2.3.1] \label{fact-bounded-distortion-property}If
$\varphi:\Sigma\rightarrow\R$ is H\foreignlanguage{british}{\"o}lder
continuous, then there exists a constant $C_{\varphi}>0$ such that,
for all $\omega\in\Sigma^{*}$ and $\tau,\tau'\in\left[\omega\right]$,
we have 
\[
\left|S_{\left|\omega\right|}\varphi\left(\tau\right)-S_{\left|\omega\right|}\varphi\left(\tau'\right)\right|\le C_{\varphi}.
\]

\end{fact}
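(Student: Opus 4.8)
The plan is to use the $\alpha$-H\"older continuity directly, exploiting the fact that shifting two points from the same cylinder keeps them close, with their agreement length decreasing by exactly one at each step. First I would fix $\alpha>0$ such that $\varphi$ is $\alpha$-H\"older continuous, so that $V_{\alpha}(\varphi)<\infty$ and hence $\left|\varphi(\tau)-\varphi(\tau')\right|\le V_{\alpha}(\varphi)\,\e^{-\alpha\left|\tau\wedge\tau'\right|}$ for all $\tau,\tau'\in\Sigma$. Writing $n:=\left|\omega\right|$ and expanding the ergodic sums term by term, I obtain
\[
\bigl|S_{n}\varphi(\tau)-S_{n}\varphi(\tau')\bigr|\le\sum_{i=0}^{n-1}\bigl|\varphi(\sigma^{i}(\tau))-\varphi(\sigma^{i}(\tau'))\bigr|.
\]

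The key step is the observation that, for $\tau,\tau'\in\left[\omega\right]$, the two points agree in their first $n$ coordinates, so after applying $\sigma^{i}$ the images $\sigma^{i}(\tau)$ and $\sigma^{i}(\tau')$ still agree in their first $n-i$ coordinates; that is, $\left|\sigma^{i}(\tau)\wedge\sigma^{i}(\tau')\right|\ge n-i$. Feeding this into the H\"older estimate bounds the $i$-th summand by $V_{\alpha}(\varphi)\,\e^{-\alpha(n-i)}$, which decays geometrically in $n-i$.

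Finally I would sum the geometric series: reindexing by $k=n-i$ gives
\[
\sum_{i=0}^{n-1}\e^{-\alpha(n-i)}=\sum_{k=1}^{n}\e^{-\alpha k}\le\frac{\e^{-\alpha}}{1-\e^{-\alpha}}=\frac{1}{\e^{\alpha}-1},
\]
a bound independent of $n$ and of $\omega$. Hence the constant $C_{\varphi}:=V_{\alpha}(\varphi)/(\e^{\alpha}-1)$ works uniformly over all admissible words. There is no genuine obstacle here: the whole argument rests on the single geometric fact that the agreement length of the shifted points decreases linearly, turning the H\"older modulus into a summable geometric tail. The only point to keep in mind is that the resulting bound must be \emph{uniform} in $n$, which is precisely why the convergence of $\sum_{k}\e^{-\alpha k}$ is what matters.
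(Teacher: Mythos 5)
Your proof is correct and is exactly the standard telescoping argument behind \cite[Lemma 2.3.1]{MR2003772}, which the paper cites without reproving: expand the ergodic sums, use that $\left|\sigma^{i}(\tau)\wedge\sigma^{i}(\tau')\right|\ge\left|\omega\right|-i\ge1$ for $0\le i\le\left|\omega\right|-1$ (so the H\"older estimate with constant $V_{\alpha}(\varphi)$ applies to every summand), and sum the geometric tail to get the uniform constant $C_{\varphi}=V_{\alpha}(\varphi)/(\e^{\alpha}-1)$. No gaps; in particular you correctly stay within the range where the agreement length is at least one, which is all the definition of $V_{\alpha}(\varphi)$ guarantees.
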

\selectlanguage{british}%
We will make use of the following notion of pressure introduced in
\cite[Definition 1.1]{JaerischKessebohmer10}.
\begin{defn}
[Induced topological pressure]\label{def:induced-topological-pressure}For
$\varphi,\Delta:\Sigma\rightarrow\R$ with $\Delta\ge0$, and $\mathcal{C}\subset\Sigma^{*}$
we define for $\eta>0$ the $\Delta$\emph{-induced pressure of} $\varphi$
(with respect to $\mathcal{C}$) by 
\[
\mathcal{P}_{\Delta}\left(\varphi,\mathcal{C}\right):=\limsup_{T\rightarrow\infty}\frac{1}{T}\log\sum_{{\omega\in\mathcal{C}\atop T-\eta<S_{\omega}\Delta\le T}}\e^{S_{\omega}\varphi},
\]
which takes values in $\R\cup\left\{ \pm\infty\right\} $. In here,
we set $S_{\omega}\varphi:=\sup_{\tau\in\left[\omega\right]}S_{\left|\omega\right|}\varphi\left(\tau\right)$. \end{defn}
\begin{rem*}
It was shown in \cite[Theorem 2.4]{JaerischKessebohmer10} that the
definition of $\mathcal{P}_{\Delta}\left(\varphi,\mathcal{C}\right)$
is in fact independent of the choice of $\eta>0$. For this reason
we do not refer to $\eta>0$ in the definition of the induced pressure. \end{rem*}
\begin{notation*}
If $\Delta$ and/or $\mathcal{C}$ is left out in the notation of
induced pressure, then we tacitly assume that $\Delta=1$ and/or $\mathcal{C}=\Sigma^{*}$.
That is, $\mathcal{P}\left(\varphi\right):=\mathcal{P}_{1}\left(\varphi,\Sigma^{*}\right)$. 
\end{notation*}
We will make use of the following mixing properties for a Markov shift
\foreignlanguage{english}{$\Sigma$ with alphabet $I$.}
\begin{itemize}
\item $\Sigma$ is \emph{irreducible} if, for all $i,j\in I$, there exists
$\omega\in\Sigma^{*}$ such that $i\omega j\in\Sigma^{*}$.
\item $\Sigma$ is \emph{topologically mixing} if, for all $i,j\in I$,
there exists $n_{0}\in\N$ with the property that, for all $n\ge n_{0}$,
there exists $\omega\in\Sigma^{n}$ such that $i\omega j\in\Sigma^{*}$.
\item $\Sigma$ is \emph{finitely irreducible} if there exists a finite
set $F\subset\Sigma^{*}$ with the property that, for all $i,j\in I$,
there exists $\omega\in F$ such that $i\omega j\in\Sigma^{*}$.  
\item $\Sigma$ is \emph{finitely primitive} if there exists $l\in\N$ and
a finite set $F\subset\Sigma^{l}$ with the property that, for all
$i,j\in I$, there exists $\omega\in F$ such that $i\omega j\in\Sigma^{*}$.
 \end{itemize}
\begin{rem*}
Note that $\Sigma$ is finitely primitive if and only if $\Sigma$
is topologically mixing and if $\Sigma$ satisfies the big images
and preimages  property (see \cite{MR1955261}). 
\end{rem*}
The following fact is taken from \cite[Remark 2.11, Remark 2.7]{JaerischKessebohmer10}.
\begin{fact}
\label{fac:criticalexponents-via-pressure}Let $\Sigma$ be finitely
irreducible, $\mathcal{C}\subset\Sigma^{*}$ and let $\varphi,\Delta:\Sigma\rightarrow\R$
with $\Delta\ge c>0$ for some $c>0$. Then we have 
\[
\mathcal{P}_{\Delta}\left(\varphi,\mathcal{C}\right)=\inf\left\{ u\in\R:\mathcal{P}\left(\varphi-u\Delta,\mathcal{C}\right)\le0\right\} =\inf\left\{ u\in\R:\sum_{\omega\in\mathcal{C}}\e^{S_{\omega}\left(\varphi-u\Delta\right)}<\infty\right\} .
\]
If additionally $\card\left(I\right)<\infty$, then $\mathcal{P}_{\Delta}\left(\varphi,\mathcal{C}\right)$
is the unique $u\in\R$ such that $\mathcal{P}\left(\varphi-u\Delta,\mathcal{C}\right)=0$. 
\end{fact}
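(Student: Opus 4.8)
The plan is to identify all three quantities with the abscissa of convergence of the generalised Dirichlet series
\[
D(u):=\sum_{\omega\in\mathcal{C}}\e^{S_{\omega}\varphi-uS_{\omega}\Delta},\qquad u\in\R,
\]
regarded as a function of $u$ with non-negative coefficients $\e^{S_\omega\varphi}$ and exponents $S_\omega\Delta\ge c\vert\omega\vert\to\infty$. First I would invoke the bounded distortion property (Fact~\ref{fact-bounded-distortion-property}): for each fixed $u$ the quantities $S_\omega(\varphi-u\Delta)$ and $S_\omega\varphi-uS_\omega\Delta$ differ by at most $C_\varphi+\vert u\vert C_\Delta$, so $\sum_{\omega\in\mathcal{C}}\e^{S_\omega(\varphi-u\Delta)}<\infty$ exactly when $D(u)<\infty$. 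Hence the rightmost infimum in the statement equals $u_2:=\inf\{u\in\R:D(u)<\infty\}$, and it remains to show $\mathcal{P}_\Delta(\varphi,\mathcal{C})=u_2$ and $u_1:=\inf\{u:\mathcal{P}(\varphi-u\Delta,\mathcal{C})\le0\}=u_2$.

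For the core equality, fix the window $\eta>0$ from the definition of the induced pressure and set, for $k\in\N$,
\[
b_k:=\sum_{\substack{\omega\in\mathcal{C}\\ (k-1)\eta<S_\omega\Delta\le k\eta}}\e^{S_\omega\varphi},
\]
so that, discretising the parameter $T$ along the lattice $\eta\N$ and using the independence of the induced pressure from the window (Theorem~2.4 of \cite{JaerischKessebohmer10}), one has $\mathcal{P}_\Delta(\varphi,\mathcal{C})=\limsup_{k\to\infty}(k\eta)^{-1}\log b_k$ and therefore $\limsup_k b_k^{1/k}=\e^{\eta\,\mathcal{P}_\Delta(\varphi,\mathcal{C})}$. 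Grouping $D(u)$ according to these windows and bounding $\e^{-uS_\omega\Delta}$ between $\e^{-uk\eta}$ and $\e^{\vert u\vert\eta}\e^{-uk\eta}$ on the $k$-th block yields
\[
\sum_{k\ge1}b_k\e^{-uk\eta}\le D(u)\le\e^{\vert u\vert\eta}\sum_{k\ge1}b_k\e^{-uk\eta},
\]
so $D(u)<\infty$ if and only if $\sum_k b_k\e^{-uk\eta}<\infty$. Applying the root test to this power series in $\e^{-u\eta}$ shows convergence precisely when $\e^{-u\eta}\limsup_k b_k^{1/k}<1$, i.e.\ when $u>\mathcal{P}_\Delta(\varphi,\mathcal{C})$, and divergence when $u<\mathcal{P}_\Delta(\varphi,\mathcal{C})$. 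Hence $u_2=\mathcal{P}_\Delta(\varphi,\mathcal{C})$.

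To obtain $u_1=u_2$ I would use that $u\mapsto\mathcal{P}(\varphi-u\Delta,\mathcal{C})$ is strictly decreasing: since $\mathcal{P}(\psi+a,\mathcal{C})=\mathcal{P}(\psi,\mathcal{C})+a$ for constants $a$ and $\Delta\ge c$, one gets $\mathcal{P}(\varphi-u\Delta,\mathcal{C})\ge\mathcal{P}(\varphi-u'\Delta,\mathcal{C})+(u'-u)c$ for $u<u'$. Writing $Z_n:=\sum_{\omega\in\mathcal{C},\,\vert\omega\vert=n}\e^{S_\omega\psi}$ for a potential $\psi$, convergence of $\sum_\omega\e^{S_\omega\psi}$ forces $Z_n\to0$ and hence $\mathcal{P}(\psi,\mathcal{C})\le0$, while $\mathcal{P}(\psi,\mathcal{C})<0$ forces geometric decay of the $Z_n$ and hence convergence; applying this with $\psi=\varphi-u\Delta$ gives $\{u:\mathcal{P}(\varphi-u\Delta,\mathcal{C})<0\}\subseteq\{u:D(u)<\infty\}\subseteq\{u:\mathcal{P}(\varphi-u\Delta,\mathcal{C})\le0\}$, and strict monotonicity makes the outer two infima coincide, giving $u_1=u_2$. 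If moreover $\card(I)<\infty$, then each $Z_n$ is a finite sum, so $g(u):=\mathcal{P}(\varphi-u\Delta,\mathcal{C})$ is finite and convex, hence continuous, strictly decreasing, with $g(u)\to\mp\infty$ as $u\to\pm\infty$; consequently $g$ has a unique zero, which equals $u_1=\mathcal{P}_\Delta(\varphi,\mathcal{C})$.

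The main obstacle is the core step $u_2=\mathcal{P}_\Delta(\varphi,\mathcal{C})$: one must pass rigorously from the sliding-window exponential growth rate defining the induced pressure (with continuous parameter $T$) to the discretised series $\sum_k b_k\e^{-uk\eta}$, and it is precisely here that the independence of the induced pressure from the window parameter $\eta$ is needed to justify evaluating the $\limsup$ along the lattice $\eta\N$. One should also dispose of the degenerate case in which $D(u)=\infty$ for every $u$ (equivalently $\mathcal{P}_\Delta(\varphi,\mathcal{C})=+\infty$), where all three quantities are $+\infty$ and the asserted identities hold trivially; finite irreducibility of $\Sigma$ enters only to guarantee the standard properties of the pressure used above.
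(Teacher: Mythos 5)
The paper does not prove this Fact at all: it is imported verbatim from \cite[Remarks 2.7 and 2.11]{JaerischKessebohmer10}, so your argument can only be measured against the natural proof in that reference, which it essentially reconstructs. Your three-step scheme is the right one and is carried out correctly: identifying $\mathcal{P}_{\Delta}\left(\varphi,\mathcal{C}\right)$ with the abscissa of convergence by discretising $T$ along $\eta\N$ (the two adjacent lattice windows covering $(T-\eta,T]$, or equivalently a window of size $2\eta$ plus the $\eta$-independence of \cite[Theorem 2.4]{JaerischKessebohmer10}, make this rigorous, and you correctly flag this as the delicate point) and then applying the root test; the sandwich $\left\{ u:\mathcal{P}\left(\varphi-u\Delta,\mathcal{C}\right)<0\right\} \subset\left\{ u:\mbox{convergence}\right\} \subset\left\{ u:\mathcal{P}\left(\varphi-u\Delta,\mathcal{C}\right)\le0\right\} $ together with the slope bound $-c$ for the middle equality; and convexity, continuity and the limits $\mp\infty$ for the unique zero when $\card\left(I\right)<\infty$. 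Two small points: your displayed two-sided bound $\sum_{k}b_{k}\e^{-uk\eta}\le D(u)\le\e^{\left|u\right|\eta}\sum_{k}b_{k}\e^{-uk\eta}$ is literally correct only for $u\ge0$ (for $u<0$ the factor moves to the other side), though the convergence equivalence you draw from it is unaffected; and the uniqueness argument tacitly needs $\mathcal{C}\cap\Sigma^{n}\neq\emptyset$ for infinitely many $n$ and $\varphi$ bounded, so that $g$ is finite --- both hold in every application the paper makes.

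The one substantive issue is your very first step: Fact \ref{fact-bounded-distortion-property} requires $\varphi$ and $\Delta$ to be H\"older continuous, whereas the Fact under review assumes only $\varphi,\Delta:\Sigma\rightarrow\R$ with $\Delta\ge c>0$. This is not a cosmetic shortcut. The left-hand quantity couples the weight $S_{\omega}\varphi$ to the window variable $S_{\omega}\Delta=\sup_{\tau\in\left[\omega\right]}S_{\left|\omega\right|}\Delta\left(\tau\right)$, while the other two quantities involve $S_{\omega}\left(\varphi-u\Delta\right)$, and when $\mathrm{osc}_{\left[\omega\right]}\bigl(S_{\left|\omega\right|}\Delta\bigr)$ grows linearly in $\left|\omega\right|$ the first equality can genuinely fail: on the full shift on two symbols take $\varphi=0$ and $\Delta=c+(M-c)\chi_{A}$ with $M>c$, where $\chi_{A}$ is the indicator function of the shift-invariant set $A$ of sequences that are eventually constant equal to one; then every $n$-cylinder meets both $A$ and its complement, so $S_{\omega}\Delta=Mn$ while $\inf_{\left[\omega\right]}S_{n}\Delta=cn$, giving $\mathcal{P}_{\Delta}\left(0,\Sigma^{*}\right)=\left(\log2\right)/M$ but both infima equal to $\left(\log2\right)/c$. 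So some $o(n)$ control of the oscillations of ergodic sums is genuinely necessary for the statement, and your appeal to bounded distortion is exactly where that hypothesis enters. Your proof therefore establishes the Fact for H\"older continuous $\varphi$ and $\Delta$, which covers every use made in this paper (the potentials in play are $\psi$, the geometric potential $\zeta$, and constants, all H\"older), but you should state the regularity hypothesis explicitly rather than let Fact \ref{fact-bounded-distortion-property} smuggle it in.
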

\selectlanguage{english}%
The following notion of a Gibbs measure is fundamental for the thermodynamic
formalism (cf. \cite{MR0289084}, \cite{bowenequilibriumMR0442989}).
\begin{defn}
[Gibbs measure]\label{gibbs-measure}Let $\varphi:\Sigma\rightarrow\R$
be  continuous. We say that a Borel probability measure \emph{$\mu$
is a Gibbs measure for $\varphi$ }if there exists a constant $C_{\mu}>0$
such that 
\begin{equation}
C_{\mu}^{-1}\le\frac{\mu\left(\left[\omega\right]\right)}{\e^{S_{\left|\omega\right|}\varphi\left(\tau\right)-\left|\omega\right|\mathcal{P}\left(\varphi\right)}}\le C_{\mu},\mbox{ for all }\omega\in\Sigma^{*}\mbox{ and }\tau\in\left[\omega\right].\label{eq:gibbs-equation}
\end{equation}

\end{defn}
The following theorem is proved in \cite[Section 2]{MR2003772}. 
\begin{thm}
[Existence of Gibbs measures]\label{thm:existence-of-gibbs-measures}Let
$\Sigma$ be finitely irreducible and let $\varphi:\Sigma\rightarrow\R$
be H\"older continuous with $\mathcal{P}\left(\varphi\right)<\infty$.
Then there exists a unique $\sigma$-invariant Gibbs measure for $\varphi$. 
\end{thm}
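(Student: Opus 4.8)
The plan is to run the Ruelle--Perron--Frobenius transfer operator machinery of \cite{MR2003772} in the countable-alphabet setting. Set $\lambda:=\e^{\mathcal{P}(\varphi)}$ and define, for bounded continuous $f:\Sigma\to\R$,
\[
\mathcal{L}_\varphi f(\tau):=\sum_{i\in I:\,a(i,\tau_1)=1}\e^{\varphi(i\tau)}f(i\tau).
\]
First I would check that $\mathcal{L}_\varphi$ is a well-defined bounded operator: by Fact \ref{fac:criticalexponents-via-pressure} the hypothesis $\mathcal{P}(\varphi)<\infty$ is equivalent to $\sum_{\omega\in\Sigma^{*}}\e^{S_\omega\varphi-u|\omega|}<\infty$ for every $u>\mathcal{P}(\varphi)$, and restricting to words of length one together with finite irreducibility yields $\sum_{i\in I}\e^{\sup_{[i]}\varphi}<\infty$, which bounds $\lVert\mathcal{L}_\varphi\mathbf{1}\rVert_\infty$ and, upon iteration, identifies $\lambda$ as the exponential growth rate of $\mathcal{L}_\varphi^{n}\mathbf{1}$. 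Next I would produce a Borel probability eigenmeasure $\nu$ for the dual operator with $\mathcal{L}_\varphi^{*}\nu=\lambda\nu$, via a Schauder--Tychonoff fixed-point argument applied to the normalized map $\mu\mapsto\mathcal{L}_\varphi^{*}\mu/(\mathcal{L}_\varphi^{*}\mu)(\mathbf{1})$ on the Borel probability measures of $\Sigma$; here the summability $\sum_{i}\e^{\sup_{[i]}\varphi}<\infty$ supplies the tightness that keeps weak-$*$ limit points probability measures even when $I$ is infinite. Using the eigenmeasure equation, the bounded distortion property (Fact \ref{fact-bounded-distortion-property}), and finite irreducibility to connect cylinders in both directions, I would then verify the two-sided estimate
\[
C^{-1}\le\frac{\nu([\omega])}{\e^{S_{|\omega|}\varphi(\tau)-|\omega|\log\lambda}}\le C,\qquad\omega\in\Sigma^{*},\ \tau\in[\omega],
\]
so that $\nu$ is already a Gibbs measure, albeit not yet $\sigma$-invariant.

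To upgrade $\nu$ to an invariant Gibbs measure I would construct a continuous eigenfunction $h$ with $\mathcal{L}_\varphi h=\lambda h$ and $0<\inf h\le\sup h<\infty$. Taking the Ces\`{a}ro averages $h_{n}:=n^{-1}\sum_{k=0}^{n-1}\lambda^{-k}\mathcal{L}_\varphi^{k}\mathbf{1}$, the Gibbs bounds for $\nu$ give uniform upper and lower bounds, while H\"older continuity of $\varphi$ together with Fact \ref{fact-bounded-distortion-property} gives equicontinuity on each cylinder; an Arzel\`{a}--Ascoli extraction along a subsequence then yields $h$, and finite irreducibility forces $\inf h>0$. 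Setting $\mu:=h\nu$ (suitably normalized), the intertwining identity $\mathcal{L}_\varphi\bigl((f\circ\sigma)\,g\bigr)=f\,\mathcal{L}_\varphi g$ combined with $\mathcal{L}_\varphi^{*}\nu=\lambda\nu$ and $\mathcal{L}_\varphi h=\lambda h$ gives $\int f\circ\sigma\,d\mu=\lambda^{-1}\int f\,\mathcal{L}_\varphi h\,d\nu=\int f\,d\mu$, so $\mu$ is $\sigma$-invariant; and since $h$ is bounded away from $0$ and $\infty$, $\mu$ inherits the Gibbs inequality \eqref{eq:gibbs-equation} from $\nu$ with adjusted constants.

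For uniqueness I would argue that any two $\sigma$-invariant Gibbs measures $\mu_{1},\mu_{2}$ for $\varphi$ are comparable on all cylinders by \eqref{eq:gibbs-equation}, hence mutually absolutely continuous with Radon--Nikodym derivative bounded above and below; the Gibbs property together with topological mixing (guaranteed by finite irreducibility) yields ergodicity of each $\mu_{i}$, and two boundedly equivalent invariant probability measures, one of which is ergodic, must coincide, so $\mu_{1}=\mu_{2}$. I expect the main obstacle to lie entirely in the infinite-alphabet features of the eigenmeasure and eigenfunction steps: weak-$*$ compactness fails on the non-compact space $\Sigma$, so one must extract tightness from $\mathcal{P}(\varphi)<\infty$, and the lower bound $\inf h>0$ (equivalently the lower Gibbs bound for $\nu$) genuinely needs the finite connecting set provided by \emph{finite} irreducibility rather than mere irreducibility. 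Everything else is the classical compact-alphabet argument of Ruelle and Bowen transported to the countable setting as carried out in \cite{MR2003772}.
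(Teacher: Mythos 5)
The paper does not prove this theorem inline: it is quoted from \cite[Section 2]{MR2003772}, and your outline is precisely the Ruelle--Perron--Frobenius route carried out there --- deriving $\sum_{i\in I}\e^{\sup_{[i]}\varphi}<\infty$ from $\mathcal{P}(\varphi)<\infty$ via Fact \ref{fac:criticalexponents-via-pressure} (this step is correct, and is exactly the summability hypothesis under which Mauldin and Urba\'nski run the transfer operator), a dual eigenmeasure with Gibbs estimates, an eigenfunction via Ces\`aro averages and Arzel\`a--Ascoli, the invariant measure $h\nu$ through the intertwining identity, and uniqueness via bounded equivalence plus ergodicity. Most of the sketch is sound and the division of labour matches the cited source.

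There are, however, two points to repair. The genuine error is in the uniqueness step: topological mixing is \emph{not} guaranteed by finite irreducibility. By the paper's own definitions these are distinct notions (the paper's remark identifies finitely primitive, not finitely irreducible, with topological mixing plus the big images and preimages property); for instance the two-letter shift with $a(1,2)=a(2,1)=1$ and $a(1,1)=a(2,2)=0$ is finitely irreducible but has period two, so an ergodicity argument routed through mixing fails to cover cases the theorem includes. The standard repair needs only irreducibility: from the Gibbs inequality \eqref{eq:gibbs-equation} and the finite connecting family $F$ one obtains $c>0$ with $\mu\left(\left[\omega\right]\cap A\right)\ge c\,\mu\left(\left[\omega\right]\right)\mu\left(A\right)$ for every cylinder and every $\sigma$-invariant set $A$ (insert a connecting word $\gamma\in F$ and use $\sigma^{-k}A=A$), and taking $A^{c}$ in place of $\left[\omega\right]$ after approximation gives $\mu\left(A^{c}\right)\mu\left(A\right)=0$. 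Secondly, your Schauder--Tychonoff sentence is not yet a proof, since the probability measures on the non-compact space $\Sigma$ are not weak-$*$ compact and one must exhibit a compact convex set invariant under the normalized dual map; this can be done, because finite irreducibility yields the uniform lower bound $\inf_{\tau}\mathcal{L}_{\varphi}\mathbf{1}\left(\tau\right)\ge\min_{b}\e^{\sup_{[b]}\varphi-C_{\varphi}}>0$, the minimum running over the finitely many letters occurring in $F$ (for each $\tau_{1}$ the last letter of a connecting word is an admissible predecessor), which bounds the normalizer $\left(\mathcal{L}_{\varphi}^{*}\mu\right)\left(\mathbf{1}\right)$ below, while summability gives uniform coordinatewise tail bounds for $\mathcal{L}_{\varphi}^{*}\mu$; alternatively \cite{MR2003772} sidesteps the issue by exhausting $I$ with finite subalphabets containing the letters of $F$ and passing to a tight weak-$*$ limit of the finite-system eigenmeasures, using the exhaustion $\mathcal{P}\left(\varphi\right)=\sup_{F'\,\mathrm{finite}}\mathcal{P}\left(\varphi,\Sigma^{*}\cap\left(F'\right)^{*}\right)$. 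You correctly flagged this as the main obstacle, but as written the fixed-point step and the mixing claim are the two places where the argument, taken literally, would not go through.
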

\selectlanguage{british}%

\section{Graph Directed Markov Systems associated to Free Groups\label{sec:Graph-Directed-Markov}}

In this section first recall the definition of a conformal graph directed
Markov system (GDMS) introduced by Mauldin and \foreignlanguage{english}{Urba\'nski}
(\cite{MR2003772}). Then we give the definition of a GDMS associated
to a free group and the radial limit set.

\subsection{Preliminaries\label{sub:Preliminaries}}
\begin{defn}
\label{def-gdms}A \emph{graph directed Markov system (GDMS)} $\Phi=\left(V,\left(X_{v}\right)_{v\in V},E,i,t,\left(\phi_{e}\right)_{e\in E},A\right)$
consists of a finite vertex set $V$, a family of nonempty compact
metric spaces $\left(X_{v}\right)_{v\in V}$, a countable edge set
$E$, maps $i,t:E\rightarrow V$, a family of injective contractions
$\phi_{e}:X_{t\left(e\right)}\rightarrow X_{i\left(e\right)}$ with
Lipschitz constants bounded by some $0<s<1$, and an edge incidence
matrix $A\in\left\{ 0,1\right\} ^{E\times E}$, which has the property
that $a\left(e,f\right)=1$ implies $t\left(e\right)=i\left(f\right)$,
for all $e,f\in E$. The \emph{coding map }of $\Phi$ is given by
\[
\pi_{\Phi}:\Sigma_{\Phi}\rightarrow\oplus_{v\in V}X_{v},\,\,\mbox{ such that }\bigcap_{n\in\N}\phi_{\tau_{|n}}\left(X_{t\left(\tau_{n}\right)}\right)=\left\{ \pi_{\Phi}\left(\tau\right)\right\} ,\mbox{ for each }\tau\in\Sigma_{\Phi},
\]
where $\oplus_{v\in V}X_{v}$ denotes the disjoint union of the sets
$X_{v}$ and $\Sigma_{\Phi}$ denotes the Markov shift with alphabet
set $E$ and incidence matrix $A$. The \emph{limit set of $\Phi$}
is defined by $J\left(\Phi\right):=\pi_{\Phi}\left(\Sigma_{\Phi}\right)$.
Further, we set $J^{*}\left(\Phi\right):=\bigcup_{F\subset E,\,\card\left(F\right)<\infty}\pi_{\Phi}\left(\Sigma_{\Phi}\cap F^{\N}\right)$. 
\end{defn}

\begin{defn}
[Conformal GDMS, \cite{MR2003772}]\label{def:cgdms}The GDMS $\Phi=\left(V,\left(X_{v}\right)_{v\in V},E,i,t,\left(\phi_{e}\right)_{e\in E},A\right)$
is called \emph{conformal }if the following conditions are satisfied.

\renewcommand{\theenumi}{\alph{enumi}}
\begin{enumerate}
\item \label{enu:cgdms-a-phasespace}For $v\in V$, the \emph{phase space}
$X_{v}$ is a compact connected subset of the Euclidean space $\left(\R^{D},\Vert\cdot\Vert\right)$,
for some $D\geq1$, such that $X_{v}$ is equal to the closure of
its interior, that is $X_{v}=\overline{\Int(X_{v})}$. 
\item \textit{\emph{\label{enu:cgdms-b-osc}(}}\textit{Open set condition
}\textit{\emph{(OSC))}} For all $a,b\in E$ with $a\ne b$, we have
that 
\[
\phi_{a}\left(\Int(X_{t\left(a\right)})\right)\cap\phi_{b}\left(\Int(X_{t\left(b\right)})\right)=\emptyset.
\]

\item \label{enu:cgdms-c-conformalextension}For each vertex $v\in V$ there
exists an open connected set $W_{v}\supset X_{v}$ such that the map
$\phi_{e}$ extends to a $C^{1}$ conformal diffeomorphism of $W_{v}$
into $W_{i\left(e\right)}$, for every $e\in E$ with $t\left(e\right)=v$. 
\item \textit{\emph{\label{enu:cgdms-d-coneproperty}(}}\textit{Cone property}\textit{\emph{)}}
There exist $l>0$ and $0<\gamma<\pi/2$ such that, for each $v\in V$
and $x\in X_{v}\subset\R^{D}$ there exists an open cone $\Con(x,\gamma,l)\subset\Int(X_{v})$
with vertex $x$, central angle of measure $\gamma$ and altitude
$l$. 
\item \label{enu:cgdms-e-hoelderderivative}There are two constants $L\geq1$
and $\alpha>0$ such that for each $e\in E$ and $x,y\in X_{t\left(e\right)}$
we have 
\[
\left|\left|\phi_{e}'(y)\right|-\left|\phi_{e}'(x)\right|\right|\leq L\inf_{u\in W_{t\left(e\right)}}\left|\phi_{e}'\left(u\right)\right|\Vert y-x\Vert^{\alpha},
\]
where $\vert\cdot\vert$ refers to the operator norm of a bounded
linear operator on $\left(\R^{D},\Vert\cdot\Vert\right)$. 
\end{enumerate}
\end{defn}
\begin{lem}
[\cite{MR2003772}, Lemma 4.2.2] \label{lem:geometricpotential-is-hoelder}If
$\Phi$ is a conformal GDMS, then for all $\omega\in\Sigma_{\Phi}^{*}$
and for all $x,y\in W_{t\left(\omega\right)}$, we have 
\begin{equation}
\left|\log\left|\phi_{\omega}'(x)\right|-\log\left|\phi_{\omega}'(y)\right|\right|\le\frac{L}{1-s}\Vert x-y\Vert^{\alpha}.\label{eq:log-varphi-prime-hoelder}
\end{equation}
 \end{lem}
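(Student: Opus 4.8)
The plan is to reduce the estimate for the composition $\phi_{\omega}$ to the corresponding estimate for the single generators $\phi_{e}$, and then to exploit the uniform contraction ratio $s$ in order to sum the resulting per-letter errors as a convergent geometric series. The two ingredients are therefore (i) a multiplicative description of $\phi_{\omega}'$ and (ii) a single-map distortion bound coming from condition (\ref{enu:cgdms-e-hoelderderivative}).

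First I would record the multiplicative structure. Writing $\omega=\omega_{1}\cdots\omega_{n}\in\Sigma_{\Phi}^{n}$, so that $\phi_{\omega}=\phi_{\omega_{1}}\circ\cdots\circ\phi_{\omega_{n}}$, I introduce for $x\in W_{t(\omega)}$ and $1\le j\le n$ the intermediate points
\[
x_{j}:=\phi_{\omega_{j+1}}\circ\cdots\circ\phi_{\omega_{n}}(x),\qquad x_{n}:=x,
\]
and define $y_{j}$ analogously. Because $\Phi$ is conformal, condition (\ref{enu:cgdms-c-conformalextension}), each derivative $\phi_{\omega_{j}}'$ is a scalar multiple of a linear isometry, so operator norms multiply along the chain rule and
\[
\log\bigl|\phi_{\omega}'(x)\bigr|=\sum_{j=1}^{n}\log\bigl|\phi_{\omega_{j}}'(x_{j})\bigr|.
\]
This is the step where conformality is essential: for a general $C^{1}$ system the operator norm of a product of derivatives need not factorise.

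Next I would establish the single-map version: for every $e\in E$ and $x,y\in W_{t(e)}$ one has $\bigl|\log|\phi_{e}'(x)|-\log|\phi_{e}'(y)|\bigr|\le L\,\Vert x-y\Vert^{\alpha}$. Assuming without loss of generality $|\phi_{e}'(y)|\ge|\phi_{e}'(x)|$, I use $\log(1+t)\le t$ together with condition (\ref{enu:cgdms-e-hoelderderivative}) to get
\[
\log\frac{|\phi_{e}'(y)|}{|\phi_{e}'(x)|}\le\frac{|\phi_{e}'(y)|-|\phi_{e}'(x)|}{|\phi_{e}'(x)|}\le\frac{L\,\inf_{u}|\phi_{e}'(u)|\,\Vert x-y\Vert^{\alpha}}{|\phi_{e}'(x)|}\le L\,\Vert x-y\Vert^{\alpha},
\]
the last step using $\inf_{u}|\phi_{e}'(u)|\le|\phi_{e}'(x)|$. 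The passage from $X_{t(e)}$, where (\ref{enu:cgdms-e-hoelderderivative}) is stated, to the extension domain $W_{t(e)}$ is supplied by (\ref{enu:cgdms-c-conformalextension}).

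Finally I would combine the two ingredients. The points $x_{j},y_{j}$ are images of $x,y$ under a composition of $n-j$ contractions of Lipschitz constant at most $s$, whence $\Vert x_{j}-y_{j}\Vert\le s^{\,n-j}\Vert x-y\Vert$. Applying the single-map bound at each $x_{j},y_{j}$ and summing therefore yields
\[
\bigl|\log|\phi_{\omega}'(x)|-\log|\phi_{\omega}'(y)|\bigr|\le\sum_{j=1}^{n}L\,\Vert x_{j}-y_{j}\Vert^{\alpha}\le L\,\Vert x-y\Vert^{\alpha}\sum_{k=0}^{n-1}\bigl(s^{\alpha}\bigr)^{k}\le\frac{L}{1-s^{\alpha}}\,\Vert x-y\Vert^{\alpha},
\]
which is the asserted uniform H\"older estimate, with geometric factor $(1-s^{\alpha})^{-1}$ in place of $(1-s)^{-1}$; the two coincide when $\alpha=1$. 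The only genuine obstacle is uniformity in the word length $n$, and this is precisely what the contraction hypothesis buys: the geometric decay $s^{\,n-j}$ turns the sum of the per-letter H\"older errors into a geometric series whose value is bounded independently of $n$ and of $\omega$. Everything else is routine once the multiplicativity of the conformal derivatives and the single-letter estimate are in place.
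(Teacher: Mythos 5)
Your argument is, in substance, the proof that the paper itself does not spell out: the statement is quoted from Mauldin--Urba\'nski \cite{MR2003772} (Lemma 4.2.2), and the proof given there is exactly your scheme --- factorise $\left|\phi_{\omega}'\right|$ along the chain rule using conformality, derive the one-letter bound $\bigl|\log\left|\phi_{e}'(x)\right|-\log\left|\phi_{e}'(y)\right|\bigr|\le L\Vert x-y\Vert^{\alpha}$ from condition (\ref{enu:cgdms-e-hoelderderivative}) via $\log(1+t)\le t$ and $\inf_{u}\left|\phi_{e}'(u)\right|\le\left|\phi_{e}'(x)\right|$, and sum the per-letter errors against the geometric decay $\Vert x_{j}-y_{j}\Vert\le s^{n-j}\Vert x-y\Vert$. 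So the approach is the same as the cited source's; two caveats are worth recording. First, the constant: as you note, the summation honestly yields $L/(1-s^{\alpha})$, and since $s^{\alpha}\ge s$ for $\alpha\le1$ this is \emph{larger} than the stated $L/(1-s)$, so strictly speaking you prove a weaker inequality than (\ref{eq:log-varphi-prime-hoelder}) as printed; the printed constant comes out of this argument only when $\alpha=1$, which by \cite[Proposition 4.2.1]{MR2003772} (cf.\ Remark \ref{bowen-formula-without-4e}) is automatic for $D\ge2$. This discrepancy is inherited from the way the lemma is usually quoted and is harmless here: every use of the lemma in the paper (Fact \ref{fact-geometricpotential-is-hoelder}, Remark \ref{bowen-formula-without-4e}, Proposition \ref{prop:t-vs-dimension}) needs only the existence of \emph{some} uniform H\"older constant, not its value. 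Second, your one-sentence appeal to condition (\ref{enu:cgdms-c-conformalextension}) to pass the estimate from $X_{t(e)}$ to $W_{t(e)}$ is not an argument: (\ref{enu:cgdms-c-conformalextension}) extends the map, not the H\"older inequality, and likewise the Lipschitz bound $s$ is postulated only on $X_{t(e)}$, whereas your intermediate points $x_{j},y_{j}$ lie in $W_{t(\omega_{j})}$ once $x,y\in W_{t(\omega)}$. For $D\ge2$ this is repaired by \cite[Proposition 4.2.1]{MR2003772}, which gives the estimate on $W_{v}$ with $\alpha=1$; for $D=1$ one must simply read condition (\ref{enu:cgdms-e-hoelderderivative}) and the contraction bound as holding on the extension domains, as is tacit in \cite{MR2003772} and in the paper's own formulation. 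Neither point invalidates your proof; both are inaccuracies of the quoted statement rather than of your argument.
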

\begin{defn}
For a GDMS $\Phi$ satisfying (\ref{enu:cgdms-a-phasespace}) and
(\ref{enu:cgdms-c-conformalextension}) of Definition \ref{def:cgdms},
the\emph{ geometric potential $\zeta:\Sigma_{\Phi}\rightarrow\R^{-}$}
of $\Phi$ is given by 
\[
\zeta\left(\tau\right):=\log\left|\phi_{\tau_{1}}'\left(\pi_{\Phi}\left(\sigma\left(\tau\right)\right)\right)\right|,\mbox{ for all }\tau\in\Sigma_{\Phi}.
\]
The following fact follows from \cite[Proposition 4.2.7, Lemma 3.1.3]{MR2003772}
and Lemma \ref{lem:geometricpotential-is-hoelder}. \end{defn}
\begin{fact}
\label{fact-geometricpotential-is-hoelder}Suppose that a GDMS $\Phi$
satisfies (\ref{enu:cgdms-a-phasespace}) and (\ref{enu:cgdms-c-conformalextension})
of Definition \ref{def:cgdms} and that the inequality (\ref{eq:log-varphi-prime-hoelder})
in Lemma \ref{lem:geometricpotential-is-hoelder} holds. Then the
geometric potential $\zeta$ of $\Phi$  is H\"older continuous.
In particular, $\zeta$ is H\"older continuous if $\Phi$ is a conformal
GDMS.
\end{fact}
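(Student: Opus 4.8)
The plan is to exhibit a single exponent $\beta>0$ for which the H\"older seminorm $V_{\beta}(\zeta)$ of Section~\ref{sec:Thermodynamic-Formalism-for} is finite; the hypotheses will enter only through Lemma~\ref{lem:geometricpotential-is-hoelder} and through the geometric decay of the diameters of the projected cylinder sets, so nothing further about $\Phi$ is needed.

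First I would fix $\tau,\tau'\in\Sigma_{\Phi}$ with $\left|\tau\wedge\tau'\right|=n\ge1$. Since $n\ge1$ they share their first letter $e:=\tau_{1}=\tau_{1}'$, and writing $x:=\pi_{\Phi}\left(\sigma\left(\tau\right)\right)$ and $y:=\pi_{\Phi}\left(\sigma\left(\tau'\right)\right)$ the definition of $\zeta$ gives $\zeta\left(\tau\right)-\zeta\left(\tau'\right)=\log\left|\phi_{e}'\left(x\right)\right|-\log\left|\phi_{e}'\left(y\right)\right|$. By admissibility of the incidence matrix and the defining property of the coding map, both $x$ and $y$ lie in $X_{t\left(e\right)}\subset W_{t\left(e\right)}$, so Lemma~\ref{lem:geometricpotential-is-hoelder} applied to the one-letter word $\omega=e$ yields
\[
\left|\zeta\left(\tau\right)-\zeta\left(\tau'\right)\right|\le\frac{L}{1-s}\Vert x-y\Vert^{\alpha},
\]
where $\alpha>0$ is the exponent appearing in (\ref{eq:log-varphi-prime-hoelder}).

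Next I would estimate $\Vert x-y\Vert$. The shifted sequences $\sigma\left(\tau\right)$ and $\sigma\left(\tau'\right)$ agree on their first $n-1$ letters; denoting this common prefix by $\nu:=\left(\tau_{2},\dots,\tau_{n}\right)$, both $x$ and $y$ belong to $\phi_{\nu}\left(X_{t\left(\nu\right)}\right)$ (with the convention that $\nu$ is empty and the set is $X_{t\left(e\right)}$ when $n=1$). As each $\phi_{e}$ has Lipschitz constant at most $s<1$, the composition $\phi_{\nu}$ is Lipschitz with constant at most $s^{n-1}$, so $\Vert x-y\Vert\le\diam\phi_{\nu}\left(X_{t\left(\nu\right)}\right)\le s^{n-1}D_{0}$, where $D_{0}:=\max_{v\in V}\diam\left(X_{v}\right)<\infty$ is finite because $V$ is finite and each $X_{v}$ is compact. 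Combining with the previous bound and setting $\beta:=-\alpha\log s>0$, so that $s^{\alpha n}=\e^{-\beta n}=d_{\beta}\left(\tau,\tau'\right)$, I obtain $\left|\zeta\left(\tau\right)-\zeta\left(\tau'\right)\right|\le C\,d_{\beta}\left(\tau,\tau'\right)$ with $C:=L D_{0}^{\alpha}\big/\bigl(\left(1-s\right)s^{\alpha}\bigr)$. As this holds for every pair with $\left|\tau\wedge\tau'\right|\ge1$, it gives $V_{\beta,n}\left(\zeta\right)\le C$ for all $n$ and hence $V_{\beta}\left(\zeta\right)\le C<\infty$, i.e. $\zeta$ is $\beta$-H\"older continuous. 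The final assertion is then immediate: a conformal GDMS satisfies (\ref{enu:cgdms-a-phasespace}) and (\ref{enu:cgdms-c-conformalextension}) and, by Lemma~\ref{lem:geometricpotential-is-hoelder}, also the inequality (\ref{eq:log-varphi-prime-hoelder}).

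I expect the only genuinely delicate point to be the geometric decay $\diam\phi_{\nu}\left(X_{t\left(\nu\right)}\right)\le s^{n-1}D_{0}$: this is where the uniform contraction bound $s<1$ and the finiteness of $V$ are essential (cf. \cite[Proposition 4.2.7, Lemma 3.1.3]{MR2003772}), and one must also check that $x,y$ genuinely lie in the domain $W_{t\left(e\right)}$ of $\phi_{e}$ so that Lemma~\ref{lem:geometricpotential-is-hoelder} is applicable. Everything else is routine bookkeeping, the one notational trap being that the exponent $\alpha$ from (\ref{eq:log-varphi-prime-hoelder}) must be kept distinct from the free parameter of the metric $d_{\beta}$.
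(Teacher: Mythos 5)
Your proof is correct, and it is in substance the same argument as the paper's: the paper disposes of this Fact by citing \cite[Proposition 4.2.7, Lemma 3.1.3]{MR2003772} together with the distortion estimate (\ref{eq:log-varphi-prime-hoelder}), and your computation — applying (\ref{eq:log-varphi-prime-hoelder}) to the single letter $\tau_{1}=\tau_{1}'$ and bounding $\Vert\pi_{\Phi}\left(\sigma\left(\tau\right)\right)-\pi_{\Phi}\left(\sigma\left(\tau'\right)\right)\Vert$ by $s^{n-1}\max_{v\in V}\diam\left(X_{v}\right)$ via the uniform contraction — is exactly the standard argument behind those citations, correctly restricted to pairs with $\left|\tau\wedge\tau'\right|\ge1$ as the paper's (local) H\"older notion requires. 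The edge cases ($n=1$ with empty $\nu$, the membership $x,y\in X_{t\left(e\right)}\subset W_{t\left(e\right)}$ via admissibility) are all handled, so nothing is missing.
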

The following result is taken from \cite[Theorem 4.2.13]{MR2003772},
where finitely primitivity can be replaced by finitely irreducibility
(see also \cite[Theorem 3.7]{MR2413348}). The last equality in Theorem
\ref{thm:cgdms-bowen-formula} follows from Fact \ref{fac:criticalexponents-via-pressure}
because the  geometric potential $\zeta$ of $\Phi$ is bounded away
from zero by $-\log(s)$, where $s$ denotes the uniform bound for
the Lipschitz constants of the generators of $\Phi$ (see Definition
\ref{def-gdms}). 
\begin{thm}
[Generalised Bowen's formula]\label{thm:cgdms-bowen-formula}Let
$\Phi$ be a conformal GDMS with a finitely irreducible incidence
matrix $A$ and geometric potential $\zeta:\Sigma_{\Phi}\rightarrow\R^{-}$.
We then have that 
\[
\dim_{H}\left(J\left(\Phi\right)\right)=\dim_{H}\left(J^{*}\left(\Phi\right)\right)=\inf\left\{ s\in\R:\mathcal{P}\left(s\zeta\right)\le0\right\} =\inf\big\{\beta\in\R:\sum_{\omega\in\Sigma_{\Phi}^{*}}\e^{\beta S_{\omega}\zeta}<\infty\big\}.
\]
\end{thm}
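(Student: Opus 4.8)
The plan is to prove the Generalised Bowen's Formula for conformal GDMS as a chain of three equalities.

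The last equality $\inf\{s : \mathcal{P}(s\zeta)\le 0\} = \inf\{\beta : \sum_{\omega}\e^{\beta S_\omega\zeta} < \infty\}$ is already granted to us: the excerpt explicitly tells us it follows from Fact \ref{fac:criticalexponents-via-pressure}, since $\zeta$ is bounded away from zero by $-\log(s)$. Concretely, I would apply Fact \ref{fac:criticalexponents-via-pressure} with $\varphi = 0$ and $\Delta = -\zeta \ge -\log(s) > 0$, identifying $\mathcal{P}_{-\zeta}(0,\Sigma_\Phi^*) = \inf\{u : \mathcal{P}(-u(-\zeta)) \le 0\} = \inf\{u : \mathcal{P}(u\zeta)\le 0\}$ with the series characterisation on the right. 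So this equality is essentially bookkeeping.

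The substantive content is the first equality $\dim_H(J(\Phi)) = \inf\{s : \mathcal{P}(s\zeta)\le 0\}$, which is the classical Bowen formula cited verbatim from \cite[Theorem 4.2.13]{MR2003772}. Since we are permitted to assume results stated earlier, and this theorem is quoted as the source, the honest proof is simply to invoke it; the excerpt even notes that the finite-primitivity hypothesis there can be relaxed to finite irreducibility via \cite[Theorem 3.7]{MR2413348}. If I were to reconstruct the argument rather than cite it, I would set $\theta := \inf\{s : \mathcal{P}(s\zeta)\le 0\}$ and prove two inequalities. For the upper bound $\dim_H(J(\Phi)) \le \theta$: for any $s > \theta$ we have $\mathcal{P}(s\zeta) < 0$, so by Fact \ref{fac:criticalexponents-via-pressure} the series $\sum_{\omega\in\Sigma_\Phi^*}\e^{s S_\omega\zeta}$ converges; using the conformality estimates (Lemma \ref{lem:geometricpotential-is-hoelder} gives bounded distortion so that $\e^{S_\omega\zeta} \asymp \mathrm{diam}(\phi_\omega(X_{t(\omega)}))^{\ldots}$, up to the conformal derivative comparison $\|\phi_\omega'\| \asymp \e^{S_\omega\zeta}$), the cylinders $\{\phi_\omega(X_{t(\omega)})\}$ of a fixed generation form covers of $J(\Phi)$ whose $s$-dimensional Hausdorff sums are controlled by exactly this series, yielding $\mathcal{H}^s(J(\Phi)) < \infty$ and hence $\dim_H \le s$.

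For the lower bound $\dim_H(J(\Phi)) \ge \theta$, the main obstacle — and the reason the cited theorem does real work — is that $E$ may be infinite, so one cannot directly construct a Frostman/Gibbs measure on all of $J(\Phi)$ at the critical exponent. The standard remedy, which also explains the role of $J^*(\Phi)$ in the middle equality, is exhaustion: for each finite subalphabet $F \subset E$ one has a finitely irreducible (after passing to an irreducible component) subsystem to which the finite-alphabet theory applies, giving a Gibbs measure and the Frostman mass-distribution bound $\dim_H(\pi_\Phi(\Sigma_\Phi \cap F^{\mathbb N})) \ge \theta_F$, where $\theta_F$ is the corresponding critical exponent for the restricted series. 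One then shows $\sup_F \theta_F = \theta$ by monotone convergence on the defining series, and since $J^*(\Phi) = \bigcup_F \pi_\Phi(\Sigma_\Phi\cap F^{\mathbb N})$ and Hausdorff dimension is stable under countable unions, this delivers $\dim_H(J^*(\Phi)) \ge \theta$. Combined with the trivial inclusion $J^*(\Phi)\subseteq J(\Phi)$ and the upper bound above applied to both sets, all three Hausdorff dimensions are squeezed to equal $\theta$, establishing the middle equality $\dim_H(J(\Phi)) = \dim_H(J^*(\Phi))$ simultaneously. I would present this exhaustion step as the crux, and otherwise lean on the quoted \cite[Theorem 4.2.13]{MR2003772} and \cite[Theorem 3.7]{MR2413348} for the technical conformal-measure estimates.
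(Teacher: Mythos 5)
Your proposal matches the paper's treatment of this theorem: the paper gives no independent proof, but simply cites \cite[Theorem 4.2.13]{MR2003772} (with finite primitivity relaxed to finite irreducibility via \cite[Theorem 3.7]{MR2413348}) and derives the final equality from Fact \ref{fac:criticalexponents-via-pressure} using that $\zeta$ is bounded away from zero by $-\log(s)$, exactly as you do with $\varphi=0$ and $\Delta=-\zeta$. One caveat on your optional reconstruction sketch: the identity $\sup_{F}\theta_{F}=\theta$ does not follow from ``monotone convergence on the defining series'' alone (each restricted series can converge at a given $\beta$ while the full series diverges there); it rests on the exhaustion property of the pressure for finitely irreducible shifts, \cite[Theorem 2.1.5]{MR2003772} --- but since your actual proof proceeds by citation, this imprecision does not affect correctness.
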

\begin{rem}
\label{bowen-formula-without-4e}The generalised Bowen's formula also
holds if the GDMS $\Phi$ satisfies (\ref{enu:cgdms-a-phasespace})-(\ref{enu:cgdms-d-coneproperty})
of Definition \ref{def:cgdms} and the inequality (\ref{eq:log-varphi-prime-hoelder})
stated in Lemma \ref{lem:geometricpotential-is-hoelder}. To prove
this, we distinguish two cases. In the case $D\ge2$, it follows from
\cite[Proposition  4.2.1]{MR2003772} that, if $\Phi$ satisfies (\ref{enu:cgdms-a-phasespace})
and (\ref{enu:cgdms-c-conformalextension}) of Definition \ref{def:cgdms},
then $\Phi$ satisfies automatically (\ref{enu:cgdms-e-hoelderderivative})
with $\alpha=1$. If $D=1$ then a closer inspection of the proof
of \cite[Theorem 4.2.13]{MR2003772} shows that Definition \ref{def:cgdms}
(\ref{enu:cgdms-e-hoelderderivative}) is in fact only used to deduce
(\ref{eq:log-varphi-prime-hoelder}) of Lemma \ref{lem:geometricpotential-is-hoelder}
(cf. \cite[Lemma 2.2]{MR1387085}).
\end{rem}

\subsection{Radial limit sets\label{sub:Radial-limit-sets}}

Graph directed Markov systems associated to free groups an their radial
limit sets have been introduced in \cite[Definition 2.10]{Jaerisch11a}. 
\begin{defn}
[GDMS  associated to a free group, (uniformly) radial limit set]\label{def:gdms-associated-to-freegroup-and-radiallimitsets}Let
\emph{$\F_{d}=\langle g_{1},\dots,g_{d}\rangle$ }denote the free
group on $d\ge2$ generators. Let 
\[
I:=\left\{ g_{1},g_{1}^{-1},\dots,g_{d},g_{d}^{-1}\right\} \quad\mbox{and}\quad\Sigma:=\left\{ \tau\in I^{\N}:\tau_{i}\neq\tau_{i+1}^{-1}\right\} .
\]
Let $N$ be a non-trivial normal subgroup $N$ of $\F_{d}$. The \emph{symbolic
radial limit set} \emph{of $N$} and the \emph{symbolic uniformly
radial limit set of $N$} are given by 
\begin{eqnarray*}
\Lambda_{\mathrm{r}}\left(N\right) & := & \left\{ \tau\in\Sigma:\exists h\in\F_{d},\mbox{ such that }\tau_{1}\cdot\dots\cdot\tau_{n}\in hN\mbox{ for infinitely many }n\in\N\right\} \\
\mbox{and}\\
\Lambda_{\mathrm{ur}}\left(N\right) & := & \left\{ \tau\in\Sigma:\exists H\subset\F_{d}\mbox{ finite},\mbox{ such that }\tau_{1}\cdot\dots\cdot\tau_{n}\in HN\mbox{ for all }n\in\N\right\} .
\end{eqnarray*}
The GDMS $\Phi=\left(V,\left(X_{v}\right)_{v\in V},E,i,t,\left(\phi_{e}\right)_{e\in E},A\right)$
is \emph{associated to $\F_{d}=\langle g_{1},\dots,g_{d}\rangle$,}
$d\ge2$, if $V=\left\{ g_{1},g_{1}^{-1},\dots,g_{d},g_{d}^{-1}\right\} $,
$E=\left\{ \left(v,w\right)\in V^{2}:v\neq w^{-1}\right\} $, $i,t:E\rightarrow V$
are given by $i\left(v,w\right)=v$ and $t\left(v,w\right)=w$ and
the incidence matrix $A\in\left\{ 0,1\right\} ^{E\times E}$ satisfies
$a\left(e,f\right)=1$ if and only if $t\left(e\right)=i\left(f\right)$,
for all $e,f\in E$.  We will tacitly apply the canonical bijection
between the spaces 
\[
\Sigma_{\Phi}=\left\{ \left(\left(v_{1},v_{2}\right),\left(v_{2},v_{3}\right),\dots\right)\in\left(V\times V\right)^{\N}:v_{i}\neq v_{i+1}^{-1}\right\} \mbox{ and }\Sigma=\left\{ \left(v_{1},v_{2},\dots\right)\in V^{\N}:v_{i}\neq v_{i+1}^{-1}\right\} .
\]
We call $\pi_{\Phi}\left(\Lambda_{\mathrm{r}}\left(N\right)\right)$
and $\pi_{\Phi}\left(\Lambda_{\mathrm{ur}}\left(N\right)\right)$
the\emph{ radial} \emph{limit set} \emph{of $N$} \emph{with respect
to $\Phi$} and the \emph{uniformly radial limit set }of $N$ \emph{with
respect to $\Phi$. }
\end{defn}

\subsection{The induced GDMS\label{sub:The-induced-GDMS}}

In order to investigate the radial limit set of a normal subgroup
$N$ of $\F_{d}$ with respect to a GDMS $\Phi$ associated to $\F_{d}$,
we introduce an induced GDMS $\tilde{\Phi}$ whose edge set consists
of first return loops in the Cayley graph of $\F_{d}/N$. 
\begin{defn}
\label{N-induced-GDMS}Let $\Phi=\left(V,\left(X_{v}\right)_{v\in V},E,i,t,\left(\phi_{e}\right)_{e\in E},A\right)$
denote a conformal GDMS associated to $\F_{d}$ with $d\ge2$, and
let $N$ denote a non-trivial normal subgroup of $\F_{d}$. The \emph{$N$-induced}
GDMS of $\Phi$ is given by $\tilde{\Phi}:=\left(V,\left(X_{v}\right)_{v\in V},\tilde{E},\tilde{i},\tilde{t},\left(\tilde{\phi}_{\omega}\right)_{\omega\in\tilde{E}},\tilde{A}\right)$,
where the edge set $\tilde{E}$ is given by 
\[
\tilde{E}:=\left\{ \omega\in\Sigma_{\Phi}^{*}:i\left(\omega_{1}\right)\cdot\dots\cdot i\left(\omega_{\left|\omega\right|}\right)\in N\mbox{ and }i\left(\omega_{1}\right)\cdot\dots\cdot i\left(\omega_{k}\right)\notin N\mbox{ for }1\le k<\left|\omega\right|\right\} ,
\]
and the maps $\tilde{i},\tilde{t}:\tilde{E}\rightarrow V$ are \foreignlanguage{english}{given
by} $\tilde{i}\left(\omega\right):=i\left(\omega_{1}\right)$ and
\foreignlanguage{english}{$\tilde{t}\left(\omega\right):=t\left(\omega_{\left|\omega\right|}\right)$,
for each $\omega\in\tilde{E}$. Further,} the incidence matrix \foreignlanguage{english}{$\tilde{A}\in\left\{ 0,1\right\} ^{\tilde{E}\times\tilde{E}}$
is given by} $\tilde{a}\left(\omega,\omega'\right):=a\left(\omega_{|\omega|},\omega'_{1}\right)$
and the contractions $\left(\tilde{\phi}_{\omega}\right)_{\omega\in\tilde{E}}$
are defined by $\tilde{\phi}_{\omega}:=\phi_{\omega_{1}}\circ\dots\circ\phi_{\omega_{|\omega|}}$,
for each $\omega\in\tilde{E}$. \end{defn}
\begin{notation}
For the $N$-induced GDMS $\tilde{\Phi}$, there are canonical embeddings
$\Sigma_{\tilde{\Phi}}^{*}\hookrightarrow\Sigma_{\Phi}^{*}$ and $\Sigma_{\tilde{\Phi}}\hookrightarrow\Sigma_{\Phi}$,
which we will both denote by $\iota$. It will always be clear which
map is in use. \end{notation}
\begin{defn}
For a function $f:\Sigma_{\Phi}\rightarrow\R$, its \emph{induced
version} $\tilde{f}:\Sigma_{\tilde{\Phi}}\rightarrow\R$ is given
by $\tilde{f}\left(\tilde{\tau}\right):=S_{|\iota\left(\tilde{\tau}_{1}\right)|}f(\iota\left(\tau\right))$,
for each $\tilde{\tau}=\left(\tilde{\tau}_{1},\tilde{\tau}_{2},\dots\right)\in\Sigma_{\tilde{\Phi}}$. 
\end{defn}
The proof of the following lemma is straightforward and therefore
omitted.
\begin{lem}
\label{lem:induced-gdms-facts}Let $\Phi$ denote a conformal GDMS
associated to $\F_{d}$ with $d\ge2$. Let $N$ denote a non-trivial
normal subgroup of $\F_{d}$, and let $\tilde{\Phi}$ denote the $N$-induced
GDMS of $\Phi$. We then have the following.
\begin{enumerate}
\item \label{enu:induced-gdms-finitelyirreducible}The incidence matrix
$\tilde{A}$ of $\tilde{\Phi}$ is finitely irreducible. 
\item \label{enu:induced-gdms-codingmaps}For the coding maps $\pi_{\tilde{\Phi}}:\Sigma_{\tilde{\Phi}}\rightarrow J\left(\tilde{\Phi}\right)$
and $\pi_{\Phi}:\Sigma_{\Phi}\rightarrow J\left(\Phi\right)$, we
have $\pi_{\tilde{\Phi}}\left(\tilde{\tau}\right)=\pi_{\Phi}\left(\iota\left(\tilde{\tau}\right)\right)$
for each $\tilde{\tau}\in\Sigma_{\tilde{\Phi}}$.
\item \label{enu:induced-gdms-ergodicsums}The geometric potential $\tilde{\zeta}:\Sigma_{\tilde{\Phi}}\rightarrow\R$
of $\tilde{\Phi}$ is the induced version of the geometric potential
$\zeta:\Sigma_{\Phi}\rightarrow\R$ of $\Phi$. 
\item \label{enu:induced-gdms-hoelderdistortion}Let $f:\Sigma_{\Phi}\rightarrow\R$
be H\"older continuous. Then the induced version $\tilde{f}:\Sigma_{\tilde{\Phi}}\rightarrow\R$
is H\"older continuous and there exists a constant $C_{f}>0$ such
that $S_{\iota\left(\tilde{\omega}\right)}f-C_{f}\le S_{\tilde{\omega}}\tilde{f}\le S_{\iota\left(\tilde{\omega}\right)}f$,
\linebreak for all $\tilde{\omega}\in\Sigma_{\tilde{\Phi}}^{*}$. 
\end{enumerate}
\end{lem}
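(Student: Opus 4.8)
The plan is to handle the four assertions in increasing order of difficulty, deriving (\ref{enu:induced-gdms-codingmaps}) and (\ref{enu:induced-gdms-ergodicsums}) directly from the definitions together with the chain rule, obtaining (\ref{enu:induced-gdms-hoelderdistortion}) from the bounded distortion property, and reserving the genuine work for (\ref{enu:induced-gdms-finitelyirreducible}). For (\ref{enu:induced-gdms-codingmaps}), I would observe that by construction $\tilde{\phi}_{\tilde{\tau}_1}\circ\dots\circ\tilde{\phi}_{\tilde{\tau}_n}=\phi_{\iota(\tilde{\tau})_{|m_n}}$ with $m_n:=\sum_{k=1}^{n}\left|\iota(\tilde{\tau}_k)\right|$, so that the nested sets defining $\pi_{\tilde{\Phi}}(\tilde{\tau})$ form a subsequence of those defining $\pi_{\Phi}(\iota(\tilde{\tau}))$; since all generators are uniform contractions, the diameters tend to zero and both nested intersections collapse to the same single point. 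Part (\ref{enu:induced-gdms-ergodicsums}) is then immediate from the chain rule: writing $\tilde{\tau}_1=(\omega_1,\dots,\omega_m)$ with $m=\left|\iota(\tilde{\tau}_1)\right|$, the logarithm of the derivative of $\tilde{\phi}_{\tilde{\tau}_1}=\phi_{\omega_1}\circ\dots\circ\phi_{\omega_m}$ telescopes into $S_m\zeta(\iota(\tilde{\tau}))$ evaluated at $\pi_{\Phi}(\sigma^m\iota(\tilde{\tau}))=\pi_{\tilde{\Phi}}(\sigma(\tilde{\tau}))$ by (\ref{enu:induced-gdms-codingmaps}), which is exactly the induced version of $\zeta$.

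For (\ref{enu:induced-gdms-hoelderdistortion}) the first step is the pointwise telescoping identity $S_{\left|\tilde{\omega}\right|}\tilde{f}(\tilde{\tau})=S_{\left|\iota(\tilde{\omega})\right|}f(\iota(\tilde{\tau}))$ for every $\tilde{\tau}\in[\tilde{\omega}]$, obtained by concatenating the block sums $\tilde{f}(\sigma^k\tilde{\tau})=S_{\left|\iota(\tilde{\tau}_{k+1})\right|}f(\sigma^{m_k}\iota(\tilde{\tau}))$ over $k$. Taking suprema and using $\iota([\tilde{\omega}])\subset[\iota(\tilde{\omega})]$ gives the upper bound $S_{\tilde{\omega}}\tilde{f}\le S_{\iota(\tilde{\omega})}f$ at once; the lower bound $S_{\iota(\tilde{\omega})}f-C_f\le S_{\tilde{\omega}}\tilde{f}$ follows because Fact \ref{fact-bounded-distortion-property} allows any near-maximiser in $[\iota(\tilde{\omega})]$ to be replaced by a point of the smaller set $\iota([\tilde{\omega}])$ at the cost of the distortion constant $C_f$. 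For Hölder continuity of $\tilde{f}$, I would note that agreement of $\tilde{\tau},\tilde{\tau}'$ in the first $n$ symbols of $\Sigma_{\tilde{\Phi}}$ forces agreement of $\iota(\tilde{\tau}),\iota(\tilde{\tau}')$ in at least the first $m_n\ge n$ symbols of $\Sigma_{\Phi}$, since every induced symbol has length $\ge1$; applying the Hölder estimate for $f$ termwise to the fixed-length sum $S_{\left|\iota(\tilde{\tau}_1)\right|}f$ (the length is common because $\tilde{\tau}_1=\tilde{\tau}'_1$) then yields a bound that decays geometrically in $n$, hence Hölder continuity with respect to the metric on $\Sigma_{\tilde{\Phi}}$.

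The main obstacle is (\ref{enu:induced-gdms-finitelyirreducible}), where the group structure genuinely enters. Since the vertex set $V$ is finite and the incidence $\tilde{a}(\omega,\omega')=a(\omega_{\left|\omega\right|},\omega'_1)$ depends only on the pair of boundary generators, it suffices to exhibit, for each of the finitely many admissible boundary configurations determined by a pair $(v,w)\in V\times V$, a single connecting word in $\Sigma_{\tilde{\Phi}}^{*}$ whose initial generator may follow an edge ending at $v$ and whose terminal generator may precede an edge beginning at $w$; collecting these finitely many words produces the witnessing finite set. To build such a connecting word I would use that a non-trivial normal subgroup $N$ of $\F_d$ is infinite (it contains a nontrivial element, hence an infinite cyclic subgroup, as $\F_d$ is torsion-free) and normal, so that $N$ contains reduced words of arbitrary length and, by conjugating a fixed nontrivial element of $N$ and passing to powers, reduced words whose first and last generators avoid the two prohibited inverse values $v^{-1}$ and $w^{-1}$ dictated by the reduced-path constraint $\tau_i\neq\tau_{i+1}^{-1}$. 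Decomposing any such word into its successive first-return factors yields an admissible element of $\Sigma_{\tilde{\Phi}}^{*}$, the internal splices being automatically reduced since nothing is inserted there. The point requiring care is controlling both boundary generators simultaneously while keeping the word inside $N$, and this is precisely where the normality and infinitude of $N$ provide the necessary freedom.
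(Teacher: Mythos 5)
The paper omits the proof of this lemma as straightforward, and your verifications of parts (\ref{enu:induced-gdms-codingmaps}), (\ref{enu:induced-gdms-ergodicsums}) and (\ref{enu:induced-gdms-hoelderdistortion}) are exactly the routine arguments intended: the composed-contraction identity $\tilde{\phi}_{\tilde{\tau}_{1}}\circ\dots\circ\tilde{\phi}_{\tilde{\tau}_{n}}=\phi_{\iota(\tilde{\tau})_{|m_{n}}}$ and the subsequence-of-nested-sets observation, the chain-rule telescoping for $\tilde{\zeta}$, the sup/distortion argument via Fact \ref{fact-bounded-distortion-property} (note the lower bound tacitly uses $[\tilde{\omega}]\neq\emptyset$, which is supplied by part (\ref{enu:induced-gdms-finitelyirreducible})), and the geometric-decay estimate for H\"older continuity using $m_{n}\ge n$. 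All of this is correct.

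In part (\ref{enu:induced-gdms-finitelyirreducible}), however, your construction paragraph contains a genuine slip. Your first sentence states the right admissibility requirement, but the construction then weakens it: since $\tilde{a}(\omega,\omega')=a(\omega_{|\omega|},\omega'_{1})$ and $a(e,f)=1$ iff $t(e)=i(f)$, the connecting word's underlying group word must \emph{begin with exactly} the letter $v=\tilde{t}(\tilde{e})$, not merely have first letter avoiding $v^{-1}$; only at the right end is the condition an avoidance condition (last letter $\neq w^{-1}$, with terminal slot $w=\tilde{i}(\tilde{e}')$). With the exact-matching constraint in place, the announced recipe of ``conjugating a fixed nontrivial element of $N$ and passing to powers'' cannot close the argument by itself: the reduced form of any conjugate $gu^{k}g^{-1}$ begins and ends in mutually inverse letters, so a conjugate beginning with $v$ necessarily ends with $v^{-1}$, which violates the right-hand constraint precisely in the diagonal case $w=v$. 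The missing idea is a product of two conjugates: take $u\in N$ nontrivial and cyclically reduced (cyclic permutations are conjugates, so this is free; replacing $u$ by $u^{-1}$ if needed rules out cancellation with the chosen conjugator), and set $\eta:=\bigl(vu^{k}v^{-1}\bigr)\bigl(zu^{k}z^{-1}\bigr)\in N$ with a letter $z\notin\{v,w\}$, which exists because $\card(V)=2d\ge4$. The junction $v^{-1}z$ is reduced since $z\neq v$, so $\eta$ begins with $v$ and ends with $z^{-1}\neq w^{-1}$; encoding $\eta$ as an edge word with terminal slot $w$ and decomposing into first-return blocks (which, as you correctly note, is automatic for any reduced word with product in $N$) yields the connecting word for the pair $(v,w)$, and collecting one such word for each of the finitely many pairs in $V\times V$ gives the witnessing finite set. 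So the overall architecture of your part (\ref{enu:induced-gdms-finitelyirreducible}) is right, but the step you yourself flag as ``requiring care'' is exactly where your stated construction fails and needs this two-factor repair.
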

The next proposition provides a version of Bowen's formula for the
Hausdorff dimension of the radial limit set of a normal subgroup $N$
of $\F_{d}$ with respect to a conformal GDMS $\Phi$ associated to
$\F_{d}$. This extends \cite[Proposition 6.2.8]{JaerischDissertation11}
and \cite[Proposition 1.3]{Jaerisch11a}. Moreover, we establish a
multifractal formalism in this context. 
\begin{prop}
\label{prop:t-vs-dimension}Let $\Phi$ denote a conformal GDMS associated
to \emph{$\F_{d}$ with} $d\ge2$, and let $N$ denote a non-trivial
normal subgroup of $\F_{d}$. Let $\psi:\Sigma_{\Phi}\rightarrow\R$
be H\"older continuous and let $t_{N}$ denote the free energy function
of $\left(N,\Phi,\psi\right)$. Then we have the following. 
\selectlanguage{english}%
\begin{enumerate}
\item \textup{\label{enu:t0-dimension-limitset}$\dim_{H}\left(\pi_{\Phi}\left(\Lambda_{\mathrm{ur}}\left(N\right)\right)\right)=\dim_{H}\left(\pi_{\Phi}\left(\Lambda_{\mathrm{r}}\left(N\right)\right)\right)=\delta_{N}$. }
\selectlanguage{british}%
\item \label{enu:multifractal-upperbound}$\dim_{H}\left(\pi_{\Phi}\left(\Lambda_{\mathrm{r}}\left(N\right)\cap\mathcal{F}\left(\alpha,\Phi,\psi\right)\right)\right)\le\max\left\{ -t_{N}^{*}\left(-\alpha\right),0\right\} $,
for each $\alpha\in\R$. 
\item \label{enu:multifractal-formalism}$\dim_{H}\left(\pi_{\Phi}\left(\Lambda_{\mathrm{ur}}\left(N\right)\cap\mathcal{F}\left(\alpha,\Phi,\psi\right)\right)\right)=\dim_{H}\left(\pi_{\Phi}\left(\Lambda_{\mathrm{r}}\left(N\right)\cap\mathcal{F}\left(\alpha,\Phi,\psi\right)\right)\right)=-t_{N}^{*}\left(-\alpha\right)>0$,
for each $\alpha\in-\Int\left(\partial t_{N}\left(\R\right)\right)$.
\end{enumerate}
\end{prop}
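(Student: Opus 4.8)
The plan is to transfer the whole analysis to the \emph{$N$-induced GDMS} $\tilde{\Phi}$ of Definition \ref{N-induced-GDMS}, whose incidence matrix $\tilde{A}$ is finitely irreducible by Lemma \ref{lem:induced-gdms-facts} (\ref{enu:induced-gdms-finitelyirreducible}); thus the symbolic thermodynamic formalism of Section \ref{sec:Thermodynamic-Formalism-for} and the generalised Bowen's formula (Theorem \ref{thm:cgdms-bowen-formula}, and Remark \ref{bowen-formula-without-4e} for the required conformality) apply to $\tilde{\Phi}$. The combinatorial engine is that every $\omega\in N\setminus\{\id\}$ factorises uniquely into first-return loops, that is into edges of $\tilde{E}$; together with bounded distortion of $\zeta$ and $\psi$ (Fact \ref{fact-bounded-distortion-property}) and Lemma \ref{lem:induced-gdms-facts} (\ref{enu:induced-gdms-ergodicsums}),(\ref{enu:induced-gdms-hoelderdistortion}) this gives, for all $\beta,u\in\R$,
\[
\sum_{\tilde{\omega}\in\Sigma_{\tilde{\Phi}}^{*}}\e^{\beta S_{\tilde{\omega}}\tilde{\psi}+uS_{\tilde{\omega}}\tilde{\zeta}}\asymp\sum_{\omega\in N\setminus\{\id\}}\e^{\beta S_{\omega}\psi+uS_{\omega}\zeta}.
\]
Since $-\tilde{\zeta}$ is bounded away from $0$, Fact \ref{fac:criticalexponents-via-pressure} applied on $\Sigma_{\tilde{\Phi}}$ identifies $\mathcal{P}_{-\tilde{\zeta}}\left(\beta\tilde{\psi},\Sigma_{\tilde{\Phi}}^{*}\right)$ with $\inf\{u:\sum_{\tilde{\omega}}\e^{\beta S_{\tilde{\omega}}\tilde{\psi}+uS_{\tilde{\omega}}\tilde{\zeta}}<\infty\}$, which by the displayed comparison equals $t_{N}(\beta)$. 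Hence $t_{N}$ coincides with the free energy function of the induced system $\tilde{\Phi}$ with respect to $\tilde{\psi},\tilde{\zeta}$; taking $\beta=0$ and using Theorem \ref{thm:cgdms-bowen-formula} gives $\dim_{H}(J(\tilde{\Phi}))=\dim_{H}(J^{*}(\tilde{\Phi}))=\delta_{N}$.

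Next I would match the symbolic sets with $\tilde{\Phi}$. By Lemma \ref{lem:induced-gdms-facts} (\ref{enu:induced-gdms-codingmaps}), $J(\tilde{\Phi})=\pi_{\Phi}(\iota(\Sigma_{\tilde{\Phi}}))$, and $\iota(\Sigma_{\tilde{\Phi}})$ is precisely the set of $\tau\in\Sigma$ with $\tau_{1}\cdots\tau_{n}\in N$ for infinitely many $n$, so $\iota(\Sigma_{\tilde{\Phi}})\subseteq\Lambda_{\mathrm{r}}(N)$ (take $h=\id$). A point coded by finitely many edges of $\tilde{\Phi}$ visits only finitely many cosets and returns to $N$ infinitely often, hence lies in $\Lambda_{\mathrm{ur}}(N)$; therefore $J^{*}(\tilde{\Phi})\subseteq\pi_{\Phi}(\Lambda_{\mathrm{ur}}(N))$. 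For the reverse bound I would exploit normality of $N$: if $\tau_{1}\cdots\tau_{n}\in hN$ for infinitely many $n$, then after the first such $n$, at the prefix $\omega$, the tail $\sigma^{|\omega|}(\tau)$ has prefixes in $N$ infinitely often, whence
\[
\Lambda_{\mathrm{r}}(N)\subseteq\bigcup_{h\in\F_{d}}\bigcup_{\omega}\Bigl([\omega]\cap\sigma^{-|\omega|}\bigl(\iota(\Sigma_{\tilde{\Phi}})\bigr)\Bigr).
\]
Under $\pi_{\Phi}$ each piece is, up to the conformal and hence dimension-preserving map $\phi_{\omega}$, a copy of a subset of $J(\tilde{\Phi})$, so countable stability of Hausdorff dimension yields $\dim_{H}(\pi_{\Phi}(\Lambda_{\mathrm{r}}(N)))\le\delta_{N}$. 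Combined with $J^{*}(\tilde{\Phi})\subseteq\pi_{\Phi}(\Lambda_{\mathrm{ur}}(N))\subseteq\pi_{\Phi}(\Lambda_{\mathrm{r}}(N))$ and $\dim_{H}(J^{*}(\tilde{\Phi}))=\delta_{N}$, this proves (\ref{enu:t0-dimension-limitset}).

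For (\ref{enu:multifractal-upperbound}) and (\ref{enu:multifractal-formalism}) I would run the multifractal formalism for the conformal GDMS $\tilde{\Phi}$ in the manner of \cite{JaerischKessebohmer:09}. The essential reduction, following \cite{MundayKessStrat10}, is that along $\iota$ the condition $S_{n}\psi/S_{n}\zeta\to\alpha$ is equivalent to the induced ratio $S_{k}\tilde{\psi}/S_{k}\tilde{\zeta}\to\alpha$: Lemma \ref{lem:induced-gdms-facts} (\ref{enu:induced-gdms-hoelderdistortion}) matches the induced Birkhoff sums with the original ones at the return times, and bounded distortion controls the fluctuation of $S_{n}\psi$ and $S_{n}\zeta$ between consecutive returns. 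Granting this, (\ref{enu:multifractal-upperbound}) follows from a covering argument valid for all $\alpha$: covering $\Lambda_{\mathrm{r}}(N)\cap\mathcal{F}(\alpha,\Phi,\psi)$ by induced cylinders with the weight $\e^{\beta S_{\tilde{\omega}}\tilde{\psi}+uS_{\tilde{\omega}}\tilde{\zeta}}$ and optimising over $\beta$ produces the Legendre value $\inf_{\beta}(t_{N}(\beta)+\beta\alpha)=-t_{N}^{*}(-\alpha)$, the coset reduction of the previous paragraph again reducing the general case to the identity coset. For (\ref{enu:multifractal-formalism}), when $\alpha\in-\Int(\partial t_{N}(\R))$ there is $\beta$ with $\alpha\in-\partial t_{N}(\beta)$, and the Gibbs state (Theorem \ref{thm:existence-of-gibbs-measures}) for $\beta\tilde{\psi}+t_{N}(\beta)\tilde{\zeta}$ on a suitable \emph{finite} subsystem of $\tilde{\Phi}$ concentrates on the induced level set, charges $J^{*}(\tilde{\Phi})\subseteq\pi_{\Phi}(\Lambda_{\mathrm{ur}}(N))$, and has Hausdorff dimension $-t_{N}^{*}(-\alpha)=t_{N}(\beta)+\beta\alpha>0$ by the volume lemma; this gives the matching lower bound and the stated positivity.

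The step I expect to be hardest is the level-set correspondence for the \emph{infinite}-alphabet induced system $\tilde{\Phi}$ (which has infinitely many edges exactly when $[\F_{d}:N]=\infty$): one must control $S_{n}\psi/S_{n}\zeta$ at the indices $n$ falling strictly between return times, where $\tilde{\Phi}$ carries no information, and then secure the sharp lower bound despite the lack of a global Gibbs state on the full infinite system. This is what forces the passage to finite subsystem approximations and the induced-pressure machinery of \cite{JaerischKessebohmer10}, and it is precisely the restriction $\alpha\in-\Int(\partial t_{N}(\R))$ --- the interior of the attainable range of slopes --- that furnishes a point $\beta$ of differentiability and hence an equilibrium state realising the dimension value $-t_{N}^{*}(-\alpha)$.
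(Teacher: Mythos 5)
Your proposal is correct and follows essentially the same route as the paper: the $N$-induced GDMS $\tilde{\Phi}$ with the sandwich inclusion $J^{*}(\tilde{\Phi})\subset\pi_{\Phi}(\Lambda_{\mathrm{ur}}(N))\subset\pi_{\Phi}(\Lambda_{\mathrm{r}}(N))\subset J(\tilde{\Phi})\cup(\mbox{countably many Lipschitz images})$, the generalised Bowen formula via Remark \ref{bowen-formula-without-4e}, the identification of $t_{N}$ with the induced free energy through the finite-to-one factorisation into first-return loops, the level-set transfer with bounded return blocks in the manner of \cite{MundayKessStrat10}, the covering upper bound, and the finite-subsystem exhaustion of \cite{JaerischKessebohmer:09,JaerischKessebohmer10} for the lower bound. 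The only looseness is in your last step: a Gibbs state for $\beta\tilde{\psi}+t_{N}(\beta)\tilde{\zeta}$ restricted to a fixed finite subsystem has subsystem pressure strictly negative and Birkhoff slope only approximately $\alpha$, so the exact value $-t_{N}^{*}(-\alpha)$ and its positivity are obtained, as in the paper, in the limit via the exhaustion principle $\tilde{t}(\beta)=\sup_{n}\tilde{t}_{n}(\beta)$ and the fact that each $-\tilde{t}_{n}^{*}$ is non-negative, strictly concave and real-analytic --- a refinement your final paragraph already anticipates.
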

\begin{proof}
Let $\tilde{\Phi}$ denote the $N$-induced GDMS of $\Phi$. First,
we relate the limit set of $\tilde{\Phi}$ to the radial limit set
of $N$ with respect to $\Phi$. Using Lemma \ref{lem:induced-gdms-facts}
(\ref{enu:induced-gdms-codingmaps}), it is straightforward to verify
that 
\begin{equation}
J^{*}\left(\tilde{\Phi}\right)\subset\pi_{\Phi}\left(\Lambda_{\mathrm{ur}}\left(N\right)\right)\subset\pi_{\Phi}\left(\Lambda_{\mathrm{r}}\left(N\right)\right)\subset J\left(\tilde{\Phi}\right)\cup\bigcup_{\eta\in\Sigma_{\Phi}^{*},\tilde{\tau}\in\Sigma_{\tilde{\Phi}}:\eta\iota\left(\tilde{\tau}\right)\in\Sigma_{\Phi}}\phi_{\eta}\left(\pi_{\tilde{\Phi}}\left(\tilde{\tau}\right)\right).\label{eq:Lur-loop-gdms-1-1}
\end{equation}
Note that the right-hand side of (\ref{eq:Lur-loop-gdms-1-1})  is
a countable union of Lipschitz continuous images of \foreignlanguage{english}{$J\left(\tilde{\Phi}\right)$.
Since Lipschitz continuous maps do not increase Hausdorff dimension
and since Hausdorff dimension is stable under countable unions, we
obtain that 
\begin{equation}
\dim_{H}\left(J^{*}\left(\tilde{\Phi}\right)\right)\le\dim_{H}\left(\pi_{\Phi}\left(\Lambda_{\mathrm{ur}}\left(N\right)\right)\right)\le\dim_{H}\left(\pi_{\Phi}\left(\Lambda_{\mathrm{r}}\left(N\right)\right)\right)\le\dim_{H}\left(J\left(\tilde{\Phi}\right)\right).\label{eq:Lur-loop-gdems-1-2}
\end{equation}
}The GDMS $\tilde{\Phi}$ satisfies the conditions (\ref{enu:cgdms-a-phasespace})-(\ref{enu:cgdms-d-coneproperty})
in Definition \ref{def:cgdms}. Further, since $\bigl\{\tilde{\phi}_{\tilde{\omega}}:\tilde{\omega}\in\Sigma_{\tilde{\Phi}}^{*}\bigr\}$
is a subfamily of $\bigl\{\phi_{\omega}:\omega\in\Sigma_{\Phi}^{*}\bigr\}$,
it follows that $\tilde{\Phi}$ satisfies (\ref{eq:log-varphi-prime-hoelder})
of Lemma \ref{lem:geometricpotential-is-hoelder}. \foreignlanguage{english}{Moreover,
by Lemma }\ref{lem:induced-gdms-facts} (\ref{enu:induced-gdms-finitelyirreducible}),
the incidence matrix of $\tilde{\Phi}$ is finitely irreducible. Hence,
b\foreignlanguage{english}{y }Remark \ref{bowen-formula-without-4e}\foreignlanguage{english}{,}
the generalised Bowen's formula in Theorem \ref{thm:cgdms-bowen-formula}
and (\ref{eq:Lur-loop-gdems-1-2}) give that 
\begin{equation}
\dim_{H}\left(\pi_{\Phi}\left(\Lambda_{\mathrm{ur}}\left(N\right)\right)\right)=\dim_{H}\left(\pi_{\Phi}\left(\Lambda_{\mathrm{r}}\left(N\right)\right)\right)=\inf\biggl\{\beta\in\R:\sum_{\tilde{\omega}\in\Sigma_{\tilde{\Phi}}^{*}}\e^{\beta S_{\tilde{\omega}}\tilde{\zeta}}<\infty\biggr\}.\label{eq:Lur-loop-gdms-2-1}
\end{equation}
For each $\tilde{\omega}\in\Sigma_{\tilde{\Phi}}^{*}$ there exists
$n\in\N$ such that $\iota\left(\tilde{\omega}\right)\in\Sigma_{\Phi}^{n}$.
Write $\iota\left(\tilde{\omega}\right)=\left(\left(v_{1},w_{1}\right),\left(v_{2},w_{2}\right),\dots,\left(v_{n},w_{n}\right)\right)$.
By mapping each element $\left(\left(v_{1},w_{1}\right),\left(v_{2},w_{2}\right),\dots,\left(v_{n},w_{n}\right)\right)$
to $\left(v_{1}v_{2}\dots v_{n}\right)$, we obtain a $\left(2d-1\right)$-to-one
map from $\Sigma_{\tilde{\Phi}}^{*}$ onto $N\setminus\left\{ \id\right\} $.
Hence, by \foreignlanguage{english}{Lemma }\ref{lem:induced-gdms-facts}
(\ref{enu:induced-gdms-hoelderdistortion}), we see that 
\begin{equation}
\inf\Bigl\{\beta\in\R:\sum_{\tilde{\omega}\in\Sigma_{\tilde{\Phi}}^{*}}\e^{\beta S_{\tilde{\omega}}\tilde{\zeta}}<\infty\Bigr\}=\inf\Bigl\{\beta\in\R:\sum_{\omega\in N\setminus\left\{ \id\right\} }\e^{\beta S_{\omega}\zeta}<\infty\Bigr\}=\delta_{N}.\label{eq:t-N-via-series}
\end{equation}
Combining (\ref{eq:Lur-loop-gdms-2-1}) and (\ref{eq:t-N-via-series})
finishes the proof of (\ref{enu:t0-dimension-limitset}).

For the remaining part of the proof, we define for each $\alpha\in\R$
the symbolic level sets 
\begin{align*}
\tilde{\mathcal{F}}^{*}\left(\alpha\right) & :=\Bigl\{\tilde{\tau}=(\tilde{\tau}_{1},\tilde{\tau}_{2},\dots)\in\Sigma_{\tilde{\Phi}}:\lim_{k\rightarrow\infty}\frac{S_{(\tilde{\tau}_{1},\dots,\tilde{\tau}_{k})}\tilde{\psi}}{S_{(\tilde{\tau}_{1},\dots,\tilde{\tau}_{k})}\tilde{\zeta}}=\alpha\mbox{ and }\sup_{i\in\N}\left\{ \iota\left(\tilde{\tau}_{i}\right)\right\} <\infty\Bigr\}\\
 & \mbox{and }\\
\tilde{\mathcal{F}}\left(\alpha\right) & :=\Bigl\{\tilde{\tau}=(\tilde{\tau}_{1},\tilde{\tau}_{2},\dots)\in\Sigma_{\tilde{\Phi}}:\lim_{k\rightarrow\infty}\frac{S_{(\tilde{\tau}_{1},\dots,\tilde{\tau}_{k})}\tilde{\psi}}{S_{(\tilde{\tau}_{1},\dots,\tilde{\tau}_{k})}\tilde{\zeta}}=\alpha\Bigr\}.
\end{align*}
Furthermore, we define \foreignlanguage{english}{$\tilde{t}:\R\rightarrow\R\cup\left\{ \infty\right\} $
which is for each $\beta\in\R$ given by} 
\[
\tilde{t}\left(\beta\right)=\inf\left\{ u\in\R:\mathcal{P}\left(\beta\tilde{\psi}+u\tilde{\zeta}\right)\le0\right\} .
\]
By Fact \ref{fac:criticalexponents-via-pressure} we have that for
each $\beta\in\R$, 
\[
\tilde{t}\left(\beta\right)=\inf\Bigl\{ u\in\R:\sum_{\tilde{\omega}\in\Sigma_{\tilde{\Phi}}^{*}}\e^{\beta S_{\tilde{\omega}}\tilde{\psi}+uS_{\tilde{\omega}}\tilde{\zeta}}<\infty\Bigr\}.
\]
Following the proof of (\ref{eq:t-N-via-series}), we see that, for
each $\beta\in\R$, we have 
\begin{equation}
\tilde{t}\left(\beta\right)=t_{N}\left(\beta\right).\label{eq:t_N-is-inducedfreeenergy}
\end{equation}
To prove the upper bound for the Hausdorff dimension in (\ref{enu:multifractal-upperbound}),
first observe that, similarly as in (\ref{eq:Lur-loop-gdms-1-1}),
the set $\pi_{\Phi}\left(\Lambda_{\mathrm{r}}\left(N\right)\cap\mathcal{F}\left(\alpha,\Phi,\psi\right)\right)$
is contained in a countable union of Lipschitz continuous images of
$\pi_{\tilde{\Phi}}\left(\tilde{\mathcal{F}}\left(\alpha\right)\right)$.
Then by a standard covering argument (see e.g. \cite[Theorem 1.2]{JaerischKessebohmer:09})
we have $\dim_{H}\left(\tilde{\mathcal{F}}\left(\alpha\right)\right)\le\max\left\{ -\tilde{t}^{*}\left(-\alpha\right),0\right\} $,
which - in light of (\ref{eq:t_N-is-inducedfreeenergy}) - finishes
the proof of (\ref{enu:multifractal-upperbound}). 

To prove (\ref{enu:multifractal-formalism}), we first verify that
for each $\alpha\in\R$, 
\begin{equation}
\iota\left(\tilde{\mathcal{F}}^{*}\left(\alpha\right)\right)\subset\Lambda_{\mathrm{ur}}\left(N\right)\cap\mathcal{F}\left(\alpha,\Phi,\psi\right).\label{eq:induced-levelsets}
\end{equation}
\foreignlanguage{english}{Clearly, we have that $\iota\left(\tilde{\mathcal{F}}^{*}\left(\alpha\right)\right)\subset\Lambda_{\mathrm{ur}}\left(N\right)$,
so it remains to show that $\iota\left(\tilde{\mathcal{F}}^{*}\left(\alpha\right)\right)\subset\mathcal{F}\left(\alpha,\Phi,\psi\right)$.
The proof follows \cite[Proposition 4.3]{MundayKessStrat10}. Let
$\tilde{\tau}\in\tilde{\mathcal{F}}^{*}\left(\alpha\right)$ be given
and set $\tau:=\iota\left(\tilde{\tau}\right)$. Then there exists
$l\in\N$ such that $\left|\iota\left(\tilde{\tau}_{i}\right)\right|\le l$,
for each $i\in\N$. For each $n>l$, let 
\[
k\left(n\right):=\max\Bigl\{ k\in\N:\sum_{i=1}^{k}\iota\left(\left|\tilde{\tau}_{i}\right|\right)\le n\Bigr\}.
\]
Then there exists $r\left(n\right)\le l$ such that $n=\sum_{i=1}^{k\left(n\right)}\left|\iota\left(\tilde{\tau}_{i}\right)\right|+r\left(n\right)$.
For $f\in\left\{ \zeta,\psi\right\} $ we set 
\[
M_{f}=\max\left\{ S_{\omega}f:\omega\in\Sigma^{r}:1\le r\le l\right\} \quad\mbox{and}\quad m_{f}:=\min\left\{ S_{\omega}f:\omega\in\Sigma^{r}:1\le r\le l\right\} .
\]
Since $\zeta$ and $\psi$ are }H\"older continuous, we have by \foreignlanguage{english}{Lemma
}\ref{lem:induced-gdms-facts} (\ref{enu:induced-gdms-hoelderdistortion})
and Fact \ref{fact-bounded-distortion-property} that\foreignlanguage{english}{
\[
S_{(\tilde{\tau}_{1},\dots,\tilde{\tau}_{k\left(n\right)})}\tilde{f}+m_{f}-2C_{f}=S_{(\tau_{1},\dots,\tau_{n})}f=S_{(\tilde{\tau}_{1},\dots,\tilde{\tau}_{k\left(n\right)})}\tilde{f}+M_{f}+C_{f},
\]
which then gives 
\[
\frac{S_{(\tilde{\tau}_{1}\cdot\dots\cdot\tilde{\tau}_{k\left(n\right)})}\tilde{\psi}+m_{\psi}-2C_{\psi}}{S_{(\tilde{\tau}_{1}\cdot\dots\cdot\tilde{\tau}_{k\left(n\right)})}\tilde{\zeta}+M_{\zeta}+C_{\zeta}}\le\frac{S_{(\tau_{1},\dots,\tau_{n})}\psi}{S_{(\tau_{1},\dots,\tau_{n})}\zeta}\le\frac{S_{(\tilde{\tau}_{1}\cdot\dots\cdot\tilde{\tau}_{k\left(n\right)})}\tilde{\psi}+M_{\psi}+C_{\psi}}{S_{(\tilde{\tau}_{1}\cdot\dots\cdot\tilde{\tau}_{k\left(n\right)})}\tilde{\zeta}+m_{\zeta}-2C_{\zeta}}.
\]
Since $\tilde{\tau}\in\tilde{\mathcal{F}}^{*}\left(\alpha\right)$
and $\bigl|S_{(\tilde{\tau}_{1}\cdot\dots\cdot\tilde{\tau}_{k\left(n\right)})}\tilde{\zeta}\bigr|\rightarrow\infty$,
as $n\rightarrow\infty$, it follows that $\tau\in\mathcal{F}\left(\alpha,\Phi,\psi\right)$.
The proof of (\ref{eq:induced-levelsets}) is complete. By Lemma \ref{lem:induced-gdms-facts}
(\ref{enu:induced-gdms-codingmaps}) we conclude that $\pi_{\tilde{\Phi}}\left(\tilde{\mathcal{F}}^{*}\left(\alpha\right)\right)\subset\pi_{\Phi}\left(\Lambda_{\mathrm{ur}}\left(N\right)\cap\mathcal{F}\left(\alpha,\Phi,\psi\right)\right)$.
}Combining with the upper bound in (\ref{enu:multifractal-upperbound})
and (\ref{eq:t_N-is-inducedfreeenergy}), the proof will be completed,
if we have shown that 
\[
\dim_{H}\left(\pi_{\tilde{\Phi}}\left(\tilde{\mathcal{F}}^{*}\left(\alpha\right)\right)\right)=-\tilde{t}^{*}\left(-\alpha\right)>0.
\]
Since we assume that $\alpha\in-\Int(\partial\tilde{t}\left(\R\right))$,
a straightforward modification of \cite[Proof of Theorem 1.2]{JaerischKessebohmer:09}
shows that $\dim_{H}\left(\pi_{\tilde{\Phi}}\left(\tilde{\mathcal{F}}^{*}\left(\alpha\right)\right)\right)=\dim_{H}\left(\pi_{\tilde{\Phi}}\left(\tilde{\mathcal{F}}\left(\alpha\right)\right)\right)=-\tilde{t}^{*}\left(-\alpha\right)$.
The crucial step of this modification is to show that the function
$\tilde{t}$ satisfies the exhaustion principle, that is, $\tilde{t}\left(\beta\right)=\sup_{n\in\N}\tilde{t}_{n}\left(\beta\right)$,
where $\tilde{t}_{n}$ is for each $\beta\in\R$ given by 
\[
\tilde{t}_{n}\left(\beta\right):=\inf\Bigl\{ u\in\R:\sum_{k\in\N}\sum_{\left(\tilde{\omega}_{1},\dots,\tilde{\omega}_{k}\right)\in\Sigma_{\tilde{\Phi}}^{k}:\left|\iota\left(\tilde{\omega}_{i}\right)\right|\le n,1\le i\le k}\e^{\beta S_{\tilde{\omega}}\tilde{\psi}+uS_{\tilde{\omega}}\tilde{\zeta}}<\infty\Bigr\}.
\]
This can be verified similarly as in \cite[Example 1.6, Theorem 1.7]{JaerischKessebohmer:09}
by using that $\Sigma_{\tilde{\Phi}}$ is finitely irreducible (cf.
\cite[Theorem 2.1.5]{MR2003772}). Finally, to prove that $-\tilde{t}^{*}\left(-\alpha\right)>0$,
we observe that $-\tilde{t}^{*}\left(-\alpha\right)\ge-\tilde{t}_{n}^{*}\left(-\alpha\right)>0$
for all $n$ sufficiently large, which follows from the well-known
fact that  $-\tilde{t}_{n}^{*}$ is a non-negative, strictly concave
and real-analytic function. The proof is complete.
\end{proof}
Recall that the GDMS $\tilde{\Phi}$ is called \emph{regular} (\cite[Section 4, p.78]{MR2003772})
if there exists $u\in\R$ such that $\mathcal{P}\bigl(u\tilde{\zeta}\bigr)=0$. 
\begin{lem}
\label{lem:regular-iff-divergencetype}Let $\Phi$ denote a conformal
GDMS associated to \emph{$\F_{d}$ with} $d\ge2$, and let $N$ denote
a non-trivial normal subgroup of $\F_{d}$. Let $\psi:\Sigma_{\Phi}\rightarrow\R$
be H\"older continuous and let $\beta\in\R$. Then $\left(N,\Phi,\psi\right)$
is of divergence type in $\beta$ if and only if there exists $u\in\R$
such that $\mathcal{P}\bigl(\beta\tilde{\psi}+u\tilde{\zeta}\bigr)=0$.
Moreover, if $N$ is finitely generated then $\left(N,\Phi,\psi\right)$
is of divergence type in $\beta$. \end{lem}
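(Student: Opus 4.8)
The plan is to transport both conditions to the $N$-induced system $\tilde{\Phi}$ and recognise them as the two faces of recurrence of a single H\"older potential. Write $h:=\tilde{t}(\beta)$, where $\tilde{t}$ is the function from the proof of Proposition \ref{prop:t-vs-dimension}; recall from there that $\tilde{t}(\beta)=t_{N}(\beta)$ and that, by Fact \ref{fac:criticalexponents-via-pressure} applied to the finitely irreducible shift $\Sigma_{\tilde{\Phi}}$ with $\Delta=-\tilde{\zeta}\ge c>0$,
\[
h=\inf\Bigl\{u\in\R:\mathcal P\bigl(\beta\tilde{\psi}+u\tilde{\zeta}\bigr)\le0\Bigr\}=\inf\Bigl\{u\in\R:\sum_{\tilde{\omega}\in\Sigma_{\tilde{\Phi}}^{*}}\e^{\beta S_{\tilde{\omega}}\tilde{\psi}+uS_{\tilde{\omega}}\tilde{\zeta}}<\infty\Bigr\}.
\]
First I would record, exactly as in the derivation of (\ref{eq:t-N-via-series}), that the $(2d-1)$-to-one correspondence between $\Sigma_{\tilde{\Phi}}^{*}$ and $N\setminus\{\id\}$ together with the distortion estimate of Lemma \ref{lem:induced-gdms-facts}(\ref{enu:induced-gdms-hoelderdistortion}) makes the two series $\sum_{\omega\in N}\e^{\beta S_{\omega}\psi+uS_{\omega}\zeta}$ and $\sum_{\tilde{\omega}\in\Sigma_{\tilde{\Phi}}^{*}}\e^{\beta S_{\tilde{\omega}}\tilde{\psi}+uS_{\tilde{\omega}}\tilde{\zeta}}$ converge and diverge simultaneously for every $u\in\R$ (the single term $\omega=\id$ is harmless). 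Since $t_{N}(\beta)=h$, the requirement that $(N,\Phi,\psi)$ be of divergence type in $\beta$ is thereby equivalent to $\sum_{\tilde{\omega}\in\Sigma_{\tilde{\Phi}}^{*}}\e^{\beta S_{\tilde{\omega}}\tilde{\psi}+hS_{\tilde{\omega}}\tilde{\zeta}}=\infty$.

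Next I would reduce the existence of a zero of the pressure to a statement at the single exponent $u=h$. Put $p(u):=\mathcal P(\beta\tilde{\psi}+u\tilde{\zeta})$. Using $\tilde{\zeta}\le-c<0$ and $\mathcal P(\,\cdot\,+\text{const})=\mathcal P(\,\cdot\,)+\text{const}$, one sees that $p$ is strictly decreasing with rate at least $c$ wherever it is finite; hence $p$ has at most one zero, and if it has one it must occur at $u=h$. Thus it suffices to decide whether $p(h)=0$. Because $p$ is non-increasing and, being the supremum of the continuous pressures of its finite subsystems, lower semicontinuous, one gets $p(h)\le0$; and if the critical series $\sum_{\tilde{\omega}}\e^{\beta S_{\tilde{\omega}}\tilde{\psi}+hS_{\tilde{\omega}}\tilde{\zeta}}$ converged, then $\limsup_{n}\frac1n\log\sum_{|\tilde{\omega}|=n}\e^{\beta S_{\tilde{\omega}}\tilde{\psi}+hS_{\tilde{\omega}}\tilde{\zeta}}<0$, i.e.\ $p(h)<0$. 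This gives the easy implication: divergence type in $\beta$ forces $p(h)=0$. The converse, $p(h)=0\Rightarrow$ divergence of the critical series, is the crux, and I expect it to be the main obstacle: a vanishing Gurevich pressure only forces subexponential growth of the level sums $\sum_{|\tilde{\omega}|=n}(\cdots)$ and does not by itself rule out a convergent (say polynomially decaying) tail. Here I would invoke the characterization of recurrent potentials of \cite{Jaerisch12b}: on the finitely irreducible shift $\Sigma_{\tilde{\Phi}}$ the potential $\beta\tilde{\psi}+h\tilde{\zeta}$ has vanishing pressure, and that characterization identifies $p(h)=0$ with recurrence of this potential, which is in turn equivalent to divergence of $\sum_{\tilde{\omega}}\e^{\beta S_{\tilde{\omega}}\tilde{\psi}+hS_{\tilde{\omega}}\tilde{\zeta}}$. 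This is precisely the ingredient flagged in the introduction.

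Finally, for the \emph{moreover} I would use that a finitely generated normal subgroup of $\F_{d}$ has finite index, so that $\F_{d}=\bigsqcup_{j=1}^{m}g_{j}N$. The finite-alphabet system $\Phi$ admits a Gibbs measure for $\beta\psi+t(\beta)\zeta$ by Theorem \ref{thm:existence-of-gibbs-measures}, since $\mathcal P(\beta\psi+t(\beta)\zeta)=0$ by Fact \ref{fac:criticalexponents-via-pressure}; hence $\sum_{|\omega|=n}\e^{\beta S_{\omega}\psi+t(\beta)S_{\omega}\zeta}\asymp1$ and $\sum_{\omega\in\F_{d}}\e^{\beta S_{\omega}\psi+t(\beta)S_{\omega}\zeta}=\infty$, so some coset $g_{j_{0}}N$ already carries an infinite sum. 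Choosing $c$ with $g_{j_{0}}c\in N$, normality of $N$ makes $\omega\mapsto\omega c$ a bijection of $g_{j_{0}}N$ onto $N$ that alters $|\omega|$, $S_{\omega}\psi$ and $S_{\omega}\zeta$ by uniformly bounded amounts (Fact \ref{fact-bounded-distortion-property}); a term-by-term comparison then yields $\sum_{\omega\in N}\e^{\beta S_{\omega}\psi+t(\beta)S_{\omega}\zeta}=\infty$. Combined with the trivial bound $t_{N}(\beta)\le t(\beta)$ arising from $N\subset\F_{d}$, this forces $t_{N}(\beta)=t(\beta)$ and exhibits divergence at the critical exponent, so that $(N,\Phi,\psi)$ is of divergence type in $\beta$.
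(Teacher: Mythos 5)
Your scaffolding for the equivalence is essentially the paper's: pass to the $N$-induced system, identify $t_{N}(\beta)$ with $\tilde{t}(\beta)$ via the $(2d-1)$-to-one correspondence and Lemma \ref{lem:induced-gdms-facts} (\ref{enu:induced-gdms-hoelderdistortion}), observe that strict monotonicity forces any zero of $u\mapsto\mathcal{P}\bigl(\beta\tilde{\psi}+u\tilde{\zeta}\bigr)$ to sit at $u=t_{N}(\beta)$, and use lower semicontinuity (monotone limit of continuous pressures, \cite[Theorem 2.1.5]{MR2003772}) to get $p(h)\le0$. One small slip en route: convergence of the critical series does \emph{not} give $\limsup_{n}\frac{1}{n}\log\sum_{|\tilde{\omega}|=n}\e^{\beta S_{\tilde{\omega}}\tilde{\psi}+hS_{\tilde{\omega}}\tilde{\zeta}}<0$ (the level sums could decay polynomially); all you need, and all that is true, is the contrapositive $p(h)<0\Rightarrow$ geometric decay $\Rightarrow$ convergence, so divergence forces $p(h)\ge0$. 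The genuine gap is at the step you yourself flag as the crux. There is no characterization in \cite{Jaerisch12b} identifying vanishing pressure with recurrence: the results of that paper used here (Theorem \ref{thm:recurrence-implies-amenable} and Proposition \ref{prop:recurrent-fullpressure-symmetric}) take recurrence as a \emph{hypothesis}, and on a general countable Markov shift the implication ``zero Gurevi\v{c} pressure $\Rightarrow$ recurrence'' is false --- transient potentials with vanishing pressure exist, which is precisely the convergent-tail scenario you worry about. What saves the step is finite irreducibility of $\Sigma_{\tilde{\Phi}}$ (Lemma \ref{lem:induced-gdms-facts} (\ref{enu:induced-gdms-finitelyirreducible})): the paper invokes Theorem \ref{thm:existence-of-gibbs-measures} to obtain a Gibbs measure $\mu$ for the H\"older potential $\beta\tilde{\psi}+t_{N}(\beta)\tilde{\zeta}$, and since $\sum_{\tilde{\omega}\in\Sigma_{\tilde{\Phi}}^{n}}\mu\left(\left[\tilde{\omega}\right]\right)=1$ for every $n$, the Gibbs property (\ref{eq:gibbs-equation}) at zero pressure yields $\sum_{\tilde{\omega}\in\Sigma_{\tilde{\Phi}}^{*}}\e^{\beta S_{\tilde{\omega}}\tilde{\psi}+t_{N}(\beta)S_{\tilde{\omega}}\tilde{\zeta}}=\infty$. (Equivalently, one may quote Sarig's big-images-and-preimages theorem \cite{MR1955261}, under which such potentials are positive recurrent.) With that substitution your argument for the equivalence is complete and coincides with the paper's.

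For the ``moreover'' you take a genuinely different route, and a sound one. The paper argues in one line that finitely generated $N$ makes the edge set $\tilde{E}$ finite, whence $\tilde{\Phi}$ is regular; you instead use that $N$ has finite index, the Gibbs measure for $\beta\psi+t(\beta)\zeta$ on the finite-alphabet shift to bound the level sums below, a pigeonhole over the finitely many cosets, and the right-translation $\omega\mapsto\omega c$ (cancellation touches only a suffix of length at most $|c|$, so all ergodic sums change by a uniform constant) to transport the divergent coset sum onto $N$; this simultaneously gives $t_{N}(\beta)=t(\beta)$ and divergence at the critical exponent. Your version is in fact the more robust of the two: first-return words to a proper finite-index normal subgroup can be arbitrarily long --- for the kernel of $\F_{2}\rightarrow\Z/3\Z$ sending both generators to $1$, the words $aa(b^{-1}a)^{k}b$ are first returns of length $2k+3$ --- so the finiteness of $\tilde{E}$ asserted in the paper's proof is not available in general, whereas your coset-translation argument bypasses $\tilde{E}$ altogether.
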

\begin{proof}
Suppose that $\left(N,\Phi,\psi\right)$ is of divergence type in
$\beta$. Then we have $\mathcal{P}\bigl(\beta\tilde{\psi}+t_{N}\left(\beta\right)\tilde{\zeta}\bigr)\ge0$.
Further, by Fact \ref{fac:criticalexponents-via-pressure}, we have
that $t_{N}\left(\beta\right)=\inf\bigl\{ u\in\R:\mathcal{P}\bigl(\beta\tilde{\psi}+u\tilde{\zeta}\bigr)\le0\bigr\}$,
since $\Sigma_{\tilde{\Phi}}$ is finitely irreducible by Lemma\foreignlanguage{english}{
}\ref{lem:induced-gdms-facts} (\ref{enu:induced-gdms-finitelyirreducible}).
Using again that $\Sigma_{\tilde{\Phi}}$ is finitely irreducible,
it follows from \cite[Theorem 2.1.5]{MR2003772} that the map $u\mapsto\mathcal{P}\bigl(\beta\tilde{\psi}+u\tilde{\zeta}\bigr)\in\R\cup\left\{ \infty\right\} $
is the monotone limit of a sequence of continuous functions. Consequently,
the map $u\mapsto\mathcal{P}\bigl(\beta\tilde{\psi}+u\tilde{\zeta}\bigr)$
is lower semi-continuous, which then implies that  $\mathcal{P}\bigl(\beta\tilde{\psi}+t_{N}\left(\beta\right)\tilde{\zeta}\bigr)\le0$.
Hence, we have $\mathcal{P}\bigl(\beta\tilde{\psi}+t_{N}\left(\beta\right)\tilde{\zeta}\bigr)=0$. 

To prove the converse, suppose that $\mathcal{P}\bigl(\beta\tilde{\psi}+t_{N}\left(\beta\right)\tilde{\zeta}\bigr)=0$.
Since $\Sigma_{\tilde{\Phi}}$ is finitely irreducible, there exists
a Gibbs measure $\mu$ for the potential $\beta\tilde{\psi}+t_{N}\left(\beta\right)\tilde{\zeta}$
supported on \foreignlanguage{english}{$\Sigma_{\tilde{\Phi}}$} by
Theorem \ref{thm:existence-of-gibbs-measures}. Hence, we have that
$\sum_{n\in\N}\sum_{\tilde{\omega}\in\Sigma_{\tilde{\Phi}}^{n}}\mu\left(\left[\tilde{\omega}\right]\right)=\infty$.
Since $\mathcal{P}\bigl(\beta\tilde{\psi}+t_{N}\left(\beta\right)\tilde{\zeta}\bigr)=0$,
it follows from the Gibbs property of $\mu$ (\ref{eq:gibbs-equation})
that $\left(N,\Phi,\psi\right)$ is of divergence type in $\beta$. 

To finish the proof, suppose that $N$ is finitely generated. Then
the edge set of $\tilde{\Phi}$ is finite and hence, $\tilde{\Phi}$
is regular. In particular, we have that $\left(N,\Phi,\psi\right)$
is of divergence type in $\beta$. The proof is complete.
\end{proof}

\section{Group-extended Markov systems \label{sec:Group-Extended-Markov}}

Throughout this section, let $I$ denote a finite or countable alphabet
and let $I^{*}$ denote the free semigroup generated by $I$. Let
$G$ denote a countable group $G$ and let $\Psi:I^{*}\rightarrow G$
denote a semigroup homomorphism. The skew product dynamical system
$\sigma\rtimes\Psi:\Sigma\times G\rightarrow\Sigma\times G$, which
is given by 
\[
\left(\sigma\rtimes\Psi\right)\left(\tau,g\right):=\left(\sigma\left(\tau\right),g\Psi\left(\tau_{1}\right)\right),\mbox{ for all }\left(\tau,g\right)\in\Sigma\times G,
\]
is called a \emph{group-extended Markov system} (see also \cite[Section 4]{Jaerisch11a}).
Note that $\left(\Sigma\times G,\sigma\rtimes\Psi\right)$ is conjugated
to the Markov shift with state space 
\[
\left\{ \left(\left(\tau_{j},g_{j}\right)\right)\in\left(I\times G\right)^{\N}:\left(\tau_{j}\right)\in\Sigma,\forall i\in\N\, g_{i}\Psi\left(\tau_{i}\right)=g_{i+1}\right\} .
\]
Let $\pi_{1}:\Sigma\times G\rightarrow\Sigma$ denote the canonical
projection. 

We will make use of the following notions of symmetry for group-extended
Markov systems.
\begin{defn}
\label{def-asympt-symmetric}Let $\left(\Sigma\times G,\sigma\rtimes\Psi\right)$
denote an irreducible group-extended Markov system\emph{.} We say
that $\varphi$ is \emph{asymptotically symmetric }with respect to
$\Psi$ (\cite[Definition 3.14]{Jaerisch11a}) if there exist $n_{0}\in\N$
and sequences $\left(c_{n}\right)\in\left(\R^{+}\right)^{\N}$ and
$\left(N_{n}\right)\in\N^{\N}$ with $\lim_{n}\left(c_{n}\right)^{1/n}=1$
and $\lim_{n}n^{-1}N_{n}=0$, such that for each $g\in G$ and for
all $n\ge n_{0}$, 
\[
\sum_{\omega\in\Sigma^{n}\cap\Psi^{-1}(g)}\e^{S_{\omega}\varphi}\le c_{n}\sum_{\omega\in\Sigma^{*}\cap\Psi^{-1}(g^{-1}):n-N_{n}\le\left|\omega\right|\le n+N_{n}}\e^{S_{\omega}\varphi}.
\]
If $\left(c_{n}\right)$ can be chosen to be bounded, then $\varphi$
is called \emph{symmetric} with respect to $\Psi$. Moreover, $\varphi$
is \emph{symmetric on average }with respect to $\Psi$ (\cite[Definition 1.4]{Jaerisch12b})
if 
\[
\sup_{g\in G}\limsup_{n\rightarrow\infty}\frac{\sum_{k=1}^{n}\e^{-kp\mathcal{P}\left(\varphi,\Psi^{-1}\left(\id\right)\cap\Sigma^{*}\right)}\sum_{\omega\in\Sigma^{kp}\cap\Psi^{-1}(g)}\e^{S_{\omega}\varphi}}{\sum_{k=1}^{n}\e^{-kp\mathcal{P}\left(\varphi,\Psi^{-1}\left(\id\right)\cap\Sigma^{*}\right)}\sum_{\omega\in\Sigma^{kp}\cap\Psi^{-1}(g^{-1})}\e^{S_{\omega}\varphi}}<\infty,
\]
where $p:=\gcd\left\{ n\in\N:\exists\omega\in\Sigma^{n}\cap\Psi^{-1}\left(\id\right)\mbox{ such that }\omega_{n}\omega_{1}\in\Sigma^{2}\right\} $. \end{defn}
\begin{rem}
\label{induced-is-gurevich}Throughout this section, we will make
use of the induced pressure $\mathcal{P}\left(\varphi,\Psi^{-1}\left(\id\right)\cap\Sigma^{*}\right)$
(see Definition \ref{def:induced-topological-pressure}), where $\Sigma$
is finitely primitive, $\left(\Sigma\times G,\sigma\rtimes\Psi\right)$
is an irreducible group-extended Markov system and $\varphi:\Sigma\rightarrow\R$
is H\"older continuous. A straightforward generalisation of the proof
of \cite[Remark 5.1.6]{JaerischDissertation11} shows that \foreignlanguage{english}{$\mathcal{P}\left(\varphi,\Psi^{-1}\left(\id\right)\cap\Sigma^{*}\right)$
coincides with the \emph{Gurevi\v c pressure} of $\varphi\circ\pi_{1}$
with respect to }\emph{$\left(\Sigma\times G,\sigma\rtimes\Psi\right)$
}(see \cite{MR1738951}). 
\end{rem}

\subsection{Amenability\label{sub:Amenability-of-the}}

Let us first recall the definition of the important property of groups
which was introduced by von Neumann \cite{vonNeumann1929amenabledef}
under the German name \emph{messbar}. By Day \cite{day1949amenabledef},
groups with this property were renamed amenable groups.
\begin{defn}
A discrete group\emph{ $G$ }is\emph{ amenable} if there exists a
finitely additive probability measure $\nu$ on the power set of $G$,
such that $\nu\left(A\right)=\nu\left(g\left(A\right)\right)$, for
all $g\in G$ and $A\subset G$. 
\end{defn}
The following result is taken from \cite[Corollary 1.6]{Jaerisch12c}.
(See also \cite[Theorem 5.3.11]{JaerischDissertation11} and \cite[Corollary 4.22 and Remark 4.23]{Jaerisch11a}),
where the case of a finite alphabet was considered.) Stadlbauer proved
a similar result for weakly symmetric potentials (\cite[Theorem 4.1]{Stadlbauer11}). 
\begin{thm}
[\cite{Jaerisch12c}, Corollary 1.6] \label{thm:amenable-fullpressure-j}Let
$\Sigma$ be finitely primitive and let $\left(\Sigma\times G,\sigma\rtimes\Psi\right)$
be an irreducible\emph{ }group-extended Markov system. Suppose that
$\varphi:\Sigma\rightarrow\R$ is H\"older continuous with $\mathcal{P}\left(\varphi\right)<\infty$
and that $\varphi$ is asymptotically symmetric with respect to $\Psi$.
If $G$ is amenable then $\mathcal{P}\left(\varphi,\Psi^{-1}\left(\id\right)\cap\Sigma^{*}\right)=\mathcal{P}\left(\varphi\right)$. 
\end{thm}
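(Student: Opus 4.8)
The plan is to prove the nontrivial inequality $\mathcal{P}\left(\varphi,\Psi^{-1}(\id)\cap\Sigma^*\right)\ge\mathcal{P}(\varphi)$, the reverse being immediate since $\Psi^{-1}(\id)\cap\Sigma^*\subset\Sigma^*$ forces the identity-fibre partition sums to be dominated by the full ones. First I would set $Z_n(g):=\sum_{\omega\in\Sigma^n\cap\Psi^{-1}(g)}\e^{S_\omega\varphi}$ and record, via Fact \ref{fac:criticalexponents-via-pressure} with $\Delta=\1$ and $\mathcal{C}=\Sigma^*$, that $\mathcal{P}(\varphi)=\limsup_n\frac1n\log\sum_{g\in G}Z_n(g)$, while Remark \ref{induced-is-gurevich} identifies $\mathcal{P}\left(\varphi,\Psi^{-1}(\id)\cap\Sigma^*\right)$ with the Gurevi\v c pressure of $\varphi\circ\pi_1$, that is, with $\limsup_n\frac1n\log Z_n(\id)$ (for finitely primitive $\Sigma$ one may pass to loops through a fixed symbol in the identity fibre). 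Since adding a constant to $\varphi$ shifts both pressures equally, and since $\mathcal{P}(\varphi)<\infty$ on the finitely primitive shift $\Sigma$ guarantees a Gibbs measure by Theorem \ref{thm:existence-of-gibbs-measures}, I would normalise so that $\mathcal{P}(\varphi)=0$ and the base transfer operator $\mathcal{L}_\varphi$ becomes a Markov operator; the bounded-distortion estimate of Fact \ref{fact-bounded-distortion-property} then makes $\sum_{g\in G}Z_n(g)$ grow at rate $\mathcal{P}(\varphi)=0$. The goal becomes $\limsup_n\frac1n\log Z_n(\id)\ge0$.

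The second step is a confinement lower bound. For a finite set $F\subset G$ with $\id\in F$, I would bound $Z_n(\id)$ from below by the weight of those loops in the group extension whose partial products $\Psi(\tau_1\cdots\tau_k)$ remain inside $F$ for all $k\le n$. These confined loops are generated by a restriction $\mathcal{L}_{\varphi,F}$ of the extended transfer operator to $\Sigma\times F$ in which every transition leaving $F$ is killed; writing $\mathcal{P}_F:=\log\rho(\mathcal{L}_{\varphi,F})$ for its confined pressure and using that the confined system is finitely irreducible on a component containing the identity fibre, one gets $Z_n(\id)\ge\e^{n\mathcal{P}_F-o(n)}$, hence $\limsup_n\frac1n\log Z_n(\id)\ge\mathcal{P}_F$. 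As $F$ is finite, $\mathcal{P}_F\le\mathcal{P}(\varphi)=0$, so it remains to show $\sup_F\mathcal{P}_F=0$.

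The heart of the argument, and the main obstacle, is to produce via amenability a family of finite sets $F$ with $\mathcal{P}_F\to0$. Here I would use that amenability of $G$ yields F{\o}lner sets: for the finite symmetric set $S:=\Psi(I)\cup\Psi(I)^{-1}$ of increments and any $\epsilon>0$ there is a finite $F\ni\id$ with $|sF\triangle F|<\epsilon|F|$ for all $s\in S$. To convert near-invariance into a lower bound on $\rho(\mathcal{L}_{\varphi,F})$, I would estimate a Rayleigh quotient $\langle\mathcal{L}_{\varphi,F}u,u\rangle_\pi/\langle u,u\rangle_\pi$ on $\Sigma\times F$, taking $u$ to be the restriction to $\Sigma\times F$ of the global equilibrium density and $\pi$ the equilibrium state; the F{\o}lner condition makes the boundary loss per step at most $\epsilon$, so the quotient is $\ge1-\epsilon'$ and thus $\mathcal{P}_F\ge-\delta(\epsilon)$ with $\delta(\epsilon)\to0$. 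The delicate point is that such a Rayleigh bound lower-bounds the spectral radius only for reversible (self-adjoint) operators, whereas $\mathcal{L}_{\varphi,F}$ is merely \emph{asymptotically} symmetric: this is exactly where the hypothesis that $\varphi$ is asymptotically symmetric with respect to $\Psi$ enters, furnishing a reference measure for which $\mathcal{L}_{\varphi,F}$ is self-adjoint up to factors $c_n$ with $c_n^{1/n}\to1$, which are harmless after letting $n\to\infty$. Controlling these factors uniformly in $F$, together with the countable-alphabet spectral theory (available because $\Sigma$ is finitely primitive, i.e. enjoys the big images and preimages property), is the technically demanding part.

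Combining the three steps, $\limsup_n\frac1n\log Z_n(\id)\ge\sup_F\mathcal{P}_F=0=\mathcal{P}(\varphi)$, which with the trivial reverse inequality gives $\mathcal{P}\left(\varphi,\Psi^{-1}(\id)\cap\Sigma^*\right)=\mathcal{P}(\varphi)$, as claimed. I expect essentially all the difficulty to sit in the F{\o}lner/Rayleigh step: the non-amenable case should manifest precisely as a spectral gap $\sup_F\mathcal{P}_F<0$, a Kesten-type phenomenon, so any correct proof must feed amenability into an approximate-eigenvector estimate for the extended operator rather than into a crude counting bound.
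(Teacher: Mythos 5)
First, note that this paper contains no proof of Theorem \ref{thm:amenable-fullpressure-j}: it is imported verbatim from the author's companion paper [Jae14b, Corollary 1.6], with Stadlbauer's Theorem 4.1 as the antecedent for weakly symmetric potentials. So the relevant comparison is with those proofs, and at the level of architecture your outline matches them: reduce to the lower bound, normalise so that $\mathcal{P}\left(\varphi\right)=0$ with a Gibbs measure, and feed amenability into a Kesten--Day-type approximate-eigenvector estimate in which the symmetry hypothesis stands in for self-adjointness. Your first two steps (the reduction, and the confinement bound $\limsup_{n}\frac{1}{n}\log Z_{n}\left(\id\right)\ge\mathcal{P}_{F}$, modulo the repairable issue that the confined system on $\Sigma\times F$ need not be irreducible, so $\mathcal{P}_{F}$ should be defined via loops through a fixed state in the identity fibre) are sound.

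The genuine gap is exactly at your step three, and your proposed mechanism there does not work as stated. Asymptotic symmetry (Definition \ref{def-asympt-symmetric}) compares the length-$n$ partition sum at $g$ with partition sums at $g^{-1}$ over the \emph{window} of lengths $\left[n-N_{n},n+N_{n}\right]$; because of this window, and because $c_{n}$ depends on $n$, it is a counting statement about $Z_{n}\left(g\right)$, not a symmetry of any fixed kernel with respect to any fixed reference measure, so no measure makes $\mathcal{L}_{\varphi,F}$ ``self-adjoint up to factors $c_{n}$''. Worse, even granting approximate self-adjointness, the Rayleigh-quotient step fails for positivity-preserving non-self-adjoint operators: for the biased nearest-neighbour walk on the amenable group $\Z$, the normalised indicator $\xi_{m}$ of $\left\{ -m,\dots,m\right\} $ gives $\langle P\xi_{m},\xi_{m}\rangle\rightarrow1$ while the $\ell^{2}$-spectral radius is $2\sqrt{pq}<1$. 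This is precisely the phenomenon the symmetry hypothesis must exclude --- amenability alone is insufficient, and Theorem \ref{thm:fullpressure-implies-amenable} provides only the converse direction --- so declaring the factors $c_{n}$ ``harmless after letting $n\rightarrow\infty$'' buries the entire content of the theorem. The cited proofs avoid confined spectral radii altogether: they work directly with the partition functions $Z_{n}\left(g\right)$, use amenability through F{\o}lner (almost-invariant) vectors to show that along a subsequence a subexponentially small proportion of the total mass $\sum_{g}Z_{n}\left(g\right)$ has $\Psi$-image in a fixed finite set $F$, and then use asymptotic symmetry at the counting level --- with explicit bookkeeping of $c_{n}$ and $N_{n}$ --- together with concatenation through finitely many connecting words to close such paths into identity-fibre loops, lower-bounding $Z_{2n+O\left(1\right)}\left(\id\right)$ by the same doubling device as in the proof of Proposition \ref{prop:weak-lower-deltahalf-bound}. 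Until your step three is replaced by an argument of this kind, your proposal is a correct reduction followed by a restatement of the theorem at its hardest point.
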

The next theorem provides a converse of the previous theorem and is
due to Stadlbauer.
\begin{thm}
[\cite{Stadlbauer11}, Theorem 5.4] \label{thm:fullpressure-implies-amenable}Let
$\Sigma$ be finitely primitive and let $\left(\Sigma\times G,\sigma\rtimes\Psi\right)$
be an \linebreak  irreducible\emph{ }group-extended Markov system.
Let $\varphi:\Sigma\rightarrow\R$ be H\"older continuous. \linebreak 
If $\mathcal{P}\left(\varphi,\Psi^{-1}\left(\id\right)\cap\Sigma^{*}\right)=\mathcal{P}\left(\varphi\right)<\infty$,
then $G$ is amenable.
\end{thm}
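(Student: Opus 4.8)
The plan is to recast the hypothesis as a statement about the $\ell^{2}$-spectral radius of a $G$-equivariant transfer operator and then to invoke a Kesten-type characterisation of amenability. First I would normalise the potential. Since the pressure is equivariant under adding constants, I may subtract $\mathcal{P}(\varphi)$ and assume $\mathcal{P}(\varphi)=0$; as $\Sigma$ is finitely primitive, the Ruelle--Perron--Frobenius machinery underlying Theorem \ref{thm:existence-of-gibbs-measures} supplies a positive eigenfunction $h$ with $\mathcal{L}_{\varphi}h=h$ and an eigenmeasure $m$ with $\mathcal{L}_{\varphi}^{*}m=m$. Replacing $\varphi$ by the cohomologous potential $\bar\varphi:=\varphi+\log h-\log(h\circ\sigma)$ turns $\mathcal{L}_{\bar\varphi}$ into a Markov operator ($\mathcal{L}_{\bar\varphi}\mathbf 1=\mathbf 1$) preserving the Gibbs measure $\mu:=h\,dm$, and leaves both $\mathcal{P}(\varphi)$ and $\mathcal{P}(\varphi,\Psi^{-1}(\id)\cap\Sigma^{*})$ unchanged. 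In this probabilistic normalisation the skew product $(\Sigma\times G,\sigma\rtimes\Psi)$ becomes a random walk on $G$ with internal states in $\Sigma$, driven by $\mu$.

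Next I would use Remark \ref{induced-is-gurevich} to read the hypothesis as an assertion about the Gurevi\v{c} pressure of $\bar\varphi\circ\pi_{1}$ on the skew product: it equals $\mathcal{P}(\bar\varphi)=0$. The point is to identify this Gurevi\v{c} pressure with the logarithm of the $\ell^{2}$-spectral radius of the induced operator $\hat{\mathcal{L}}$ acting on $L^{2}(\Sigma\times G,\mu\otimes c)$, where $c$ is counting measure on $G$, and which commutes with the regular $G$-action by the very construction of the group extension. Thus the equality $\mathcal{P}(\varphi,\Psi^{-1}(\id)\cap\Sigma^{*})=\mathcal{P}(\varphi)$ becomes the statement that $\hat{\mathcal{L}}$ has spectral radius $1$, the same as the base operator $\mathcal{L}_{\bar\varphi}$.

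Finally I would run the Kesten argument. Because $\hat{\mathcal{L}}$ is built from $\mathcal{L}_{\bar\varphi}$ through the regular representation of $G$, spectral radius $1$ forces the trivial representation to be weakly contained in the regular representation of $G$, i.e. there exist almost-invariant unit vectors, one of the standard characterisations of amenability. Concretely, from the return sums $u_{n}(g):=\sum_{\omega\in\Sigma^{n}\cap\Psi^{-1}(g)}\e^{S_{\omega}\bar\varphi}$ one manufactures vectors $\xi_{n}\in\ell^{2}(G)$ satisfying $\|\hat{\mathcal{L}}\xi_{n}\|/\|\xi_{n}\|\to1$ and shows that they are asymptotically translation-invariant, yielding a F{\o}lner sequence. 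The absence of any symmetry hypothesis on $\varphi$ is handled by symmetrising, i.e. passing to $\hat{\mathcal{L}}^{*}\hat{\mathcal{L}}$, which corresponds to a symmetric driving and to which Kesten's equivalence applies directly.

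The main obstacle is this last step: transferring spectral radius $1$ to a genuine F{\o}lner condition on $G$ when the driving walk has infinitely many internal states (the countable alphabet $I$). One cannot merely quote the classical $\ell^{2}$-theory for group-convolution operators; the internal Markov dynamics must be controlled uniformly so that the fibrewise return asymptotics genuinely reflect the group walk and the almost-invariance survives projection to the $G$-coordinate. This is precisely where finite primitivity of $\Sigma$ (equivalently the big-images-and-preimages property) together with the bounded-distortion estimate of Fact \ref{fact-bounded-distortion-property} and the Gibbs property from Theorem \ref{thm:existence-of-gibbs-measures} enter, furnishing the uniform mixing needed to decouple the internal state from the $G$-coordinate and to complete the construction of the almost-invariant vectors.
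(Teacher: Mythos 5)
Note first that the paper does not prove this theorem at all: it is quoted verbatim from Stadlbauer [Sta13, Theorem 5.4], so your attempt must be measured against that source. Stadlbauer's proof is indeed a Kesten--Day argument, but it works directly with the weighted word counts $a_{n}(g):=\sum_{\omega\in\Sigma^{n}\cap\Psi^{-1}(g)}\e^{S_{\omega}\varphi-n\mathcal{P}(\varphi)}$: finite primitivity and bounded distortion give a patching inequality of the form $a_{n+m+k}(gh)\ge C\, a_{n}(g)\,a_{m}(h)$ (inserting connecting words of bounded length $k$), and the hypothesis $\limsup_{n}a_{n}(\id)^{1/n}=1$ then lets one build unit vectors $\xi_{n}\in\ell^{2}(G)$ with $\langle\lambda(g)\xi_{n},\xi_{n}\rangle\rightarrow1$ for every $g$, i.e.\ almost-invariant vectors for the left regular representation, whence amenability by Day's criterion. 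Your high-level plan is the same in outline, and your normalisation step is legitimate: under finite primitivity and $\mathcal{P}\left(\varphi\right)<\infty$ the Ruelle--Perron--Frobenius eigenfunction is bounded away from $0$ and $\infty$, so replacing $\varphi$ by the cohomologous $\bar{\varphi}$ changes neither $\mathcal{P}\left(\varphi\right)$ nor $\mathcal{P}\left(\varphi,\Psi^{-1}\left(\id\right)\cap\Sigma^{*}\right)$.

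Two of your steps, however, are genuinely flawed. First, the asserted \emph{identification} of the Gurevi\v{c} pressure of the extension with the logarithm of the $\ell^{2}$-spectral radius of $\hat{\mathcal{L}}$ is false in general: take the full shift on two symbols, $G=\Z$, $\Psi$ sending the symbols to $\pm1$, and normalised weights $p\neq q$ with $p+q=1$. The return sums grow like $\left(2\sqrt{pq}\right)^{n}$, so the Gurevi\v{c} pressure is $\log\left(2\sqrt{pq}\right)<0$, while the lifted operator has norm and spectral radius $1$ because $\Z$ is amenable. Only the inequality (Gurevi\v{c} pressure $\le\log\rho(\hat{\mathcal{L}})\le0$) is true; your argument can be repaired to use this inequality together with the hypothesis, but as written the step is wrong. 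Second, and more seriously, the decisive step is missing: $\hat{\mathcal{L}}^{*}\hat{\mathcal{L}}$ is \emph{not} a convolution operator on $\ell^{2}(G)$ --- it retains the internal $\Sigma$-dynamics --- so Kesten's equivalence does not ``apply directly''; moreover the hypothesis controls the return asymptotics of $\hat{\mathcal{L}}$, not of $\hat{\mathcal{L}}^{*}\hat{\mathcal{L}}$, and relating the two already requires the decoupling estimates you defer. Likewise, $\Vert\hat{\mathcal{L}}\Vert=1$ on $L^{2}(\mu\otimes c)$ does not formally yield weak containment of the trivial representation in $\lambda_{G}$: extracting almost-invariant vectors in $\ell^{2}(G)$ from near-maximising vectors of a fibred, non-self-adjoint operator is precisely the content of the theorem. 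You correctly name the ingredients (big images and preimages, bounded distortion, the Gibbs property), but the crucial estimates --- the patching inequality and the Cauchy--Schwarz argument forcing $\langle\lambda(g)\xi_{n},\xi_{n}\rangle\rightarrow1$ --- are acknowledged as ``the main obstacle'' rather than carried out, so the core of the proof is absent.
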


\subsection{Recurrence and lower bounds for pressure\label{sub:Recurrence-and-lower}}

Let $\Sigma$ be finitely primitive and let $\varphi:\Sigma\rightarrow\R$
be H\"older continuous. Let $\left(\Sigma\times G,\sigma\rtimes\Psi\right)$
denote an irreducible\emph{ }group-extended Markov system. The potential
$\varphi\circ\pi_{1}:\Sigma\times G\rightarrow\R$ is \emph{recurrent}
if $\mathcal{P}\left(\varphi,\Psi^{-1}\left(\id\right)\cap\Sigma^{*}\right)<\infty$
and 
\[
\sum_{n\in\N}\e^{-n\mathcal{P}\left(\varphi,\Psi^{-1}\left(\id\right)\cap\Sigma^{*}\right)}\sum_{\omega\in\Sigma^{n}\cap\Psi^{-1}\left(\id\right)}\e^{S_{\omega}\varphi}=\infty.
\]

\begin{rem*}
It follows from Remark \ref{induced-is-gurevich} that this definition
of a recurrent potential \foreignlanguage{english}{coincides with
Sarig's definition of a recurrent potential (}\cite[Definition 1]{MR1818392}).
\end{rem*}
\selectlanguage{english}%
In order to give lower bounds on $\mathcal{P}\left(\varphi,\Psi^{-1}\left(\id\right)\cap\Sigma^{*}\right)$,
we need the following theorem. 
\selectlanguage{british}%
\begin{thm}
[\cite{Jaerisch12b}, Corollary 1.2, Remark 1.6] \label{thm:recurrence-implies-amenable}Let
$\Sigma$ be finitely primitive and let $\left(\Sigma\times G,\sigma\rtimes\Psi\right)$
be an irreducible\emph{ }group-extended Markov system.\foreignlanguage{english}{\textup{
Let}} $\varphi:\Sigma\rightarrow\R$ be H\"older continuous with
\linebreak \foreignlanguage{english}{$\mathcal{P}\left(\varphi,\Psi^{-1}\left(\id\right)\cap\Sigma^{*}\right)<\infty$.
}If $\varphi\circ\pi_{1}$ is recurrent, then $G$ is amenable. 
\end{thm}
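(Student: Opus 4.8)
The plan is to translate the statement into Sarig's thermodynamic formalism for the countable Markov shift $\Sigma\times G$ and then to produce a left-invariant mean on $G$. By Remark~\ref{induced-is-gurevich} the induced pressure $P:=\mathcal{P}\left(\varphi,\Psi^{-1}\left(\id\right)\cap\Sigma^{*}\right)$ equals the Gurevi\v{c} pressure of $\varphi\circ\pi_{1}$, and by the remark following the definition of recurrence the hypothesis that $\varphi\circ\pi_{1}$ is recurrent is precisely Sarig's recurrence at the Gurevi\v{c} pressure. Writing $Z_{n}(g):=\sum_{\omega\in\Sigma^{n}\cap\Psi^{-1}(g)}\e^{S_{\omega}\varphi}$, the hypotheses read $P<\infty$ and $\sum_{n}\e^{-nP}Z_{n}(\id)=\infty$. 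Since one always has $\mathcal{P}\left(\varphi,\Psi^{-1}\left(\id\right)\cap\Sigma^{*}\right)\le\mathcal{P}\left(\varphi\right)$ (one is restricting the collection of words summed over), the cleanest route is to show that recurrence forces the equality $P=\mathcal{P}\left(\varphi\right)$ and then to invoke Stadlbauer's Theorem~\ref{thm:fullpressure-implies-amenable}, which delivers amenability of $G$ at once.

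First I would set up the transfer operator $L=L_{\varphi\circ\pi_{1}}$ on $\Sigma\times G$ and apply Sarig's generalised Ruelle--Perron--Frobenius theorem for recurrent potentials on finitely primitive (hence big-images-and-preimages) countable Markov shifts: recurrence at $P$ provides a conservative, ergodic, $\sigma$-finite $\left(\sigma\rtimes\Psi\right)$-invariant measure $\tilde{m}$, together with a positive eigenfunction $h$ and an eigenmeasure $\nu$ satisfying $Lh=\e^{P}h$ and $L^{*}\nu=\e^{P}\nu$. The decisive structural feature is that the left $G$-action $R_{a}\colon\left(\tau,g\right)\mapsto\left(\tau,ag\right)$ commutes with $\sigma\rtimes\Psi$ and with $L$, because $\varphi\circ\pi_{1}$ does not see the $G$-coordinate. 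By conservativity and ergodicity the conservative invariant measure is unique up to a scalar, so $\tilde{m}$ is $G$-invariant; equivalently, the ratios $h\left(\tau,ag\right)/h\left(\tau,g\right)$ give a positive harmonic function for the fibre random walk on $G$ determined by $\Psi$ and $\varphi$, normalised exactly at the critical exponent $\e^{P}$.

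From here I would extract amenability by a Reiter/Kesten-type argument. Recurrence, that is $\sum_{n}\e^{-nP}Z_{n}(\id)=\infty$, says that the return series of this fibre walk to the identity diverges while its spectral radius is $\e^{P}$; by irreducibility the divergence propagates to every fibre, since prepending a fixed word $\beta$ with $\Psi\left(\beta\right)=g$ (joined by an admissible connector of bounded length, supplied by finite primitivity) yields $\e^{-\left(n+m_{g}\right)P}Z_{n+m_{g}}(g)\ge c_{g}\,\e^{-nP}Z_{n}(\id)$ with $m_{g}$ bounded, whence $\sum_{n}\e^{-nP}Z_{n}(g)=\infty$ for all $g\in G$. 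Working with the first-return partition functions to the identity fibre, recurrence is equivalent to the total return mass being $1$, and the normalised partial Green sums along the identity fibre furnish probability measures on $G$ whose left translates differ only by boundary blocks of bounded length; the divergence forces these boundary contributions to be negligible in $\ell^{1}$, giving asymptotic invariance. This is Reiter's condition, so $G$ is amenable. Equivalently, one shows that $\nu$ may be chosen $G$-invariant and projects onto the equilibrium measure on $\Sigma$, forcing $P=\mathcal{P}\left(\varphi\right)$ and letting Theorem~\ref{thm:fullpressure-implies-amenable} conclude.

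The main obstacle is exactly this last passage from the divergent Green series to genuine asymptotic invariance. A naive normalisation by the full partition function $Z_{n}=\sum_{g}Z_{n}(g)$ fails whenever $P<\mathcal{P}\left(\varphi\right)$, because then $\e^{-nP}Z_{n}$ grows geometrically and the boundary increments are comparable to the whole sum rather than negligible; the purpose of working relative to the identity fibre is that recurrence makes the relevant return series only barely divergent, so that the bounded-length reindexing induced by left translation perturbs the measures by a vanishing proportion. Carrying this out rigorously demands uniform control, simultaneously across all fibres, of the admissibility connectors furnished by finite primitivity and of the bounded-distortion constants furnished by H\"older continuity of $\varphi$ (Fact~\ref{fact-bounded-distortion-property}); marshalling these uniform estimates is the technical heart of the proof.
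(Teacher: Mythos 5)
Your proposal has a genuine gap, and its main line of attack would prove something false. The route you call ``cleanest'' --- deduce $\mathcal{P}\left(\varphi,\Psi^{-1}\left(\id\right)\cap\Sigma^{*}\right)=\mathcal{P}\left(\varphi\right)$ from recurrence and then invoke Theorem \ref{thm:fullpressure-implies-amenable} --- is blocked by the paper itself: Proposition \ref{prop:recurrent-fullpressure-symmetric} states that for a recurrent potential this equality holds \emph{if and only if} $\varphi$ is symmetric on average with respect to $\Psi$, a hypothesis you do not have. A concrete witness: let $\Sigma$ be the full shift on two symbols, $G=\Z$, $\Psi\left(\pm\right)=\pm1$, and let $\varphi$ be locally constant with values $\log p$ and $\log q$, where $p+q=1$ and $p\neq q$. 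Then $\sum_{\omega\in\Sigma^{n}\cap\Psi^{-1}\left(\id\right)}\e^{S_{\omega}\varphi}$ is the return probability of the biased nearest-neighbour walk on $\Z$, which at time $n=2m$ is comparable to $\left(4pq\right)^{m}m^{-1/2}$; hence $\varphi\circ\pi_{1}$ is recurrent with induced pressure $\log\bigl(2\sqrt{pq}\bigr)$ strictly below $\mathcal{P}\left(\varphi\right)=0$, even though $G=\Z$ is amenable. Your own third paragraph concedes that $P<\mathcal{P}\left(\varphi\right)$ can occur, so the closing sentence (``forcing $P=\mathcal{P}\left(\varphi\right)$ and letting Theorem \ref{thm:fullpressure-implies-amenable} conclude'') is internally inconsistent with the rest of the proposal. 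The same example refutes your measure-theoretic shortcut: uniqueness of Sarig's eigendata for a recurrent potential yields only $R_{a}^{*}\tilde{m}=c\left(a\right)\tilde{m}$ for some homomorphism $c:G\rightarrow\R^{+}$ (for the biased walk the eigenmeasure is exponentially tilted across the fibres), so $\tilde{m}$ need not be $G$-invariant, and $h\left(\tau,ag\right)/h\left(\tau,g\right)$ is in general a nontrivial character rather than $1$.

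What then remains is the Reiter sketch, which is precisely where the theorem lives, and there the decisive step is asserted rather than proved. Finite primitivity together with bounded distortion (Fact \ref{fact-bounded-distortion-property}) gives only one-sided Harnack-type inequalities $G_{n+m_{a}}\left(ag\right)\ge c_{a}\,G_{n}\left(g\right)$ for the partial Green sums $G_{n}\left(g\right):=\sum_{k\le n}\e^{-kP}\sum_{\omega\in\Sigma^{k}\cap\Psi^{-1}\left(g\right)}\e^{S_{\omega}\varphi}$; Reiter's condition demands two-sided, asymptotically sharp comparisons, and without any symmetry hypothesis these inequalities cannot be reversed --- in the tilted example above, left translation changes each fibre by a fixed multiplicative factor which the bare divergence of $G_{n}\left(\id\right)$ does not wash out. (A smaller inaccuracy: $\Sigma\times G$ is not finitely primitive when $G$ is infinite, so BIP is unavailable there; Sarig's recurrent RPF theorem fortunately does not require it.) Note also that the paper does not prove this theorem at all: it imports it from \cite{Jaerisch12b} (Corollary 1.2 and Remark 1.6), where the passage from recurrence to amenability is a substantially more delicate transfer-operator argument. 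Your translation of the hypotheses into Sarig's framework via Remark \ref{induced-is-gurevich}, and the observation that the left $G$-action commutes with the dynamics, are both correct; but the actual content of the theorem --- extracting an invariant mean from recurrence without symmetry --- is missing, and both shortcuts you offer in its place fail.
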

For a recurrent potential, we can characterise when $\mathcal{P}\left(\varphi,\Psi^{-1}\left(\id\right)\cap\Sigma^{*}\right)$
and $\mathcal{P}\left(\varphi\right)$ coincide.
\begin{prop}
[\cite{Jaerisch12b}, Proposition 1.5, Remark 1.6] \label{prop:recurrent-fullpressure-symmetric}Let
$\Sigma$ be finitely primitive and let \linebreak $\left(\Sigma\times G,\sigma\rtimes\Psi\right)$
be an irreducible\emph{ }group-extended Markov system.\foreignlanguage{english}{\textup{
Let}} $\varphi:\Sigma\rightarrow\R$ be H\"older continuous with
\foreignlanguage{english}{$\mathcal{P}\left(\varphi,\Psi^{-1}\left(\id\right)\cap\Sigma^{*}\right)<\infty$.
}If $\varphi\circ\pi_{1}$ is recurrent, then we have that $\mathcal{P}\left(\varphi,\Psi^{-1}\left(\id\right)\cap\Sigma^{*}\right)=\mathcal{P}\left(\varphi\right)$
if and only if $\varphi$ is symmetric on average with respect to
$\Psi$. \end{prop}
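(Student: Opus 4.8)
The plan is to lift the statement to the Ruelle operator $\mathcal{L}$ of the skew product $\sigma\rtimes\Psi$ on $\Sigma\times G$ associated to $\varphi\circ\pi_{1}$, and to exploit the recurrence hypothesis through Sarig's generalised Ruelle--Perron--Frobenius theory for recurrent potentials. By Remark \ref{induced-is-gurevich} the quantity $\mathcal{P}\left(\varphi,\Psi^{-1}\left(\id\right)\cap\Sigma^{*}\right)$ is the Gurevi\v c pressure of $\varphi\circ\pi_{1}$, so recurrence of $\varphi\circ\pi_{1}$ together with finiteness of this pressure supplies a $\lambda$-conformal measure $\nu$ (with $\lambda=\e^{\mathcal{P}\left(\varphi,\Psi^{-1}\left(\id\right)\cap\Sigma^{*}\right)}$), a positive eigenfunction $h$ with $\mathcal{L}h=\lambda h$ finite $\nu$-almost everywhere, and a conservative $\sigma\rtimes\Psi$-invariant measure $\mathrm{d}\mu=h\,\mathrm{d}\nu$; crucially, $h$ and $\nu$ are unique up to scaling. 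First I would record the elementary fact that the left $G$-action $l_{a}(\tau,g):=(\tau,ag)$ commutes with $\sigma\rtimes\Psi$, and hence with $\mathcal{L}$ (the potential $\varphi\circ\pi_{1}$ ignores the fibre coordinate). Therefore $h\circ l_{a}$ is again an eigenfunction and $(l_{a})_{*}\nu$ again $\lambda$-conformal.

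By the uniqueness above there is a homomorphism $\rho\colon G\to\R^{+}$ with $h(\tau,g)=\rho(g)\,h(\tau,\id)$, and a companion character attached to $\nu$. Writing $h_{0}:=h(\cdot,\id)$ and unwinding $\mathcal{L}h=\lambda h$ on the fibre over $\id$ shows that $h_{0}$ is a positive eigenfunction, with the same eigenvalue $\lambda$, of the \emph{twisted} base operator for the potential $\varphi-\log\rho\circ\Psi$; since $\Sigma$ is finitely primitive this forces $\mathcal{P}\left(\varphi,\Psi^{-1}\left(\id\right)\cap\Sigma^{*}\right)=\mathcal{P}\left(\varphi-\log\rho\circ\Psi\right)$. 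I would then study the convex, real-analytic function
\[
F(s):=\mathcal{P}\left(\varphi-s\,\log\rho\circ\Psi\right),\qquad s\in\R,
\]
for which $F(0)=\mathcal{P}(\varphi)$ and $F(1)=\mathcal{P}\left(\varphi,\Psi^{-1}\left(\id\right)\cap\Sigma^{*}\right)\le F(0)$. Because $\mu$ is conservative, the factor $\R$-extension of the base equilibrium state by the real cocycle $\log\rho\circ\Psi$ is conservative, so an Atkinson-type theorem forces the drift $\int\log\rho\circ\Psi\,\mathrm{d}\mu_{0}$ (with $\mu_{0}$ the equilibrium state at $s=1$) to vanish, i.e. $F'(1)=0$ and $s=1$ minimises $F$. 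Thus $F$ is non-increasing on $[0,1]$, and equality $\mathcal{P}\left(\varphi,\Psi^{-1}\left(\id\right)\cap\Sigma^{*}\right)=\mathcal{P}(\varphi)$ holds if and only if $F$ is constant on $[0,1]$. By the standard dichotomy (pressure is affine in a direction only when that potential is cohomologous to a constant), this happens precisely when $\log\rho\circ\Psi$ is a coboundary plus a constant; evaluating along closed loops over $\id$ (whose lengths lie in $p\N$) kills the constant and yields $\rho\equiv 1$. This gives the clean reduction $\mathcal{P}\left(\varphi,\Psi^{-1}\left(\id\right)\cap\Sigma^{*}\right)=\mathcal{P}(\varphi)\iff\rho\equiv 1$.

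It then remains to prove $\rho\equiv 1\iff\varphi$ is symmetric on average. Normalising $\widehat{Z}_{n}(g):=\e^{-n\mathcal{P}\left(\varphi,\Psi^{-1}\left(\id\right)\cap\Sigma^{*}\right)}\sum_{\omega\in\Sigma^{n}\cap\Psi^{-1}(g)}\e^{S_{\omega}\varphi}$, symmetry on average is exactly the uniform boundedness of the Cesàro ratios $\sum_{k\le n}\widehat{Z}_{kp}(g)\big/\sum_{k\le n}\widehat{Z}_{kp}(g^{-1})$. Pairing the eigenfunction identity against the fibre over $\id$ gives the summability constraint $\sum_{g}\rho(g)^{-1}\widehat{Z}_{n}(g)=O(1)$, bounded also below along $n\in p\N$ by conservativity. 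I would then invoke a Hopf ratio ergodic theorem for the conservative system $(\Sigma\times G,\mu)$, applied along the progression $p\N$ dictated by the period $p$ and using finite primitivity together with the bounded-distortion estimate of Fact \ref{fact-bounded-distortion-property} to glue admissible words by bounded-length bridges, so that the Cesàro behaviour of $\widehat{Z}_{kp}(g)$ is governed, uniformly in $g$, by the equivariance characters of $h$ and $\nu$. When $\rho\equiv 1$ these characters are trivial and the averages for $g$ and $g^{-1}$ are comparable, giving symmetry on average; when $\rho(g_{*})\ne 1$ for some $g_{*}$, the ratios along the geodesic ray $g_{*}^{m}$ grow like $\rho(g_{*})^{2m}\to\infty$, so symmetry on average fails.

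The step I expect to be the main obstacle is this last one: establishing the uniform (in $g\in G$) Cesàro/ratio asymptotics of $\widehat{Z}_{kp}(g)$ and showing that they are controlled by the character $\rho$. This lands squarely in infinite ergodic theory, since $\mu$ is an infinite measure whenever $G$ is infinite, so the relevant renewal and ratio-ergodic statements must be proved for the conservative skew product rather than quoted from the finite-measure theory. The difficulty is sharpened by the absence of any a priori reversal symmetry between $g$ and $g^{-1}$: the comparison of $\widehat{Z}_{kp}(g)$ with $\widehat{Z}_{kp}(g^{-1})$ must be forced entirely through the equivariant eigendata, and reconciling the eigenfunction character $\rho$ with the conformal-measure character while keeping all gluing error terms uniform over the infinitely many group elements is the genuine technical crux.
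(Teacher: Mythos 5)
You should first note that the paper contains no proof of this proposition: it is imported verbatim from \cite{Jaerisch12b} (Proposition 1.5 together with Remark 1.6), so your attempt can only be compared with that external source. In outline your strategy is the natural one and matches the architecture of the cited proof: Sarig's generalised Ruelle--Perron--Frobenius theorem \cite{MR1818392} for the recurrent potential $\varphi\circ\pi_{1}$ (legitimate here via Remark \ref{induced-is-gurevich}), the characters produced by uniqueness of the eigendata under the commuting left $G$-action, and a Hopf/Chacon--Ornstein ratio argument to read off the Ces\`aro behaviour of the partition functions $\widehat{Z}_{kp}(g)$. Nevertheless, as written the proposal has genuine gaps.

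First, the step ``since $\Sigma$ is finitely primitive this forces $\mathcal{P}\left(\varphi,\Psi^{-1}\left(\id\right)\cap\Sigma^{*}\right)=\mathcal{P}\left(\varphi-\log\rho\circ\Psi\right)$'' is unjustified: on a countable alphabet the mere existence of a positive eigenfunction does not pin the eigenvalue to the pressure (positive $\lambda$-harmonic functions of transient random walks exist for every $\lambda$ at and above the spectral radius, and random walks are exactly group extensions of this type). The pinning mechanism lives on the measure side: the fibre decomposition $\nu\left(\cdot\times\left\{ g\right\} \right)=\tilde{\rho}\left(g\right)\nu_{0}$ shows that $\nu_{0}$ is a conformal measure for the potential $\varphi+\log\tilde{\rho}\circ\Psi$, and on a finitely primitive shift a conformal measure has the Gibbs property, which forces its eigenvalue to equal $\e^{\mathcal{P}\left(\varphi+\log\tilde{\rho}\circ\Psi\right)}$. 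This exposes a second, structural gap: you have two characters, the eigenfunction character $\rho$ and the eigenmeasure character $\tilde{\rho}$, and they need not coincide (for the biased random walk they are mutually inverse). Your convexity half concludes ``pressure equality iff $\rho\equiv1$'', while the ratio half governs symmetry on average through $\tilde{\rho}$; the proposed glue, an Atkinson-type zero-drift argument, is not available as stated, since Atkinson's theorem concerns probability-preserving systems whereas the conservative measure $h\,\mathrm{d}\nu$ is infinite and its base marginal is not the equilibrium state you invoke. The argument should be run consistently on the measure side, where both halves close: $\tilde{\rho}\equiv1$ gives $\mathcal{P}\left(\varphi,\Psi^{-1}\left(\id\right)\cap\Sigma^{*}\right)=\mathcal{P}\left(\varphi\right)$ at once, and conversely equality plus convexity of $s\mapsto\mathcal{P}\left(\varphi+s\log\tilde{\rho}\circ\Psi\right)$ (which dominates the loop pressure for \emph{every} $s$, because $S_{\omega}\left(\log\tilde{\rho}\circ\Psi\right)=0$ for $\omega\in\Psi^{-1}\left(\id\right)\cap\Sigma^{*}$) yields cohomological triviality, after which loops kill the constant and bridges kill the character, as you indicate.

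Second, you misidentify the crux. No uniform-in-$g$ renewal theory is needed: the definition of symmetry on average is a supremum over $g$ of limsups, so one may work with each $g$ separately, and for fixed $g$ the Chacon--Ornstein ratio theorem applied to the conservative ergodic measure $h\,\mathrm{d}\nu$, upgraded from almost-everywhere to pointwise at the base point by bounded distortion (Fact \ref{fact-bounded-distortion-property}), gives the exact limit of $\sum_{k\le n}\widehat{Z}_{kp}\left(g\right)\big/\sum_{k\le n}\widehat{Z}_{kp}\left(g^{-1}\right)$ as $\tilde{\rho}\left(g\right)^{-2}$; finiteness of the supremum then says the subgroup $\tilde{\rho}\left(G\right)\subset\R^{+}$ is bounded below, hence trivial. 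What your sketch does \emph{not} address, and what actually requires care, is that Sarig's theorem is stated for topologically mixing shifts while $\Sigma\times G$ is only irreducible with period $p$ (the reduction to $\sigma^{p}$ on cyclic components is precisely the content of Remark 1.6 of \cite{Jaerisch12b}), together with ergodicity of $h\,\mathrm{d}\nu$ and finiteness of the fibre masses $\int_{\Sigma\times\left\{ g\right\} }h\,\mathrm{d}\nu$, all of which your ratio step uses silently.
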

\begin{rem*}
By combining Theorem \ref{thm:amenable-fullpressure-j} with Proposition
\ref{prop:recurrent-fullpressure-symmetric}, we see that, if $\varphi\circ\pi_{1}$
is recurrent and if $\varphi$ is asymptotically symmetric with respect
to $\Psi$, then $\varphi$ is symmetric on average with respect to
$\Psi$.
\end{rem*}
The next result gives a lower bound on $\mathcal{P}\left(\varphi,\Psi^{-1}\left(\id\right)\cap\Sigma^{*}\right)$.
A similar result to the first assertion is given in the author's thesis
(\cite[Theorem 5.3.11]{JaerischDissertation11}). The second assertion
is  inspired by \cite[Lemma 5.1]{Jaerisch11a}, where a locally constant
potential $\varphi$ is considered, and makes use of Theorem \ref{thm:recurrence-implies-amenable}. 
\begin{prop}
\label{prop:weak-lower-deltahalf-bound}Let $\Sigma$ be finitely
primitive and let $\left(\Sigma\times G,\sigma\rtimes\Psi\right)$
be an irreducible  group-extended Markov system.\emph{ }For each H\"older
continuous potential $\varphi:\Sigma\rightarrow\R$ the following
holds. 
\begin{enumerate}
\item \label{enu:weak-delta-half-bound}If $\varphi$ is asymptotically
symmetric with respect to $\Psi$ then $2\mathcal{P}\left(\varphi,\Psi^{-1}\left(\id\right)\cap\Sigma^{*}\right)\ge\mathcal{P}\left(2\varphi\right)$. 
\item \label{enu:strong-delta-half-bound}If $\varphi$ is symmetric with
respect to $\Psi$ and $\mathcal{P}\left(2\varphi\right)<\infty$,
then we have that 
\[
2\mathcal{P}\left(\varphi,\Psi^{-1}\left(\id\right)\cap\Sigma^{*}\right)=\mathcal{P}\left(2\varphi\right)\mbox{ if and only if }2\mathcal{P}\left(\varphi\right)=\mathcal{P}\left(2\varphi\right).
\]

\end{enumerate}
\end{prop}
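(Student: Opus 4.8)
The plan is to prove both assertions by a Cauchy--Schwarz argument that relates the three quantities
\[
A:=\mathcal{P}\bigl(\varphi,\Psi^{-1}(\id)\cap\Sigma^{*}\bigr),\qquad B:=\mathcal{P}(\varphi),\qquad C:=\mathcal{P}(2\varphi),
\]
by means of the coset sums $Z_{n}(g):=\sum_{\omega\in\Sigma^{n}\cap\Psi^{-1}(g)}\e^{S_{\omega}\varphi}$ for $g\in G$ and $n\in\N$. Two elementary facts will be used repeatedly: since $\Psi^{-1}(\id)\cap\Sigma^{*}\subseteq\Sigma^{*}$, the monotonicity in Fact \ref{fac:criticalexponents-via-pressure} gives $A\le B$; and since $\sum_{i}a_{i}^{2}\le\bigl(\sum_{i}a_{i}\bigr)^{2}$ for non-negative $a_{i}$, passing to growth rates yields $C\le 2B$.

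For assertion (\ref{enu:weak-delta-half-bound}) I would first observe, using bounded distortion (Fact \ref{fact-bounded-distortion-property}) and $\sum_{i}a_{i}^{2}\le(\sum_{i}a_{i})^{2}$, that
\[
\sum_{\omega\in\Sigma^{n}}\e^{2S_{\omega}\varphi}\asymp\sum_{g\in G}\sum_{\omega\in\Sigma^{n}\cap\Psi^{-1}(g)}\bigl(\e^{S_{\omega}\varphi}\bigr)^{2}\le\sum_{g\in G}Z_{n}(g)^{2},
\]
so that the exponential growth rate of $\sum_{g}Z_{n}(g)^{2}$ is at least $C$. Next I apply asymptotic symmetry, $Z_{n}(g)\le c_{n}\sum_{n-N_{n}\le m\le n+N_{n}}Z_{m}(g^{-1})$, and multiply by $Z_{n}(g)$, obtaining
\[
\sum_{g}Z_{n}(g)^{2}\le c_{n}\sum_{n-N_{n}\le m\le n+N_{n}}\;\sum_{g}Z_{n}(g)Z_{m}(g^{-1}).
\]
For each $m$ I would then bound $\sum_{g}Z_{n}(g)Z_{m}(g^{-1})$ from above by the induced sum $\sum_{\omega\in\Sigma^{M}\cap\Psi^{-1}(\id)}\e^{S_{\omega}\varphi}$ with $M\asymp n+m$: a word in $\Psi^{-1}(g)$ followed by a word in $\Psi^{-1}(g^{-1})$ concatenates to a word in $\Psi^{-1}(\id)$, and by Fact \ref{fact-bounded-distortion-property} the weights multiply up to a uniform constant. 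Taking $\tfrac{1}{2n}\log$ and letting $n\to\infty$, the factor $c_{n}$ (because $c_{n}^{1/n}\to1$) and the number $2N_{n}+1$ of summands (because $N_{n}/n\to0$) are subexponential and disappear, leaving $\tfrac12 C\le A$, i.e. $2A\ge C$. The main obstacle in this part is the concatenation step: for a general finitely primitive $\Sigma$ admissibility must be restored by inserting connecting words of bounded length, which in the group-extended setting shift the group element by one of finitely many values and therefore have to be reconciled with the exact $g\leftrightarrow g^{-1}$ pairing built into the symmetry condition (as in \cite[Theorem 5.3.11]{JaerischDissertation11}); since these corrections are bounded multiplicative constants, they do not affect the growth rate.

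For assertion (\ref{enu:strong-delta-half-bound}), one implication is immediate: assume $2B=C$. Part (\ref{enu:weak-delta-half-bound}) (valid since symmetry implies asymptotic symmetry) gives $2A\ge C=2B$, so $A\ge B$; combined with $A\le B$ this forces $A=B$ and hence $2A=2B=C$. For the converse, assume $2A=C$. Because $C\le 2B$ and $A\le B$ always hold, the desired equality $2B=C$ is equivalent to $A=B$. I would obtain $A=B$ from the recurrence theory of Section \ref{sec:Group-Extended-Markov}: exploiting now the boundedness of $(c_{n})$ that the symmetry hypothesis affords, the inequality chain of part (\ref{enu:weak-delta-half-bound}) becomes tight enough under $2A=C$ to force $\sum_{n}\e^{-nA}\sum_{\omega\in\Sigma^{n}\cap\Psi^{-1}(\id)}\e^{S_{\omega}\varphi}=\infty$, i.e. that $\varphi\circ\pi_{1}$ is recurrent. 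Theorem \ref{thm:recurrence-implies-amenable} then shows that $G$ is amenable, and Theorem \ref{thm:amenable-fullpressure-j} (applicable as $\varphi$ is symmetric, hence asymptotically symmetric) yields $A=\mathcal{P}\bigl(\varphi,\Psi^{-1}(\id)\cap\Sigma^{*}\bigr)=\mathcal{P}(\varphi)=B$, completing the equivalence. The hard part here is precisely upgrading the exponential-rate estimate of part (\ref{enu:weak-delta-half-bound}) to the sharper series-level divergence required for recurrence; this is where the bounded (not merely subexponential) sequence $(c_{n})$ is indispensable and where the characterisation of recurrent potentials from \cite{Jaerisch12b} enters.
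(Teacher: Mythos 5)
Your outline tracks the paper's own proof closely. For part (\ref{enu:weak-delta-half-bound}) it is the same argument in a slightly different order: the paper first concatenates, choosing for every pair $\omega_{1},\omega_{2}$ a connecting word $\gamma(\omega_{1},\omega_{2})$ from a \emph{finite} set $B\subset\Psi^{-1}(\id)\cap\Sigma^{*}$ --- whose existence uses finite primitivity of $\Sigma$ \emph{together with irreducibility of the group extension}, and this is precisely how your ``main obstacle'' is resolved (with arbitrary connecting words from finite primitivity of $\Sigma$ alone, the concatenation would land in $\Psi^{-1}(g\Psi(\gamma)g^{-1})$, and these conjugates range over an uncontrolled set as $g$ varies, so it is essential that $\gamma$ can be taken in $\Psi^{-1}(\id)$; the correction is then the uniform constant $\card(B)\e^{3C_{\varphi}-C_{B}}$, as you anticipate). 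The cosetwise use of $\sum_{i}a_{i}^{2}\le\bigl(\sum_{i}a_{i}\bigr)^{2}$ and the subexponentiality of $(c_{n})$ and $(N_{n})$ then give $2\mathcal{P}\left(\varphi,\Psi^{-1}\left(\id\right)\cap\Sigma^{*}\right)\ge\mathcal{P}\left(2\varphi\right)$ exactly as in the paper. Your easy direction of part (\ref{enu:strong-delta-half-bound}) and the chain recurrence $\Rightarrow$ amenability (Theorem \ref{thm:recurrence-implies-amenable}) $\Rightarrow$ full pressure (Theorem \ref{thm:amenable-fullpressure-j}) also coincide with the paper's proof.

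The one genuine gap is the step you yourself flag as hard and then only assert: that under $2\mathcal{P}\left(\varphi,\Psi^{-1}\left(\id\right)\cap\Sigma^{*}\right)=\mathcal{P}\left(2\varphi\right)$ the estimates of part (\ref{enu:weak-delta-half-bound}) ``become tight enough'' to force divergence of $\sum_{n}\e^{-n\mathcal{P}\left(\varphi,\Psi^{-1}\left(\id\right)\cap\Sigma^{*}\right)}\sum_{\omega\in\Sigma^{n}\cap\Psi^{-1}\left(\id\right)}\e^{S_{\omega}\varphi}$. Rate-level tightness cannot do this: after normalising so that $\mathcal{P}\left(2\varphi\right)=0$, the window sums over $\Psi^{-1}(\id)$ could a priori decay subexponentially (say like $\e^{-\sqrt{n}}$) without changing any of the three pressures, in which case the series converges and recurrence fails. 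The missing idea in the paper is the existence of the $\sigma$-invariant Gibbs measure $\mu$ for the normalised potential $2\varphi$ (Theorem \ref{thm:existence-of-gibbs-measures}, using finite primitivity and $\mathcal{P}\left(2\varphi\right)=0$), whose Gibbs property yields the \emph{uniform} lower bound $\sum_{\omega\in\Sigma^{n}}\e^{2S_{\omega}\varphi}\ge C_{\mu}^{-1}>0$ for all $n$; feeding this into the inequality chain with bounded $(c_{n})$ gives a positive lower bound, uniform in $n$, for the sums over $\omega\in\Psi^{-1}(\id)\cap\Sigma^{*}$ with $2n-N_{n}\le\left|\omega\right|\le2n+N_{n}+l$, and since $\lim_{n}n^{-1}N_{n}=0$ one can select infinitely many pairwise disjoint such windows, whence the series diverges. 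Note also that the tool you cite for this step, the characterisation of recurrent potentials (Proposition \ref{prop:recurrent-fullpressure-symmetric}), is not what supplies the divergence and plays no role in this proposition's proof; once divergence is established, only Theorem \ref{thm:recurrence-implies-amenable} and Theorem \ref{thm:amenable-fullpressure-j} are needed, as you correctly use them.
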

\begin{proof}
We first prove (\ref{enu:weak-delta-half-bound}). Since $\Sigma$
is finitely primitive and $\left(\Sigma\times G,\sigma\rtimes\Psi\right)$
is irreducible, there exists a finite set $B\subset\Psi^{-1}\left(\id\right)\cap\Sigma^{*}$
such that, for all $\omega_{1},\omega_{2}\in\Sigma^{*}$ there is
$\gamma\left(\omega_{1},\omega_{2}\right)\in B$ with $\omega_{1}\gamma\left(\omega_{1},\omega_{2}\right)\omega_{2}\in\Sigma^{*}$.
Define the map  $\Gamma:\Sigma^{*}\times\Sigma^{*}\rightarrow\Sigma^{*}$
given by $\Gamma\left(\omega_{1},\omega_{2}\right):=\omega_{1}\gamma\left(\omega_{1},\omega_{2}\right)\omega_{2}$,
where $\gamma\left(\omega_{1},\omega_{2}\right)\in B$. Note that
the restriction of $\Gamma$ to $\Sigma^{n}\times\Sigma^{*}$ (resp.
$\Sigma^{*}\times\Sigma^{n}$) is at most $\card\left(B\right)$-to-one,
for each $n\in\N$. Setting $C_{B}:=\min\left\{ S_{\gamma}\varphi:\gamma\in B\right\} >-\infty$
and using the bounded distortion property of $\varphi$ with constant
$C_{\varphi}>0$ (see Fact \ref{fact-bounded-distortion-property}),
we have that $S_{\omega_{1}}\varphi+S_{\omega_{2}}\varphi-3C_{\varphi}+C_{B}\le S_{\Gamma\left(\omega_{1},\omega_{2}\right)}\varphi$,
for all $\omega_{1},\omega_{2}\in\Sigma^{*}$. Consequently, setting
$l:=\max\left\{ \left|\gamma\right|:\gamma\in B\right\} $, we obtain
for every sequence $\left(N_{n}\right)\in\N^{\N}$ and $n\in\N$,
\begin{align*}
 & \card\left(B\right)\e^{3C_{\varphi}-C_{B}}\sum_{\omega\in\Sigma^{*}\cap\Psi^{-1}(\id):2n-N_{n}\le\left|\omega\right|\le2n+N_{n}+l}\e^{S_{\omega}\varphi}\\
\ge & \sum_{g\in G}\Bigl(\sum_{\omega_{1}\in\Sigma^{n}\cap\Psi^{-1}(g)}\e^{S_{\omega_{1}}\varphi}\Bigr)\Bigl(\sum_{\omega_{2}\in\Sigma^{*}\cap\Psi^{-1}(g^{-1}):n-N_{n}\le\left|\omega_{2}\right|\le n+N_{n}}\e^{S_{\omega_{2}}\varphi}\Bigr).
\end{align*}
Using that $\varphi$ is asymptotically symmetric with respect to
$\Psi$ with $n_{0}\in\N$ and sequences $\left(c_{n}\right)\in\R^{\N}$
and $\left(N_{n}\right)\in\N^{\N}$ as in Definition \ref{def-asympt-symmetric},
it follows from the previous inequality that for all $n\ge n_{0}$,
\begin{align}
 & \quad\card\left(B\right)\e^{3C_{\varphi}-C_{B}}c_{n}\sum_{\omega\in\Sigma^{*}\cap\Psi^{-1}(\id):2n-N_{n}\le\left|\omega\right|\le2n+N_{n}+l}\e^{S_{\omega}\varphi}\nonumber \\
\ge & \sum_{g\in G}\Bigl(\sum_{\omega_{1}\in\Sigma^{n}\cap\Psi^{-1}(g)}\e^{S_{\omega_{1}}\varphi}\Bigr)\Bigl(\sum_{\omega_{2}\in\Sigma^{n}\cap\Psi^{-1}(g)}\e^{S_{\omega_{2}}\varphi}\Bigr)\ge\sum_{g\in G}\Bigl(\sum_{\omega_{1}\in\Sigma^{n}\cap\Psi^{-1}(g)}\e^{2S_{\omega_{1}}\varphi}\Bigr)=\sum_{\omega\in\Sigma^{n}}\e^{2S_{\omega}\varphi}.\label{eq:poincare-series}
\end{align}
Using that $\lim_{n}(c_{n})^{1/n}=1$ it follows from (\ref{eq:poincare-series})
that 
\[
\limsup_{n\rightarrow\infty}\frac{1}{n}\log\sum_{\omega\in\Sigma^{*}\cap\Psi^{-1}(\id):2n-N_{n}\le\left|\omega\right|\le2n+N_{n}+l}\e^{S_{\omega}\varphi}\ge\limsup_{n\rightarrow\infty}\frac{1}{n}\log\sum_{\omega\in\Sigma^{n}}\e^{2S_{\omega}\varphi}=\mathcal{P}\left(2\varphi\right).
\]
Finally, using that $\lim_{n}n^{-1}N_{n}=0$, one verifies that 
\[
2\mathcal{P}\left(\varphi,\Psi^{-1}\left\{ \id\right\} \cap\Sigma^{*}\right)\ge\limsup_{n\rightarrow\infty}\frac{1}{n}\log\sum_{\omega\in\Sigma^{*}\cap\Psi^{-1}(\id):2n-N_{n}\le\left|\omega\right|\le2n+N_{n}+l}\e^{S_{\omega}\varphi},
\]
which finishes the proof of (\ref{enu:weak-delta-half-bound}).

We now turn to the proof of (\ref{enu:strong-delta-half-bound}).
First note that, by passing to the potential $\varphi-\mathcal{P}\left(2\varphi\right)/2$,
we may without loss of generality assume that $\mathcal{P}\left(2\varphi\right)=0$.
It remains to show that $\mathcal{P}\left(\varphi,\Psi^{-1}\left(\id\right)\cap\Sigma^{*}\right)=0$
if and only if $\mathcal{P}\left(\varphi\right)=0$. Since $\mathcal{P}\left(\varphi,\Psi^{-1}\left(\id\right)\cap\Sigma^{*}\right)\ge\mathcal{P}\left(2\varphi\right)/2=0$
by (\ref{enu:weak-delta-half-bound}), we deduce that $\mathcal{P}\left(\varphi\right)=0$
implies $\mathcal{P}\left(\varphi,\Psi^{-1}\left(\id\right)\cap\Sigma^{*}\right)=0$.
Now, for the opposite implication, suppose that $\mathcal{P}\left(\varphi,\Psi^{-1}\left(\id\right)\cap\Sigma^{*}\right)=0$.
Since $\Sigma$ is finitely primitive and $\mathcal{P}\left(2\varphi\right)=0$
there exists a unique $\sigma$-invariant Gibbs measure $\mu$ for
$2\varphi$ by Theorem \ref{thm:existence-of-gibbs-measures}. By
(\ref{eq:gibbs-equation}) there exists a constant $C_{\mu}>0$ such
that for all $n\in\N$, we have 
\[
\sum_{\omega\in\Sigma^{n}}\e^{2S_{\omega}\varphi}\ge C_{\mu}^{-1}\sum_{\omega\in\Sigma^{n}}\mu\left(\left[\omega\right]\right)=C_{\mu}^{-1}>0.
\]
Since $\varphi$ is symmetric with respect to $\Psi$ and by (\ref{eq:poincare-series}),
there exists $n_{0}\in\N$ and $C>0$, such that for all $n\ge n_{0}\in\N$,
\begin{equation}
\sum_{\omega\in\Sigma^{*}\cap\Psi^{-1}\left(\id\right):2n-N_{n}\le\left|\omega\right|\le2n+N_{n}+l}\e^{S_{\omega}\varphi}\ge C\sum_{\omega\in\Sigma^{n}}\e^{2S_{\omega}\varphi}\ge CC_{\mu}^{-1}>0.\label{eq:delta-half-divergence-type-pre}
\end{equation}
Since $\lim_{n}n^{-1}N_{n}=0$, there exists a sequence $\left(n_{k}\right)\in\N^{\N}$
tending to infinity, such that the sets $\bigl\{2n_{k}-N_{n_{k}},\dots,2n_{k}+N_{n_{k}}+l\bigr\}_{k\in\N}$
are pairwise disjoint. Hence, by (\ref{eq:delta-half-divergence-type-pre}),
we have that 
\[
\sum_{n\in\N}\sum_{\omega\in\Sigma^{n}\cap\Psi^{-1}\left(\id\right)}\e^{S_{\omega}\varphi}=\infty.
\]
Since $\mathcal{P}\left(\varphi,\Psi^{-1}\left(\id\right)\cap\Sigma^{*}\right)=0$,
we have thus shown that $\varphi\circ\pi_{1}$ is a recurrent. Hence,
Theorem \ref{thm:recurrence-implies-amenable} gives that $G$ is
amenable. Finally, it follows from Theorem \ref{thm:amenable-fullpressure-j}
that $0=\mathcal{P}\left(\varphi,\Psi^{-1}\left(\id\right)\cap\Sigma^{*}\right)=\mathcal{P}\left(\varphi\right)$,
which finishes the proof of (\ref{enu:strong-delta-half-bound}). \end{proof}
\begin{cor}
\label{cor:non-amenable-stricthalfbound}Let $\Sigma$ be finitely
primitive and let $\left(\Sigma\times G,\sigma\rtimes\Psi\right)$
be an irreducible  group-extended Markov system.\emph{ }Let $\varphi:\Sigma\rightarrow\R$
be H\"older continuous and symmetric with respect to $\Psi$. If
$G$ is non-amenable, then $2\mathcal{P}\left(\varphi,\Psi^{-1}\left(\id\right)\cap\Sigma^{*}\right)>\mathcal{P}\left(2\varphi\right)$. \end{cor}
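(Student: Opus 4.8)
The plan is to bootstrap the strict inequality from the non-strict lower bound in Proposition \ref{prop:weak-lower-deltahalf-bound}(\ref{enu:weak-delta-half-bound}), the equality characterisation in Proposition \ref{prop:weak-lower-deltahalf-bound}(\ref{enu:strong-delta-half-bound}), and the converse amenability criterion of Theorem \ref{thm:fullpressure-implies-amenable}. No new estimate is needed; the content is purely in assembling these three results and keeping careful track of which pressures are finite.

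First I would record that a potential which is symmetric with respect to $\Psi$ is in particular asymptotically symmetric: if in Definition \ref{def-asympt-symmetric} the sequence $\left(c_{n}\right)$ can be chosen bounded, then after replacing $c_{n}$ by $\max\left\{ c_{n},1\right\}$ (which only weakens the inequality) we retain a bounded sequence satisfying $1\le\left(c_{n}\right)^{1/n}\le\left(\sup_{k}c_{k}\right)^{1/n}\to1$, so $\lim_{n}\left(c_{n}\right)^{1/n}=1$. Hence Proposition \ref{prop:weak-lower-deltahalf-bound}(\ref{enu:weak-delta-half-bound}) applies and yields $2\mathcal{P}\left(\varphi,\Psi^{-1}\left(\id\right)\cap\Sigma^{*}\right)\ge\mathcal{P}\left(2\varphi\right)$. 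It therefore remains only to exclude equality. This can be an issue only when $\mathcal{P}\left(2\varphi\right)<\infty$ — the regime in which the corollary is applied, since for $\mathcal{P}\left(2\varphi\right)=\infty$ both sides of the asserted inequality are infinite — so I would restrict attention to the case $\mathcal{P}\left(2\varphi\right)<\infty$.

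Assume, for contradiction, that $\mathcal{P}\left(2\varphi\right)<\infty$ and $2\mathcal{P}\left(\varphi,\Psi^{-1}\left(\id\right)\cap\Sigma^{*}\right)=\mathcal{P}\left(2\varphi\right)$. Since $\varphi$ is symmetric with respect to $\Psi$ and $\mathcal{P}\left(2\varphi\right)<\infty$, the hypotheses of Proposition \ref{prop:weak-lower-deltahalf-bound}(\ref{enu:strong-delta-half-bound}) are exactly in force, and the assumed equality is equivalent to $2\mathcal{P}\left(\varphi\right)=\mathcal{P}\left(2\varphi\right)$. In particular $\mathcal{P}\left(\varphi\right)=\mathcal{P}\left(2\varphi\right)/2<\infty$, and combining the two displayed equalities gives $\mathcal{P}\left(\varphi,\Psi^{-1}\left(\id\right)\cap\Sigma^{*}\right)=\mathcal{P}\left(2\varphi\right)/2=\mathcal{P}\left(\varphi\right)$.

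Now I would invoke Theorem \ref{thm:fullpressure-implies-amenable}: as $\Sigma$ is finitely primitive, $\left(\Sigma\times G,\sigma\rtimes\Psi\right)$ is irreducible, $\varphi$ is H\"older continuous, and we have shown $\mathcal{P}\left(\varphi,\Psi^{-1}\left(\id\right)\cap\Sigma^{*}\right)=\mathcal{P}\left(\varphi\right)<\infty$, the group $G$ must be amenable. This contradicts the hypothesis that $G$ is non-amenable, so equality is impossible and the strict inequality $2\mathcal{P}\left(\varphi,\Psi^{-1}\left(\id\right)\cap\Sigma^{*}\right)>\mathcal{P}\left(2\varphi\right)$ follows. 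I expect the only genuinely delicate point to be the bookkeeping of finiteness: one must produce $\mathcal{P}\left(\varphi\right)<\infty$ before applying Theorem \ref{thm:fullpressure-implies-amenable}, which is exactly what the chain $\mathcal{P}\left(\varphi\right)=\mathcal{P}\left(2\varphi\right)/2<\infty$ supplies, and one must keep the regime $\mathcal{P}\left(2\varphi\right)=\infty$ out of the equality argument.
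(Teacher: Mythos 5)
Your proof is correct and takes essentially the same route as the paper: assume equality for contradiction, combine Proposition \ref{prop:weak-lower-deltahalf-bound} (\ref{enu:weak-delta-half-bound}) and (\ref{enu:strong-delta-half-bound}) to deduce $2\mathcal{P}\left(\varphi,\Psi^{-1}\left(\id\right)\cap\Sigma^{*}\right)=\mathcal{P}\left(2\varphi\right)=2\mathcal{P}\left(\varphi\right)$, and then invoke Theorem \ref{thm:fullpressure-implies-amenable} to contradict the non-amenability of $G$. Your extra bookkeeping (that symmetry implies asymptotic symmetry, and that $\mathcal{P}\left(2\varphi\right)<\infty$ is needed to apply (\ref{enu:strong-delta-half-bound}) and to obtain $\mathcal{P}\left(\varphi\right)<\infty$ before citing Theorem \ref{thm:fullpressure-implies-amenable}) merely makes explicit what the paper's proof leaves implicit.
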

\begin{proof}
Suppose for a contradiction that the claim is false. Then, by Proposition
\ref{prop:weak-lower-deltahalf-bound} (\ref{enu:weak-delta-half-bound})
and (\ref{enu:strong-delta-half-bound}), we have $2\mathcal{P}\left(\varphi,\Psi^{-1}\left(\id\right)\cap\Sigma^{*}\right)=\mathcal{P}\left(2\varphi\right)=2\mathcal{P}\left(\varphi\right)$.
By Theorem \ref{thm:fullpressure-implies-amenable} we conclude that
$G$ is amenable, which is a contradiction.
\end{proof}

\section{Proof of the main results\label{sec:Proof}}

For a conformal GDMS $\Phi$ associated to $\F_{d}=\langle g_{1},\dots,g_{d}\rangle$
with $d\ge2$, set $I:=\left\{ g_{1},g_{1}^{-1},\dots,g_{d},g_{d}^{-1}\right\} $
and $\Sigma:=\bigl\{\tau\in I^{\N}:\forall i\in\N\,\,\tau_{i}\neq\tau_{i+1}^{-1}\bigr\}$.
One immediately verifies that the Markov shift $\Sigma$ is finitely
primitive. For a non-trivial normal subgroup $N$ of $\F_{d}$, let
$\Psi_{N}:I^{*}\rightarrow\F_{d}/N$ denote the canonical semigroup
homomorphism given by $\Psi_{N}\left(g\right)=g\mbox{ mod }N$, for
each $g\in I$. Using that $d\ge2$ and that $N$ is a non-trivial
normal subgroup of $\F_{d}$, we see that the group-extended Markov
system $\sigma\rtimes\Psi_{N}:\Sigma\times(\F_{d}/N)\rightarrow\Sigma\times(\F_{d}/N)$
is irreducible. We consider $\NwithoutId$ as a subset of $\Sigma^{*}$.
To apply the results of Section \ref{sec:Group-Extended-Markov},
we will frequently make use of the fact that 
\[
\NwithoutId=\Psi_{N}^{-1}\left(\id\right)\cap\Sigma^{*}.
\]

The geometric potential $\zeta$ of $\Phi$ is H\"older continuous
by Fact \ref{fact-geometricpotential-is-hoelder}. Since the Lipschitz
constants of $\Phi$ are bounded away from one, we have that $\sup_{\tau\in\Sigma}\zeta\left(\tau\right)<0$.
Since $\card\left(I\right)<\infty$ we have the following by Fact
\ref{fac:criticalexponents-via-pressure}. 
\begin{fact}
\label{t-characterisation} Let $\psi:\Sigma\rightarrow\R$ be H\"older
continuous and let $t_{N}:\R\rightarrow\R$ denote the free energy
function of $\left(N,\Phi,\psi\right)$. For each $\beta\in\R$, the
function $u\mapsto\mathcal{P}\left(\beta\psi+u\zeta,\NwithoutId\right)$
is strictly decreasing with values in $\R$, and we have $\mathcal{P}\left(\beta\psi+t_{N}\left(\beta\right)\zeta,\NwithoutId\right)=0$. \end{fact}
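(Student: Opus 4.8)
The plan is to deduce all three assertions from Fact~\ref{fac:criticalexponents-via-pressure} by choosing the weight $\Delta:=-\zeta$. Since the Lipschitz constants of the generators of $\Phi$ are bounded by some $s<1$, the geometric potential satisfies $\zeta\le\log s<0$, so $\Delta=-\zeta\ge-\log s=:c>0$ is bounded away from zero. The Markov shift $\Sigma$ is finitely primitive, hence finitely irreducible, and $\card(I)<\infty$. Thus the hypotheses of Fact~\ref{fac:criticalexponents-via-pressure} are met with $\varphi:=\beta\psi$, $\Delta:=-\zeta$ and $\mathcal{C}:=\NwithoutId=\Psi_{N}^{-1}(\id)\cap\Sigma^{*}$.

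First I would identify the induced pressure $\mathcal{P}_{-\zeta}(\beta\psi,\NwithoutId)$ with $t_{N}(\beta)$. By Fact~\ref{fac:criticalexponents-via-pressure} we have
\[
\mathcal{P}_{-\zeta}(\beta\psi,\NwithoutId)=\inf\Bigl\{u\in\R:\sum_{\omega\in\NwithoutId}\e^{S_{\omega}(\beta\psi+u\zeta)}<\infty\Bigr\}.
\]
Because $\psi$ and $\zeta$ are H\"older continuous, the bounded distortion property (Fact~\ref{fact-bounded-distortion-property}) shows that $S_{\omega}(\beta\psi+u\zeta)$ differs from $\beta S_{\omega}\psi+uS_{\omega}\zeta$ by a constant depending only on $\beta$ and $u$; hence the series above converges precisely when $\sum_{\omega\in\NwithoutId}\e^{\beta S_{\omega}\psi+uS_{\omega}\zeta}$ does. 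Adjoining the single term for $\omega=\id$, which equals $1$, does not affect convergence, so this infimum equals $t_{N}(\beta)$.

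Next I would establish that $u\mapsto\mathcal{P}(\beta\psi+u\zeta,\NwithoutId)$ is real-valued and strictly decreasing directly from $\Delta=-\zeta\ge c>0$. Finiteness from above follows from $\mathcal{P}(\beta\psi+u\zeta,\NwithoutId)\le\mathcal{P}(\beta\psi+u\zeta)<\infty$, the latter being finite since the alphabet is finite and the potential bounded; finiteness from below follows since $N$, being a non-trivial subgroup of a torsion-free group, is infinite, so $\NwithoutId\cap\Sigma^{n}\neq\emptyset$ for infinitely many $n$, and each such length contributes a positive term bounded below by $\e^{-n(|\beta|+|u|)M}$, where $M$ bounds $|\psi|$ and $|\zeta|$. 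For strict monotonicity, for $u_{1}<u_{2}$ the bound $S_{\omega}\zeta\le-c|\omega|$ gives $\e^{(u_{2}-u_{1})S_{\omega}\zeta}\le\e^{-c(u_{2}-u_{1})|\omega|}$ term by term, whence $\mathcal{P}(\beta\psi+u_{2}\zeta,\NwithoutId)\le\mathcal{P}(\beta\psi+u_{1}\zeta,\NwithoutId)-c(u_{2}-u_{1})$.

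Finally, since $\card(I)<\infty$, the last assertion of Fact~\ref{fac:criticalexponents-via-pressure} identifies $\mathcal{P}_{-\zeta}(\beta\psi,\NwithoutId)$ as the unique $u\in\R$ with $\mathcal{P}(\beta\psi+u\zeta,\NwithoutId)=0$; combined with the identification $\mathcal{P}_{-\zeta}(\beta\psi,\NwithoutId)=t_{N}(\beta)$ from the second step, this yields $\mathcal{P}(\beta\psi+t_{N}(\beta)\zeta,\NwithoutId)=0$. There is essentially no obstacle here, as the statement is a direct application of the induced-pressure formalism; the only points requiring care are the passage between $S_{\omega}(\beta\psi+u\zeta)$ and $\beta S_{\omega}\psi+uS_{\omega}\zeta$ via bounded distortion and the observation that discarding the empty word leaves the exponent of convergence unchanged.
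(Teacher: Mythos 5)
Your proposal is correct and follows essentially the same route as the paper, which obtains the statement directly from Fact~\ref{fac:criticalexponents-via-pressure} applied with $\varphi=\beta\psi$, $\Delta=-\zeta\ge-\log s>0$ and $\mathcal{C}=\NwithoutId$, using $\card(I)<\infty$. Your additional verifications --- the bounded-distortion comparison of $S_{\omega}\left(\beta\psi+u\zeta\right)$ with $\beta S_{\omega}\psi+uS_{\omega}\zeta$, discarding the empty word, and the elementary proof that the pressure is real-valued and strictly decreasing (the latter relying on $N$ being infinite, which holds since $\F_{d}$ is torsion-free) --- correctly fill in details the paper leaves implicit in its citation.
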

\begin{lem}
\label{lem:tzero-is-positive}We have $\delta_{N}>0$. \end{lem}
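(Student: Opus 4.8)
The plan is to reduce the statement to the positivity of a growth rate and then to exploit that a non-trivial normal subgroup of $\F_{d}$ with $d\ge2$ grows exponentially. First I would observe that $\delta_{N}=t_{N}\left(0\right)$ and apply Fact \ref{t-characterisation} with $\beta=0$ (and, say, $\psi\equiv0$): the map $u\mapsto\mathcal{P}\left(u\zeta,\NwithoutId\right)$ is strictly decreasing on $\R$, takes values in $\R$, and vanishes exactly at $u=\delta_{N}$. Since this function is strictly monotone and vanishes at $\delta_{N}$, we have $\delta_{N}>0$ if and only if $\mathcal{P}\left(0,\NwithoutId\right)>0$, and it suffices to establish this strict inequality.

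Next I would unwind the definition of the induced pressure (with $\Delta=1$) to get
\[
\mathcal{P}\left(0,\NwithoutId\right)=\limsup_{T\rightarrow\infty}\frac{1}{T}\log\card\left\{ \omega\in\NwithoutId:T-\eta<\left|\omega\right|\le T\right\} ,
\]
so that $\mathcal{P}\left(0,\NwithoutId\right)$ is exactly the exponential growth rate of the number of elements of $N$ of a given word length in $\F_{d}$; here I use that every $\omega\in\NwithoutId$ is identified with its reduced word in $\Sigma^{*}$, whose length equals the word length of the corresponding group element. The group-theoretic input I would invoke is that $N$ contains a free subgroup of rank two. This follows since $N$ is free by the Nielsen--Schreier theorem and cannot have rank at most one: a non-trivial subgroup of rank at most one is cyclic, hence finitely generated, hence of finite index by the classical fact recalled in Remark \ref{rem-finitelygenerated-has-finiteindex}, and a finite-index subgroup of $\F_{d}$ has rank $1+\left[\F_{d}:N\right]\left(d-1\right)\ge2$ because $d\ge2$, a contradiction.

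Finally, fixing $a,b\in N$ that generate a free subgroup $F=\left\langle a,b\right\rangle$ of rank two and setting $L:=\max\left\{ \left|a\right|,\left|b\right|\right\}$, I would count: every reduced word of length $n$ in the symbols $a^{\pm1},b^{\pm1}$ represents a distinct element of $F\subset N$ of word length at most $nL$ in $\F_{d}$, and there are at least $4\cdot3^{n-1}$ such words. This gives $\card\left\{ \omega\in N:\left|\omega\right|\le nL\right\}\ge4\cdot3^{n-1}$, and comparing this cumulative count with the annular counts in the $\limsup$ above yields $\mathcal{P}\left(0,\NwithoutId\right)\ge\left(\log3\right)/L>0$, as required.

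The hard part is the positivity of the growth rate, that is, the existence of a rank-two free subgroup of $N$; this is precisely where normality of $N$ and the hypothesis $d\ge2$ are indispensable, since a single non-trivial element would only yield the infinite cyclic group $\left\langle w\right\rangle$ and thus linear, not exponential, growth. The remaining ingredients --- the bounded length distortion between $F$ and $\F_{d}$ and the passage from cumulative to annular word counts in the definition of $\mathcal{P}\left(0,\NwithoutId\right)$ --- are routine.
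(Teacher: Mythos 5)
Your proof is correct, but it takes a genuinely different route from the paper. The paper's proof is a two-line application of its own machinery: the zero potential is asymptotically symmetric with respect to $\Psi_{N}$ (this is where normality enters there, via the length-preserving bijection $\omega\mapsto\omega^{-1}$ between $Ng$ and $Ng^{-1}$), so Proposition \ref{prop:weak-lower-deltahalf-bound} (\ref{enu:weak-delta-half-bound}) gives $2\mathcal{P}\left(0,\NwithoutId\right)\ge\mathcal{P}\left(0\right)=\log\left(2d-1\right)>0$, and then Fact \ref{t-characterisation} converts this into $\delta_{N}=t_{N}\left(0\right)>0$ exactly as in your first paragraph. You replace the symmetry input by an elementary group-theoretic one: $N$ is free by Nielsen--Schreier, cannot be cyclic (your argument via the classical fact in Remark \ref{rem-finitelygenerated-has-finiteindex} and the index--rank formula $1+\left[\F_{d}:N\right]\left(d-1\right)$ is valid and non-circular, since that fact is independent group theory; one could even avoid it by noting directly that no infinite cyclic subgroup is normal in $\F_{d}$), hence contains a rank-two free subgroup, and the resulting exponential growth gives $\mathcal{P}\left(0,\NwithoutId\right)\ge\left(\log3\right)/L>0$. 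Your steps all check out: the reduction via Fact \ref{t-characterisation} is exactly as the paper uses it, the identification of $\mathcal{P}\left(0,\NwithoutId\right)$ with the annular growth rate of $N$ is the definition of induced pressure with $\Delta=\1$, the $4\cdot3^{n-1}$ reduced words in $a^{\pm1},b^{\pm1}$ are distinct elements of $N\setminus\left\{ \id\right\} $ of word length at most $nL$, and the cumulative-to-annular passage is routine (alternatively, Fact \ref{fac:criticalexponents-via-pressure} expresses $\mathcal{P}\left(0,\NwithoutId\right)$ as the abscissa of convergence of $\sum_{\omega\in\NwithoutId}\e^{-u\left|\omega\right|}$, and your count makes this series diverge for $u\le\left(\log3\right)/L$ directly). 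The trade-off: the paper's argument yields the uniform bound $\mathcal{P}\left(0,\NwithoutId\right)\ge\frac{1}{2}\log\left(2d-1\right)$, independent of $N$ and thematically aligned with the $\delta_{N}\ge\delta/2$ results, while your bound $\left(\log3\right)/L$ degenerates as the shortest free pair in $N$ gets long; in exchange, your proof is self-contained and completely bypasses the group-extended Markov system machinery of Section \ref{sec:Group-Extended-Markov}.
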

\begin{proof}
Since $0:\Sigma\rightarrow\left\{ 0\right\} $ is asymptotically symmetric
with respect to $\Psi_{N}$, we have 
\[
2\mathcal{P}\left(0,\NwithoutId\right)\ge\mathcal{P}\left(0\right)=\log\left(2d-1\right)>0
\]
by Proposition \ref{prop:weak-lower-deltahalf-bound} (\ref{enu:weak-delta-half-bound}).
Hence, we have $\delta_{N}=t_{N}\left(0\right)>0$ by Fact \ref{t-characterisation}. 
\end{proof}
We will repeatedly make use of the following fact about the convex
conjugate of a convex function. For the proof we refer to \cite[Theorem 23.5 and Corollary 26.4.1]{rockafellar-convexanalysisMR0274683}.
\begin{fact}
\label{convexconjugate-facts}Let $f:\R\rightarrow\R$ be a convex
function and let $f^{*}:\R\rightarrow\R\cup\left\{ \infty\right\} $
denote the convex conjugate of $f$. 
\begin{enumerate}
\item \label{enu:conjugate-facts-1}Let $\beta\in\R$. If $\alpha\in\partial f\left(\beta\right)$
then $f^{*}\left(\alpha\right)=\alpha\beta-f\left(\beta\right)$. 
\item \label{enu:conjugate-facts-2}$\Int\left\{ x'\in\R:f^{*}\left(x'\right)<\infty\right\} \subset\left\{ x\in\R:\partial f\left(x\right)\neq\emptyset\right\} \subset\left\{ x'\in\R:f^{*}\left(x'\right)<\infty\right\} $.
In particular, if $\alpha\notin\overline{\partial f\left(\R\right)}$
then $f^{*}\left(\alpha\right)=\infty$. 
\end{enumerate}
\end{fact}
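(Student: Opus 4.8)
The plan is to derive both parts from the Fenchel--Young relation and the symmetry of the Legendre--Fenchel transform. For part~(\ref{enu:conjugate-facts-1}) I would argue directly: the condition $\alpha\in\partial f(\beta)$ unwinds to the subgradient inequality $f(y)\ge f(\beta)+\alpha(y-\beta)$ for all $y\in\R$, i.e. $\alpha y-f(y)\le\alpha\beta-f(\beta)$ for all $y$. Taking the supremum over $y$ gives $f^{*}(\alpha)\le\alpha\beta-f(\beta)$, while the choice $y=\beta$ in the definition $f^{*}(\alpha)=\sup_{y}\{\alpha y-f(y)\}$ gives the opposite inequality $f^{*}(\alpha)\ge\alpha\beta-f(\beta)$; together these yield the asserted equality. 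This is the only place where the definition of $f^{*}$ is used directly, and it is exactly the equality case of Fenchel--Young (\cite[Theorem~23.5]{rockafellar-convexanalysisMR0274683}).

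For part~(\ref{enu:conjugate-facts-2}) the key observation is that a finite convex function $f$ on $\R$ is continuous, hence lower semicontinuous and closed, so that $f^{**}=f$ and conjugation is involutive. Under this closedness the subgradient-inversion rule (\cite[Theorem~23.5]{rockafellar-convexanalysisMR0274683}) applies: $\alpha\in\partial f(\beta)$ if and only if $\beta\in\partial f^{*}(\alpha)$. Consequently the range $\partial f(\R)=\bigcup_{\beta\in\R}\partial f(\beta)$ of the subdifferential coincides with $\{\alpha\in\R:\partial f^{*}(\alpha)\neq\emptyset\}$, and it is for this set that I would establish the two inclusions. Now $f^{*}$ is itself a closed proper convex function on $\R$, so applying the standard fact that a proper convex function is subdifferentiable on the relative interior of its domain to $f^{*}$ gives the left inclusion $\Int(\dom f^{*})\subset\{\alpha:\partial f^{*}(\alpha)\neq\emptyset\}=\partial f(\R)$. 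The right inclusion $\partial f(\R)\subset\dom f^{*}$ is then immediate from part~(\ref{enu:conjugate-facts-1}), since any $\alpha\in\partial f(\beta)$ satisfies $f^{*}(\alpha)=\alpha\beta-f(\beta)<\infty$.

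The ``in particular'' clause follows by combining the two inclusions: they show that $\partial f(\R)$ is a nonempty interval squeezed between $\Int(\dom f^{*})$ and $\dom f^{*}$, whence $\partial f(\R)$ and $\dom f^{*}$ have the same closure and $\dom f^{*}\subset\overline{\partial f(\R)}$ (the only subtlety being a one-line case distinction when $\dom f^{*}$ has empty interior, where it reduces to a single point and the claim is trivial). Contraposing, $\alpha\notin\overline{\partial f(\R)}$ forces $\alpha\notin\dom f^{*}$, i.e. $f^{*}(\alpha)=\infty$. I expect the only non-routine ingredient to be the involutivity $f^{**}=f$ together with the inversion rule for subgradients; once these are in place the statement is a formal consequence of part~(\ref{enu:conjugate-facts-1}) and the elementary subdifferentiability of a convex function on the interior of its domain, so the main point is simply to invoke the closedness of real-valued convex functions on $\R$ correctly.
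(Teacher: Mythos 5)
Your proof is correct and is essentially the paper's own: the paper proves this Fact simply by citing Rockafellar's Theorem 23.5 and Corollary 26.4.1, and your argument --- the Fenchel--Young equality case for part (1), and closedness of finite convex functions on $\R$ together with the subgradient inversion rule and subdifferentiability of the proper convex function $f^{*}$ on the interior of its domain for part (2) --- is precisely an unpacking of those two cited results. You also correctly read the middle set in (2) as the range $\partial f\left(\R\right)=\left\{ \alpha\in\R:\partial f^{*}\left(\alpha\right)\neq\emptyset\right\}$, which is what the paper's later applications require, and your one-line case distinction when $\dom f^{*}$ is a single point is sound.
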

The following results about the free energy function are crucial to
derive Theorem \ref{thm:maintheorem-limitset} and \ref{thm:maintheorem}.
\begin{prop}
\label{prop:free-energy-conjugate-mainresults}Let $\Phi$ denote
a conformal GDMS associated to \emph{$\F_{d}$ with} $d\ge2$, and
let $N$ denote a non-trivial normal subgroup of $\F_{d}$. Let $\psi:\Sigma\rightarrow\R$
be H\"older continuous and let $t_{N}:\R\rightarrow\R$ and $t:\R\rightarrow\R$
denote the free energy functions of $\left(N,\Phi,\psi\right)$ and
$\left(\F_{d},\Phi,\psi\right)$ respectively. Then we have the following.
\begin{enumerate}
\item \label{enu:t-mainprop-1}

\begin{enumerate}
\item \label{enu:t-mainprop-1a}We have $t_{N}\left(\beta\right)\le t\left(\beta\right)$
and $-t_{N}^{*}\left(-\alpha\right)\le-t^{*}\left(-\alpha\right)$
for all $\beta,\alpha\in\R$. 
\item \label{enu:t-mainprop-1b}If $t_{N}\left(\beta\right)=t\left(\beta\right)$
for some $\beta\in\R$, then $\F_{d}/N$ is amenable. 
\item \label{enu:t-mainprop-1c}If $-t_{N}^{*}\left(-\alpha\right)=-t^{*}\left(-\alpha\right)$
for some $\alpha\in-\partial t\left(\R\right)$, then $\F_{d}/N$
is amenable.
\item \label{enu:t-mainprop-1d}$\Int\left(\partial t_{N}\left(\R\right)\right)\subset\Int\left(\partial t\left(\R\right)\right)$. 
\end{enumerate}
\item Suppose that $\left(N,\Phi,\psi\right)$ is asymptotically symmetric.
\label{enu:t-mainprop-2}

\begin{enumerate}
\item \label{enu:t-mainprop-2a}If $\F_{d}/N$ is amenable, then $t_{N}\left(\beta\right)=t\left(\beta\right)$
and $-t_{N}^{*}\left(-\alpha\right)=-t^{*}\left(-\alpha\right)$ for
all $\beta,\alpha\in\R$.
\item \label{enu:t-mainprop-2b}We have $2t_{N}\left(\beta\right)\ge t\left(2\beta\right)$
and $-t_{N}^{*}\left(-\alpha\right)\ge-t^{*}\left(-\alpha\right)/2$
for all $\beta,\alpha\in\R$.
\item \label{enu:t-mainprop-2c}If $\left(N,\Phi,\psi\right)$ is symmetric
and $\F_{d}/N$ is non-amenable, then we have $2t_{N}\left(\beta\right)>t\left(2\beta\right)$
and $-t_{N}^{*}\left(-\alpha\right)>-t^{*}\left(-\alpha\right)/2$,
for every $\beta\in\R$ and $\alpha\in-\partial t_{N}\left(\R\right)$. 
\item \label{enu:t-mainprop-2d}$\Int\left(\partial t_{N}\left(\R\right)\right)=\Int\left(\partial t\left(\R\right)\right)$. 
\end{enumerate}
\item Let $\beta\in\R$ and suppose that $\left(N,\Phi,\psi\right)$ is
of divergence type in $\beta$.\label{enu:t-mainprop-3}

\begin{enumerate}
\item \label{enu:t-mainprop-3a}Then $\F_{d}/N$ is amenable.
\item \label{enu:t-mainprop-3b}$t_{N}\left(\beta\right)=t\left(\beta\right)$
if and only if $\left(N,\Phi,\psi\right)$ is symmetric on average
in $\beta$. 
\item \label{enu:t-mainprop-3c}If $\alpha\in-\partial t(\beta)$ and $-t_{N}^{*}\left(-\alpha\right)=-t^{*}\left(-\alpha\right)$,
then $\left(N,\Phi,\psi\right)$ is symmetric on average in $\beta$. 
\item \label{enu:t-mainprop-3d}If $\left(N,\Phi,\psi\right)$ is symmetric
on average in $\beta$, then $-t_{N}^{*}\left(-\alpha\right)=-t^{*}\left(-\alpha\right)$
for $\alpha\in-\partial t_{N}(\beta)$. 
\end{enumerate}
\end{enumerate}
\end{prop}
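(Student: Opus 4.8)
The plan is to reduce every assertion about the free energy functions $t_N$ and $t$ to a comparison between the induced pressure and the full pressure of a single Hölder potential on the group-extended system $\left(\Sigma\times\F_{d}/N,\sigma\rtimes\Psi_{N}\right)$, and then quote the results of Section \ref{sec:Group-Extended-Markov}. The bridge is Fact \ref{t-characterisation} together with $\NwithoutId=\Psi_{N}^{-1}\left(\id\right)\cap\Sigma^{*}$. For fixed $\beta$ set $\varphi_{\beta}:=\beta\psi+t_{N}\left(\beta\right)\zeta$. Then $\mathcal{P}\left(\varphi_{\beta},\NwithoutId\right)=0$ by Fact \ref{t-characterisation}, while $t\left(\beta\right)$ is the unique zero of the strictly decreasing map $u\mapsto\mathcal{P}\left(\beta\psi+u\zeta\right)$ (apply Fact \ref{t-characterisation} with $N=\F_{d}$, where $\NwithoutId$ becomes $\Sigma^{*}$). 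Since $\NwithoutId\subset\Sigma^{*}$ gives $\mathcal{P}\left(\varphi_{\beta}\right)\ge\mathcal{P}\left(\varphi_{\beta},\NwithoutId\right)=0$, strict monotonicity yields the recurring equivalence $t_{N}\left(\beta\right)=t\left(\beta\right)\Longleftrightarrow\mathcal{P}\left(\varphi_{\beta}\right)=0\Longleftrightarrow\mathcal{P}\left(\varphi_{\beta}\right)=\mathcal{P}\left(\varphi_{\beta},\NwithoutId\right)<\infty$. This identity, read against the theorems of Section \ref{sec:Group-Extended-Markov}, is the whole engine.

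First I would record the \emph{dictionary} that the three symmetry notions for $\left(N,\Phi,\psi\right)$ from the introduction coincide with the corresponding notions of Definition \ref{def-asympt-symmetric} for the potential $\varphi_{\beta}$ relative to $\Psi_{N}$, via $\Sigma^{n}\cap\Psi_{N}^{-1}\left(g\right)=\left\{ \omega\in Ng:\left|\omega\right|=n\right\} $; here one checks that the period $p$ agrees (since $\omega_{n}\omega_{1}\in\Sigma^{2}$ iff $\omega_{n}\omega_{1}\neq\id$ in $\F_{d}$) and that the normalising factors $\e^{-kp\mathcal{P}\left(\varphi_{\beta},\NwithoutId\right)}$ in the symmetric-on-average condition are trivial because $\mathcal{P}\left(\varphi_{\beta},\NwithoutId\right)=0$. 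With this in hand the $t_{N}$-versus-$t$ statements follow mechanically: (\ref{enu:t-mainprop-1a}) from $\NwithoutId\subset\Sigma^{*}$ and strict monotonicity; (\ref{enu:t-mainprop-1b}) from Theorem \ref{thm:fullpressure-implies-amenable} applied to $\varphi_{\beta}$ using $\mathcal{P}\left(\varphi_{\beta}\right)=0=\mathcal{P}\left(\varphi_{\beta},\NwithoutId\right)$; (\ref{enu:t-mainprop-2a}) from Theorem \ref{thm:amenable-fullpressure-j}; (\ref{enu:t-mainprop-2b}) from Proposition \ref{prop:weak-lower-deltahalf-bound} (\ref{enu:weak-delta-half-bound}) read as $0=2\mathcal{P}\left(\varphi_{\beta},\NwithoutId\right)\ge\mathcal{P}\left(2\varphi_{\beta}\right)$, whence $2t_{N}\left(\beta\right)\ge t\left(2\beta\right)$; (\ref{enu:t-mainprop-2c}) identically from Corollary \ref{cor:non-amenable-stricthalfbound}; and (\ref{enu:t-mainprop-3a}), (\ref{enu:t-mainprop-3b}) from the observation that divergence type in $\beta$ is exactly recurrence of $\varphi_{\beta}\circ\pi_{1}$, so Theorem \ref{thm:recurrence-implies-amenable} and Proposition \ref{prop:recurrent-fullpressure-symmetric} apply.

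The conjugate statements I would then deduce by pure convex analysis from the $t_{N}$-versus-$t$ statements using Fact \ref{convexconjugate-facts}. The inequalities are direct: $t_{N}\le t$ gives $t_{N}^{*}\ge t^{*}$, hence (\ref{enu:t-mainprop-1a}); the pointwise bound $2t_{N}\left(\beta\right)\ge t\left(2\beta\right)$ gives, after $\beta\mapsto\beta/2$ in the supremum defining $t_{N}^{*}$, that $t_{N}^{*}\le\tfrac{1}{2}t^{*}$, yielding (\ref{enu:t-mainprop-2b}) and, together with $t_{N}^{*}\ge t^{*}$, the equality of domains (\ref{enu:t-mainprop-2d}) and the inclusion (\ref{enu:t-mainprop-1d}) via Fact \ref{convexconjugate-facts} (\ref{enu:conjugate-facts-2}). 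For (\ref{enu:t-mainprop-1c}) and (\ref{enu:t-mainprop-3c}) I would use that $-\alpha\in\partial t\left(\beta\right)$ forces $-t^{*}\left(-\alpha\right)=\alpha\beta+t\left(\beta\right)$ by Fact \ref{convexconjugate-facts} (\ref{enu:conjugate-facts-1}), so the conjugate equality collapses to $t_{N}\left(\beta\right)=t\left(\beta\right)$ and I reduce to (\ref{enu:t-mainprop-1b}), resp. (\ref{enu:t-mainprop-3b}); for (\ref{enu:t-mainprop-3d}) the crucial point is that, since $t_{N}\le t$ with $t_{N}\left(\beta\right)=t\left(\beta\right)$, any subgradient $-\alpha\in\partial t_{N}\left(\beta\right)$ is automatically a subgradient $-\alpha\in\partial t\left(\beta\right)$, so both conjugates are given by the same affine formula. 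The strict inequalities in (\ref{enu:t-mainprop-2c}) likewise propagate the strict scalar bound through the Legendre transform at the exposed point: for $-\alpha\in\partial t_{N}\left(\beta_{0}\right)$ one has $t^{*}\left(-\alpha\right)\ge-2\alpha\beta_{0}-t\left(2\beta_{0}\right)>-2\alpha\beta_{0}-2t_{N}\left(\beta_{0}\right)=2t_{N}^{*}\left(-\alpha\right)$.

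The main obstacle I anticipate is bookkeeping rather than conceptual: pinning down the dictionary between the introduction's coset-indexed symmetry conditions and the $\Psi_{N}$-indexed conditions of Definition \ref{def-asympt-symmetric} exactly, in particular matching the period $p$ and confirming that evaluating at $u=t_{N}\left(\beta\right)$ makes the induced pressure vanish so that Definition \ref{def-asympt-symmetric} specialises as required. Once that identification is fixed, each item is a one-line application of a Section \ref{sec:Group-Extended-Markov} result combined with the convex-conjugate facts; the only place genuine care is needed is ensuring the hypotheses $\alpha\in-\partial t_{N}\left(\R\right)$ and $\alpha\in-\partial t\left(\beta\right)$ are used precisely where the Legendre transform is evaluated, so that the strict and the equality statements transfer correctly.
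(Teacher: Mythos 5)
Your proposal is correct and follows essentially the same route as the paper: it reduces every item to the identity $\mathcal{P}\left(\beta\psi+t_{N}\left(\beta\right)\zeta,N\setminus\left\{ \id\right\} \right)=0$ from Fact \ref{t-characterisation}, invokes Theorems \ref{thm:amenable-fullpressure-j}, \ref{thm:fullpressure-implies-amenable} and \ref{thm:recurrence-implies-amenable}, Propositions \ref{prop:weak-lower-deltahalf-bound} and \ref{prop:recurrent-fullpressure-symmetric} and Corollary \ref{cor:non-amenable-stricthalfbound} through the identification of divergence type in $\beta$ with recurrence, and transfers everything to the conjugates via Fact \ref{convexconjugate-facts}, exactly as the paper does. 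The only deviations are cosmetic: you evaluate the half-bound at $\beta\psi+t_{N}\left(\beta\right)\zeta$ where the paper uses $\beta\psi+\left(t\left(2\beta\right)/2\right)\zeta$ (equivalent by the strict monotonicity in Fact \ref{t-characterisation}), and in (\ref{enu:t-mainprop-3d}) you observe that $t_{N}\le t$ together with $t_{N}\left(\beta\right)=t\left(\beta\right)$ transfers subgradients, where the paper instead combines the affine formula with (\ref{enu:t-mainprop-1a}).
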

\begin{proof}
The first assertion in (\ref{enu:t-mainprop-1a}) follows from Fact
\ref{t-characterisation}, since we have 
\[
\mathcal{P}\left(\beta\psi+t\left(\beta\right)\zeta,\NwithoutId\right)\le\mathcal{P}\left(\beta\psi+t\left(\beta\right)\zeta\right)=0,\mbox{ for }\beta\in\R.
\]
By the definition of the convex conjugate, we then have for $\alpha\in\R$,
\[
-t_{N}^{*}\left(-\alpha\right)=\inf_{\beta\in\R}\left\{ t_{N}\left(\beta\right)+\beta\alpha\right\} \le\inf_{\beta\in\R}\left\{ t\left(\beta\right)+\beta\alpha\right\} =-t^{*}\left(-\alpha\right).
\]
To prove (\ref{enu:t-mainprop-1b}), suppose that $t_{N}\left(\beta\right)=t\left(\beta\right)$
for some $\beta\in\R$. Then by Fact \ref{t-characterisation} we
have that \linebreak  $\mathcal{P}\left(\beta\psi+t_{N}\left(\beta\right)\zeta,\NwithoutId\right)=\mathcal{P}\left(\beta\psi+t\left(\beta\right)\zeta\right)=0$.
Applying Theorem \ref{thm:fullpressure-implies-amenable} to the H\"older
continuous  potential $\beta\psi+t_{N}\left(\beta\right)\zeta:\Sigma\rightarrow\R$
and the group-extended Markov system $\left(\Sigma\times(F_{d}/N),\sigma\rtimes\Psi_{N}\right)$
gives  that $\F_{d}/N$ is amenable. For the proof of (\ref{enu:t-mainprop-1c}),
let $\alpha\in-\partial t\left(\beta\right)$ for some $\beta\in\R$.
By Fact \ref{convexconjugate-facts} (\ref{enu:conjugate-facts-1}),
the first inequality in (\ref{enu:t-mainprop-1a}) and the definition
of the convex conjugate, we have 
\[
-t^{*}\left(-\alpha\right)=t(\beta)+\beta\alpha\ge t_{N}\left(\beta\right)+\beta\alpha\ge-t_{N}^{*}\left(-\alpha\right).
\]
Consequently, if $-t_{N}^{*}\left(-\alpha\right)=-t^{*}\left(-\alpha\right)$,
then we have $t_{N}\left(\beta\right)=t(\beta)$ and amenability of
$\F_{d}/N$ follows from (\ref{enu:t-mainprop-1b}). To prove (\ref{enu:t-mainprop-1d}),
let $\alpha\in\Int\left(\partial t_{N}\left(\R\right)\right)$. By
Fact \ref{convexconjugate-facts} (\ref{enu:conjugate-facts-2}) we
have $t_{N}^{*}\left(\alpha\right)<\infty$. By (\ref{enu:t-mainprop-1a})
we have $t^{*}\left(\alpha\right)\le t_{N}^{*}\left(\alpha\right)<\infty$.
By Fact \ref{convexconjugate-facts} (\ref{enu:conjugate-facts-2})
again, we conclude that $\alpha\in\overline{\partial t\left(\R\right)}$.
Since $\alpha$ is an interior point of $\partial t_{N}\left(\R\right)$,
we have thus shown that $\Int\left(\partial t_{N}\left(\R\right)\right)\subset\Int\left(\partial t\left(\R\right)\right)$. 

Now suppose that $\left(N,\Phi,\psi\right)$ is asymptotically symmetric,
that is, $\beta\psi+u\zeta$ is asymptotically symmetric with respect
to $\Psi_{N}$, for all $\beta,u\in\R$. To prove (\ref{enu:t-mainprop-2a}),
suppose that $\F_{d}/N$ is amenable. By applying Theorem \ref{thm:amenable-fullpressure-j}
to the H\"older continuous  potential $\beta\psi+t\left(\beta\right)\zeta$
and the group-extended Markov system $\left(\Sigma\times(F_{d}/N),\sigma\rtimes\Psi_{N}\right)$,
we obtain by Fact \ref{t-characterisation} that $\mathcal{P}\left(\beta\psi+t\left(\beta\right)\zeta,\NwithoutId\right)=\mathcal{P}\left(\beta\psi+t\left(\beta\right)\zeta\right)=0$
for $\beta\in\R$. Consequently, we have $t_{N}\left(\beta\right)=t\left(\beta\right)$
for all $\beta\in\R$ by Fact \ref{t-characterisation} and thus,
$-t_{N}^{*}\left(-\alpha\right)=-t^{*}\left(-\alpha\right)$ for all
$\alpha\in\R$. To prove (\ref{enu:t-mainprop-2b}) let $\beta\in\R$.
\foreignlanguage{english}{Applying Proposition \ref{prop:weak-lower-deltahalf-bound}
(}\ref{enu:weak-delta-half-bound}\foreignlanguage{english}{) to the
asymptotically symmetric }H\"older continuous  potential\foreignlanguage{english}{
$\beta\psi+\left(t\left(2\beta\right)/2\right)\zeta$ gives that 
\begin{equation}
2\mathcal{P}\left(\beta\psi+\left(t\left(2\beta\right)/2\right)\zeta,\NwithoutId\right)\ge\mathcal{P}\left(2\beta\psi+t\left(2\beta\right)\zeta\right)=0.\label{eq:mainproof-lowerbound}
\end{equation}
Hence, we have $t_{N}\left(\beta\right)\ge t\left(2\beta\right)/2$
}by Fact \ref{t-characterisation}\foreignlanguage{english}{ and we
obtain for $\alpha\in\R$, 
\[
-t_{N}^{*}\left(-\alpha\right)=\inf_{\beta\in\R}\left\{ t_{N}\left(\beta\right)+\beta\alpha\right\} \ge\frac{1}{2}\inf_{\beta\in\R}\left\{ t\left(2\beta\right)+2\beta\alpha\right\} =-t^{*}\left(-\alpha\right)/2,
\]
which finishes the proof of (}\ref{enu:t-mainprop-2b}\foreignlanguage{english}{).
To prove (}\ref{enu:t-mainprop-2c}\foreignlanguage{english}{) suppose
that $\left(N,\Phi,\psi\right)$ is symmetric and that} $\F_{d}/N$
is non-amenable. Then the inequality in (\ref{eq:mainproof-lowerbound})
is strict by Corollary \ref{cor:non-amenable-stricthalfbound}. Hence,
\foreignlanguage{english}{$t_{N}\left(\beta\right)>t\left(2\beta\right)/2$
for every $\beta\in\R$. Moreover, if $\alpha\in-\partial t_{N}\left(\beta\right)$
for some $\beta\in\R$, then }by using Fact \ref{convexconjugate-facts}
(\ref{enu:conjugate-facts-1}) we deduce that \foreignlanguage{english}{
\[
-t_{N}^{*}\left(-\alpha\right)=t_{N}\left(\beta\right)+\beta\alpha>\left(t\left(2\beta\right)+2\beta\alpha\right)\big/2\ge-t^{*}\left(-\alpha\right)/2.
\]
To prove (}\ref{enu:t-mainprop-2d}\foreignlanguage{english}{), it
suffices to show that $\partial t\left(\R\right)\subset\overline{\partial t_{N}\left(\R\right)}$.
Then $\Int\left(\partial t_{N}\left(\R\right)\right)=\Int\left(\partial t\left(\R\right)\right)$
follows by combining with (\ref{enu:t-mainprop-1d}). Let $\alpha\in-\partial t\left(\beta\right)$
for some $\beta\in\R$. Then by (}\ref{enu:t-mainprop-2b}\foreignlanguage{english}{)
and} Fact \ref{convexconjugate-facts} (\ref{enu:conjugate-facts-1})\foreignlanguage{english}{
we have 
\[
-t_{N}^{*}\left(-\alpha\right)\ge-t^{*}\left(-\alpha\right)\big/2=\left(t(\beta)+\beta\alpha\right)\big/2>-\infty,
\]
which shows that $\alpha\in-\overline{\partial t_{N}\left(\R\right)}$
by }Fact \ref{convexconjugate-facts} (\ref{enu:conjugate-facts-2}).

To prove (\ref{enu:t-mainprop-3}) let $\beta\in\R$ and suppose that
$\left(N,\Phi,\psi\right)$ is of divergence type\foreignlanguage{english}{
in $\beta$, that is, }$\left(\beta\psi+t_{N}\left(\beta\right)\zeta\right)\circ\pi_{1}$
is recurrent with respect to $\sigma\rtimes\Psi_{N}$\foreignlanguage{english}{.
Then amenability of $\F_{d}/N$ follows from Theorem \ref{thm:recurrence-implies-amenable},
which proves (}\ref{enu:t-mainprop-3a}\foreignlanguage{english}{).
To prove (}\ref{enu:t-mainprop-3b}\foreignlanguage{english}{), first
observe that $\left(N,\Phi,\psi\right)$ is symmetric on average in
$\beta$ if and only if $\beta\psi+t\left(\beta\right)\zeta$ is symmetric
on average with respect to $\Psi_{N}$. Now, the equivalence in (}\ref{enu:t-mainprop-3b}\foreignlanguage{english}{)
follows from Proposition \ref{prop:recurrent-fullpressure-symmetric}
and Fact \ref{t-characterisation}. To prove (}\ref{enu:t-mainprop-3c}\foreignlanguage{english}{),
suppose that $-t_{N}^{*}\left(-\alpha\right)=-t^{*}\left(-\alpha\right)$
for $\alpha\in-\partial t(\beta)$. As in the proof of (}\ref{enu:t-mainprop-1c}\foreignlanguage{english}{)
we deduce that $t_{N}\left(\beta\right)=t(\beta)$, which then implies
that} $\left(N,\Phi,\psi\right)$ is symmetric on average in $\beta$
by (\ref{enu:t-mainprop-3b}). In order to prove (\ref{enu:t-mainprop-3d}),
suppose that $\left(N,\Phi,\psi\right)$ is symmetric on average in
$\beta$ and let $\alpha\in-\partial t_{N}(\beta)$. By (\ref{enu:t-mainprop-3b})
we then have $t_{N}\left(\beta\right)=t(\beta)$. Hence, we obtain
by Fact \ref{convexconjugate-facts} (\ref{enu:conjugate-facts-1})\foreignlanguage{english}{
}that 
\[
-t_{N}^{*}\left(-\alpha\right)=t_{N}(\beta)+\beta\alpha=t\left(\beta\right)+\beta\alpha\ge-t^{*}\left(-\alpha\right).
\]
Combining with (\ref{enu:t-mainprop-1a}) finishes the proof of (\ref{enu:t-mainprop-3d})
and completes the proof of the proposition. 
\end{proof}
\selectlanguage{english}%
We are now in the position to prove the main theorems. 
\selectlanguage{british}%
\begin{proof}
[Proof of Theorem $\ref{thm:maintheorem-limitset}$ ]The first assertion
follows from Proposition \ref{prop:t-vs-dimension} (\ref{enu:t0-dimension-limitset})
and Lemma \ref{lem:tzero-is-positive}. To apply Proposition \ref{prop:free-energy-conjugate-mainresults}
in what follows, recall that $\delta_{N}=t_{N}\left(0\right)$ and
$\delta=t\left(0\right)$, where $t_{N}:\R\rightarrow\R$ and $t:\R\rightarrow\R$
denote the free energy functions of $\left(N,\Phi,0\right)$ and $\left(\F_{d},\Phi,0\right)$
respectively. To prove (\ref{enu:maintheorem-limitset-nonamenable-implies-gap-1}),
suppose that $\F_{d}/N$ is non-amenable. By (\ref{enu:t-mainprop-1a})
and (\ref{enu:t-mainprop-1b}) of Proposition \ref{prop:free-energy-conjugate-mainresults},
we have $\delta_{N}<\delta$. Combining with (\ref{enu:maintheorem-0}),
finishes the proof of (\ref{enu:maintheorem-limitset-nonamenable-implies-gap-1}). 

For the proof of (\ref{enu:maintheorem-limitset-2}), suppose that
$\left(N,\Phi,0\right)$ is asymptotically symmetric. If $\F_{d}/N$
is amenable, then we have $\delta_{N}=\delta$ by Proposition \ref{prop:free-energy-conjugate-mainresults}
(\ref{enu:t-mainprop-2a}). Now suppose that $\F_{d}/N$ is non-amenable.
Using Proposition \ref{prop:free-energy-conjugate-mainresults} (\ref{enu:t-mainprop-2b})
we obtain that $\delta_{N}\ge\delta/2$, and if $\left(N,\Phi,0\right)$
is symmetric, then we have $\delta_{N}>\delta/2$ by Proposition \ref{prop:free-energy-conjugate-mainresults}
(\ref{enu:t-mainprop-2c}). In light of (\ref{enu:maintheorem-0})
the proof of (\ref{enu:maintheorem-limitset-2}) is complete. 

To prove (\ref{enu:maintheorem-limitset-3}), suppose that $\left(N,\Phi\right)$
is of divergence type, that is, $\left(N,\Phi,0\right)$ is of divergence
type in $0$. Then the assertion in (\ref{enu:maintheorem-limitset-divtype-amenable})
follows from Proposition \ref{prop:free-energy-conjugate-mainresults}
(\ref{enu:t-mainprop-3a}) and the assertion in (\ref{enu:maintheorem-limitset-divtype-full-symmaverage})
follows from Proposition \ref{prop:free-energy-conjugate-mainresults}
(\ref{enu:t-mainprop-3b}) for $\beta=0$. The proof is complete.
\end{proof}

\begin{proof}
[Proof of Theorem $\ref{thm:maintheorem}$ ]The first assertion follows
from Proposition \ref{prop:t-vs-dimension} (\ref{enu:multifractal-formalism})
and Proposition \ref{prop:free-energy-conjugate-mainresults} (\ref{enu:t-mainprop-1d})
and (\ref{enu:t-mainprop-2d}). To prove (\ref{enu:maintheorem-nonamenable-implies-gap}),
suppose that $\F_{d}/N$ is non-amenable and let $\alpha\in\left(\alpha_{-},\alpha_{+}\right)$.
Since $\left(\alpha_{-},\alpha_{+}\right)=-\Int\left(\partial t\left(\R\right)\right)$
we have $-t_{N}^{*}\left(-\alpha\right)<-t^{*}\left(-\alpha\right)$
by (\ref{enu:t-mainprop-1a}) and (\ref{enu:t-mainprop-1c}) of Proposition
\ref{prop:free-energy-conjugate-mainresults} . Further, by Proposition
\ref{prop:t-vs-dimension} (\ref{enu:multifractal-formalism}), we
have $\dim_{H}\left(\pi_{\Phi}\left(\mathcal{F}\left(\alpha,\Phi,\psi\right)\right)\right)=-t^{*}\left(-\alpha\right)>0$.
Then, by Proposition \ref{prop:t-vs-dimension} (\ref{enu:multifractal-upperbound}),
we obtain that 
\[
\dim_{H}\left(\pi_{\Phi}\left(\Lambda_{\mathrm{r}}\left(N\right)\cap\mathcal{F}\left(\alpha,\Phi,\psi\right)\right)\right)\le\max\left\{ -t_{N}^{*}\left(-\alpha\right),0\right\} <-t^{*}\left(-\alpha\right)=\dim_{H}\left(\pi_{\Phi}\left(\mathcal{F}\left(\alpha,\Phi,\psi\right)\right)\right),
\]
which gives the desired inequality in (\ref{enu:maintheorem-nonamenable-implies-gap}).

Now suppose that $\left(N,\Phi,\psi\right)$ is asymptotically symmetric
and let $\alpha\in\left(\alpha_{-},\alpha_{+}\right)$. By Proposition
\ref{prop:free-energy-conjugate-mainresults} (\ref{enu:t-mainprop-2d}),
we have  $\Int\left(\partial t_{N}\left(\R\right)\right)=\Int\left(\partial t\left(\R\right)\right)$.
Hence, we have $\dim_{H}\left(\pi_{\Phi}\left(\Lambda_{\mathrm{r}}\left(N\right)\cap\mathcal{F}\left(\alpha,\Phi,\psi\right)\right)\right)=-t_{N}^{*}\left(-\alpha\right)$
and $\dim_{H}\left(\pi_{\Phi}\left(\mathcal{F}\left(\alpha,\Phi,\psi\right)\right)\right)=-t^{*}\left(-\alpha\right)$
by Proposition \ref{prop:t-vs-dimension} (\ref{enu:multifractal-formalism}).
The assertion in (\ref{enu:maintheorem-amenablesymmetric-fulldimension})
is then a consequence of Proposition \ref{prop:free-energy-conjugate-mainresults}
(\ref{enu:t-mainprop-2a}). The lower bound in (\ref{enu:maintheorem-symmetriclowerbound})
is deduced  from Proposition \ref{prop:free-energy-conjugate-mainresults}
(\ref{enu:t-mainprop-2b}). Finally, if $\left(N,\Phi,\psi\right)$
is symmetric, then the strict inequality in (\ref{enu:maintheorem-symmetriclowerbound})
follows from Proposition \ref{prop:free-energy-conjugate-mainresults}
(\ref{enu:t-mainprop-2c}). 

Let us now turn to the proof of (\ref{enu:maintheorem-3}). The assertion
in (\ref{enu:maintheorem-limitset-divtype-amenable}) follows from
Proposition \ref{prop:free-energy-conjugate-mainresults} (\ref{enu:t-mainprop-3a}).
To prove (\ref{enu:maintheorem-divtype-full-implies-symmaverage}),
let $\alpha\in-\partial t(\beta)$ and suppose that $\dim_{H}\left(\pi_{\Phi}\left(\Lambda_{\mathrm{r}}\left(N\right)\cap\mathcal{F}\left(\alpha,\Phi,\psi\right)\right)\right)=\dim_{H}\left(\pi_{\Phi}\left(\mathcal{F}\left(\alpha,\Phi,\psi\right)\right)\right)$.
By Proposition \ref{prop:t-vs-dimension} (\ref{enu:multifractal-upperbound})
and (\ref{enu:multifractal-formalism}) we then have that 
\begin{align*}
0<-t^{*}\left(-\alpha\right) & =\dim_{H}\left(\pi_{\Phi}\left(\mathcal{F}\left(\alpha,\Phi,\psi\right)\right)\right)=\dim_{H}\left(\pi_{\Phi}\left(\Lambda_{\mathrm{r}}\left(N\right)\cap\mathcal{F}\left(\alpha,\Phi,\psi\right)\right)\right)\\
 & \le\max\left\{ -t_{N}^{*}\left(-\alpha\right),0\right\} .
\end{align*}
Hence, we have $-t_{N}^{*}\left(-\alpha\right)=-t^{*}\left(-\alpha\right)$
and Proposition \ref{prop:free-energy-conjugate-mainresults} (\ref{enu:t-mainprop-3c})
gives that $\left(N,\Phi,\psi\right)$ is symmetric on average in
$\beta$. Finally, to prove (\ref{enu:maintheorem-divtype-symmaverage-implies-full}),
let $\alpha\in-\left(\partial t_{N}\left(\beta\right)\cap\Int\left(\partial t_{N}\left(\R\right)\right)\right)$.
Suppose that $\left(N,\Phi,\psi\right)$ is symmetric on average in
$\beta$. Then we have $-t_{N}^{*}\left(-\alpha\right)=-t^{*}\left(-\alpha\right)$
by Proposition \ref{prop:free-energy-conjugate-mainresults} (\ref{enu:t-mainprop-3d}).
Since $-t^{*}\left(-\alpha\right)=\dim_{H}\left(\pi_{\Phi}\left(\mathcal{F}\left(\alpha,\Phi,\psi\right)\right)\right)$
and $-t_{N}^{*}\left(-\alpha\right)=\dim_{H}\left(\pi_{\Phi}\left(\Lambda_{\mathrm{r}}\left(N\right)\cap\mathcal{F}\left(\alpha,\Phi,\psi\right)\right)\right)$
by Proposition \ref{prop:t-vs-dimension} (\ref{enu:multifractal-formalism})
and Proposition \ref{prop:free-energy-conjugate-mainresults} (\ref{enu:t-mainprop-1d}),
the proof is complete. 
\end{proof}
\providecommand{\bysame}{\leavevmode\hbox to3em{\hrulefill}\thinspace}
\providecommand{\MR}{\relax\ifhmode\unskip\space\fi MR }
% \MRhref is called by the amsart/book/proc definition of \MR.
\providecommand{\MRhref}[2]{%
  \href{http://www.ams.org/mathscinet-getitem?mr=#1}{#2}
}
\providecommand{\href}[2]{#2}

\end{document}